\documentclass[12pt,a4paper,reqno]{amsart}
\usepackage[utf8]{inputenc}
\usepackage{enumerate}
\usepackage[margin=2.5cm]{geometry}
\usepackage{graphicx} 
\usepackage{overpic}
\usepackage{caption}
\usepackage{subcaption}
\usepackage{xcolor}
\usepackage{comment}
\usepackage{hyperref}
\usepackage{amsthm}
\usepackage{placeins}
\usepackage{float}
\usepackage{enumerate}

\usepackage{lineno} 

\newtheorem{lema}{Lemma}[section]
\newtheorem{tho}[lema]{Theorem}
\newtheorem{pro}[lema]{Proposition}
\newtheorem{remark}[lema]{Remark}

\newtheorem{coro}[lema]{Corollary}

\newtheorem{definition}[lema]{Definition}
\DeclareMathOperator{\ee}{e}

\title[Global analysis of a resource-consumer model] {Global dynamics and regime shifts in a resource-consumer model with facilitation and habitat loss}

\author[T. Mayayo]{Teodoro Mayayo}
\address{Departament de Matem\`{a}tiques, Universitat Aut\`{o}noma de Barcelona, 08193 Be\-lla\-ter\-ra, Barcelona (Spain)}
\email{teodoro.mayayo@uab.cat}

\author[J. Sardany\'{e}s]{Josep Sardany\'{e}s}
\address{Centre de Recerca Matem\`{a}tica, Campus de Be\-lla\-ter\-ra, 08193 Bellaterra, Barcelona (Spain)}
\email{jsardanyes@crm.cat}

\author[J. Torregrosa]{Joan Torregrosa}
\address{Departament de Matem\`{a}tiques, Universitat Aut\`{o}noma de Barcelona, 08193 Be\-lla\-ter\-ra, Barcelona (Spain); Centre de Recerca Matem\`{a}tica, Campus de Be\-lla\-ter\-ra, 08193 Bellaterra, Barcelona (Spain)}
\email{joan.torregrosa@uab.cat}

\makeatletter

\begin{document}

\begin{abstract} 
Modelling how populations respond to habitat loss is crucial for understanding ecosystem stability, especially when positive interactions among resource species, such as plant-plant facilitation, play a key role. Habitat loss not only reduces available organic nutrients and space for primary producers but also disrupts the positive feedbacks that sustain resource populations, thereby affecting consumer persistence and the overall system's stability. We analyse a cubic planar model describing resource-consumer dynamics with facilitation under progressive habitat loss. Our study characterizes the parameter space and enumerates all the phase portraits within the Poincar\'e disk under ecologically relevant conditions. We show that the system has a unique stable limit cycle and characterize analytically the heteroclinic bifurcation curve involving the collapse of the resource and the consumer, enabling us to determine how the parameter region sustaining coexistence oscillations narrows under habitat destruction. To further explore these dynamics, we construct a piecewise-linear (PWL) approximation that preserves the system's qualitative behaviour, allowing us to obtain an explicit expression for the heteroclinic bifurcation. Finally, we investigate how extrinsic noise affecting the resource species impacts the overall dynamics, showing that stochasticity can anticipate the onset of the heteroclinic bifurcation causing earlier co-extinctions.
\end{abstract}

\maketitle

\section{Introduction}
Ecosystems operate through complex, nonlinear interactions, e.g., competition, predation, and/or facilitation, that can sometimes drive sudden shifts between alternative states. When these tipping points are present, slight alterations in key parameters or in environmental conditions can trigger major and often irreversible transformations~\cite{Dakos2019,Sardanyes2024,Scheffer2009,Scheffer2001a,Sole2024}. The preservation of healthy ecological communities emerges from the interplay of multiple factors acting at distinct spatial and temporal scales~\cite{Sole2024}. At the local scale, habitat loss and fragmentation are major drivers of biodiversity decline, altering the structure and function of ecosystems across biomes \cite{Fahrig2003,Haddad2015,Laurance2010}. As natural areas are degraded or fragmented, carrying capacities are reduced, interaction networks are altered, and species persistence and thus biodiversity is threatened~\cite{Brooks2002,Haddad2015,Hanski2005}. Habitat loss-induced extinctions have been described for several species, including the Northern Spotted Owl, which has suffered steep population declines due to old-growth forest logging as well as due to interspecific competition processes~\cite{FRANKLIN2021109168}. The Fringed Leaf Frog, formerly restricted to the Atlantic Forest near São Paulo, Brazil, is believed to be extinct following extensive habitat loss and degradation due to deforestation and urban expansion~\cite{Silvano2005}. 

Understanding species' responses to habitat loss is challenging due to the nonlinear nature of ecological dynamics~\cite{Hanski2005,Sardanyes2024,Sole2024}. This is especially important in species experiencing fluctuations, early identified in natural populations~\cite{Costantino1997,Dennis1997,Elton1924a, Elton1924b,Schaffer1984}. Perturbing a given trophic level can cause a cascading effect: the depletion of a resource species, e.g., primary producers, can undermine consumer populations, and the loss of consumers can in turn alter resource dynamics. Empirical studies, both from long-term field observations and experimental manipulations, have documented the cascading consequences (e.g., extinction debt: how delayed extinctions follow habitat loss~\cite{Tilman1994}) of habitat destruction on species richness, community dynamics, and ecosystem services~\cite{Gonzalez1998,Tilman1994}. These impacts are especially pronounced in fragmented landscapes, where the loss of connectivity and the decline in habitat quality interact to amplify extinction risks~\cite{Fahrig2003,Hanski2011}.

Parallel to empirical efforts, theoretical models have played a crucial role in identifying general principles governing species persistence and extinction under habitat loss. Classic metapopulation frameworks~\cite{Hanski1991,Levins1969} and spatially explicit models~\cite{Hanski2011,Ovaskainen2004,Sardanyes2019,With1999} have provided insights into thresholds of habitat availability and connectivity necessary for population viability. More recent theoretical work has expanded to include further nonlinear ecological interactions, such as competition and predation~\cite{Keitt1997}, yet facilitation--a key mechanism in many ecological contexts--has received comparatively less attention in models of habitat destruction.

Importantly, many resource species--especially plants in terrestrial habitats--exhibit facilitation. Facilitation in ecology describes positive interactions in which one species enhances another's growth or survival by improving local conditions--through shading, nutrient enrichment, or other stress-reducing mechanisms~\cite{Brooker2008}. Such cooperative interactions are especially important in stressful environments such as drylands~\cite{BertnessCallaway1994}, which collectively cover 41\% of Earth's land surface and support over 38\% of the human population. Under water scarcity, facilitation can allow individuals to enhance each other's growth or survival through mechanisms such as microclimate amelioration, nutrient enrichment, water retention and improved moisture, or physical protection~\cite{Soliveres2015}. Field observations in drylands demonstrate nonlinear responses to aridity, with an abrupt transition from functional to degraded states, consistent with critical transitions in complex ecological systems~\cite{Berdugo2017}. 

As mentioned, both mathematical and computational research have provided results to understanding how habitat loss shapes population and metapopulation dynamics. Despite this progress, most of the classical models omit explicit mechanisms for habitat destruction under facilitation, potentially underestimating the vulnerability of ecological systems where these cooperative feedbacks are relevant. This gap was addressed partially in \cite{JosVidBlaiErnest2021} (see also~\cite{Sardanyes2019}). These authors analysed predator-prey models incorporating a quadratic growth term modelling facilitation in a resource species, also considering a habitat loss parameter directly reducing the carrying capacity. Their analysis revealed how habitat loss reduce equilibrium densities and precipitates discontinuous population collapses.

Building on that work, Seenivasan and Paul in \cite{Seenivasan2025} examined the same facilitation-habitat loss framework in greater detail, including both Holling type I and II functional responses for predation. The inclusion of Holling type II allowed them to capture saturation effects in predator feeding rates, producing qualitatively different stability regimes and earlier predator extinction thresholds compared to the case with no saturation in predation (type I). Their analyses, along with the findings reported in \cite{JosVidBlaiErnest2021}, were limited to local dynamical properties and parameter-space exploration, without extending to a global characterization of phase-space topology or the bifurcation structure governing large-scale system transitions.

Global dynamics analyses in ecological models are essential because they reveal the full range of possible long-term behaviours--such as coexistence, extinction, chaos, or oscillations--rather than just local stability near equilibria. By identifying invariant structures and separatrices in phase space, global analysis clarifies how small parameter changes can redirect ecosystems toward entirely different states. This perspective is crucial for detecting tipping points and understanding abrupt regime shifts driven by processes like habitat loss, exploitation, or climate change, especially when they are governed by global bifurcations.

A broad class of predator-prey systems with a strong Allee effect in the prey was studied in \cite{saleeWang2011}. Under suitable assumptions on the functional response and parameter ranges, the authors established the occurrence of a Hopf bifurcation, the existence of a unique limit cycle, its disappearance through a heteroclinic loop, and the absence of periodic orbits beyond this global bifurcation. These results, however, rely on specific hypotheses and do not provide a description of the phase portraits. The model considered here, given by \eqref{eq:1}, belongs to this general class, which traces back to the pioneering works of Bazykin and Berezovskaya \cite{BazykinBerezovskaya1979} and was later popularized in \cite{1998Bazykin}. In the literature it is commonly referred to as the Bazykin--Berezovskaya model. Our approach differs from that of \cite{saleeWang2011} in that we aim at a full global characterization of the dynamics in the first quadrant, including an explicit description of the bifurcation curves.

In the present work, we extend the results for the resource-consumer model with facilitation and habitat loss provided by Vidiella et al. \cite{JosVidBlaiErnest2021}. By using techniques from the qualitative theory of dynamical systems, we provide a complete description of the global dynamics, classifying all the possible phase portraits in the Poincar\'e disk over the relevant ecological parameter ranges. A central result is the description of the heteroclinic bifurcation curve in the parameter space, which marks the transition between coexistence and full extinction regimes as habitat loss increases. To complement these analyses and gain a comprehensive understanding, we construct a piecewise-linear (PWL) analogue that preserves the qualitative behaviour of the original model across the parameter space. This surrogate system allows an explicit determination of the heteroclinic connection, offering an accessible yet rigorous way to explore the ecological implications of global structural changes in the dynamics. Finally, we numerically explore how extrinsic noise impacts dynamics, including stochasticity in the resource species. These results show that stochastic forcing can advance the onset of collapse, effectively lowering the habitat loss threshold at which extinctions occur. This result highlights the combined destabilizing effects of habitat destruction and environmental variability under facilitation, with direct consequences for predicting and managing ecological resilience, e.g., drylands.

\section{Mathematical model} \label{Sec2}
Here we introduce the mathematical model describing the population dynamics of a  resource species, i.e., vegetation ($V$), and a consumer ($S$) analysed in \cite{JosVidBlaiErnest2021}. The model considers facilitation processes in the resource species, being subjected to habitat loss and consumption by the other species. By assuming a Holling type I functional response, the model reads:
\begin{equation}\label{eq:1}
	\begin{aligned}
		\frac{dV}{dt}&=\alpha \,V^{2}(1-D-V)-\epsilon \,V-\epsilon_{S} \,V \,S,\\ \frac{dS}{dt}&=\mu \,\epsilon_{S}\,V \,S-\delta \,S.
	\end{aligned}
\end{equation}

The parameter $\alpha$ corresponds to the intrinsic growth rate of the vegetation, which is not exponential but hyperbolic due to the process of facilitation, i.e., autocatalysis~\cite{Sardanyes2010,Sardanyes2019}. This growth is limited by a logistic function with normalized carrying capacity that includes the fraction of habitat loss $D \in [0,1)$~\cite{Sardanyes2019,Sole2024}, and by intraspecific competition (e.g., limited resources), whereas $\epsilon$ accounts for its mortality rate. Herbivores $S$ consume vegetation at a rate $\epsilon_S$, with an energetic efficiency parameter $\mu \in [0,1]$, and their population declines due to a natural mortality rate $\delta$. Since the system will be rescaled to facilitate analysis (see below), understanding these parameters is essential for translating the results back to the original formulation.

\begin{pro}\label{pro:equilibirumparameters}
	The original system \eqref{eq:1} is equivalent to the rescaled system
	\begin{equation}\label{eq:2}
		\begin{aligned}
			\dot{x} &= -A x^3 + B x^2 - x y - x, \\
			\dot{y} &= F x y - G y,
		\end{aligned}
	\end{equation}
	under the variable transformations
\begin{equation*}\label{eq:transformation}
	V := l \cdot x, \quad S := n \cdot y, \quad t := \frac{t}{e},
\end{equation*}
and parameter choices  
\begin{equation*}\label{eq:parameter_choices}
	n := \frac{e}{\epsilon_S}, \quad l := 1, \quad e := \epsilon.
\end{equation*}
The new dimensionless parameters are  
\begin{equation}\label{eq:3}
	A = \frac{\alpha}{\epsilon}, \quad B = \frac{\alpha (1 - D)}{\epsilon}, \quad F = \frac{\epsilon_S \mu}{\epsilon}, \quad G = \frac{\delta}{\epsilon}.
\end{equation}
\end{pro}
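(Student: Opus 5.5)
The plan is a direct change of variables. I substitute $V = l x$, $S = n y$ and a rescaled time $\tau = e t$ into \eqref{eq:1}, then choose $l, n, e$ so the coefficients of $x$ and $xy$ in the first equation become $-1$. Since $d/dt = e\, d/d\tau$, writing $\dot{} = d/d\tau$ gives
\begin{equation*}
e\,l\,\dot x = \alpha l^2 x^2 (1-D-lx) - \epsilon l x - \epsilon_S l n x y, \qquad e\,n\,\dot y = \mu\epsilon_S l n x y - \delta n y .
\end{equation*}
Dividing the first equation by $e\,l$ and the second by $e\,n$ yields
\begin{equation*}
\dot x = -\frac{\alpha l^2}{e} x^3 + \frac{\alpha l(1-D)}{e} x^2 - \frac{\epsilon_S n}{e} x y - \frac{\epsilon}{e} x, \qquad \dot y = \frac{\mu \epsilon_S l}{e} x y - \frac{\delta}{e} y .
\end{equation*}

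Next I impose the normalisations $\epsilon/e = 1$ and $\epsilon_S n/e = 1$, which give $e = \epsilon$ and $n = \epsilon/\epsilon_S$. The choice $l = 1$ is allowed, since the carrying capacity in \eqref{eq:1} is already normalised to one. Reading off the remaining coefficients then gives $A = \alpha/\epsilon$, $B = \alpha(1-D)/\epsilon$, $F = \mu\epsilon_S/\epsilon$ and $G = \delta/\epsilon$, exactly as in \eqref{eq:3}.

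Finally, I justify the word ``equivalent''. The map $(V,S,t) \mapsto (x,y,\tau)$ is a linear diffeomorphism with positive scalings, because $\epsilon, \epsilon_S > 0$. Hence it sends solutions to solutions and preserves orientation of time, the first quadrant, equilibria, their stability, limit cycles and heteroclinic connections. The phase portraits of \eqref{eq:1} and \eqref{eq:2} are therefore equivalent.

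There is no real obstacle here. The only point needing care is the time rescaling: the paper writes $t := t/e$, which I interpret as new time $\tau = e t$ with $d/dt = e\, d/d\tau$. With that sign of the scaling fixed, the coefficients come out as stated.
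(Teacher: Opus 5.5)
Your proposal is correct and is the direct substitution the paper has in mind; the paper states this rescaling without writing out a proof. Your reading of $t := t/e$ as $\tau = e t$ is right, and with $e=\epsilon$, $n=\epsilon/\epsilon_S$, $l=1$ the coefficients come out as $A=\alpha/\epsilon$, $B=\alpha(1-D)/\epsilon$, $F=\mu\epsilon_S/\epsilon$, $G=\delta/\epsilon$. Your explicit assumption $\epsilon_S>0$ is genuinely needed: Table~\ref{ta:1} allows $\epsilon_S=0$, where $n=e/\epsilon_S$ is undefined, so it is good that you stated it.
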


In Table~\ref{ta:1}, we present the valid ranges of parameters for both the original and the rescaled systems.  
\begin{table}[h]
	\centering
	\begin{minipage}{0.45\textwidth}
		\centering
		\begin{tabular}{|c|}
			\hline
			Parameters of original system \\ \hline
			$\alpha\in [0,\infty)$ \\ \hline
			$D\in[0,1]$ \\ \hline
			$\epsilon\in(0,\infty)$ \\ \hline
			$\epsilon_{S}\in[0,\infty)$ \\ \hline
			$\mu\in[0,1)$\\ \hline
			$\delta\in[0,\infty)$ \\ \hline
		\end{tabular}
	\end{minipage}%
	\hspace{0.05\textwidth}
	\begin{minipage}{0.45\textwidth}
		\centering
		\begin{tabular}{|c|}
			\hline
			Parameters of rescaled system \\ \hline
			$A\in[0,\infty)$ \\ \hline
			$B\in[0,\infty)$ \\ \hline
			$F\in[0,\infty)$ \\ \hline
			$G\in[0,\infty)$ \\ \hline
		\end{tabular}
	\end{minipage}
	\caption{Parameter ranges for the original and the rescaled systems.}\label{ta:1}
\end{table}

\section{Results and discussion}\label{Sec3}

\subsection{Equilibrium points and local dynamics}\label{subsec:EP}

In this section, we determine the equilibrium points of system~\eqref{eq:2}, also introducing a more convenient formulation of the system in which the coordinates of the equilibria are taken as parameters. This representation allows us to study the dynamics in terms of the relative positions of the equilibria. We also analyse the stability of the equilibria by means of linear stability analysis.
\subsubsection{Equilibrium points}

To simplify notation and computations, we assume that the equilibrium points \((x^*, y^*) \in \mathbb{R}^+ \times \mathbb{R}^+\) lie in the first quadrant. We consider only the case in which their number is maximal, that is, when \(B^2 - 4A > 0\). Under this condition, system~\eqref{eq:2} has four equilibria, given by:
\begin{equation*}
	\begin{aligned}
		P^{*}_{0} &= (x_{0}^{*}, y_{0}^{*}) = (0, 0), &P^{*}_{1-} &= (x^{*}_{1-}, y^{*}_1) = \left( \frac{B - \sqrt{B^{2} - 4A}}{2A}, 0 \right),  \\
		P^{*}_{1+} &= (x^{*}_{1+}, y^{*}_1) = \left( \frac{B + \sqrt{B^{2} - 4A}}{2A}, 0 \right), 
		&P^{*}_{2} &= (x_2^{*}, y_2^{*}) = \left( \frac{G}{F}, -\frac{AG^{2} - BGF + F^{2}}{F^{2}} \right). 
	\end{aligned}
\end{equation*}

\subsubsection{Redefining parameters}

To further simplify the study of system \eqref{eq:1}, the parameters of \eqref{eq:2} can be chosen so that the relative position of the equilibrium points determines their qualitative behaviour. 

\begin{pro}\label{l1} 
	Let $x_{0}, x_{1}, x_{e}, F > 0$ and assume that
	$$
	A = \frac{1}{x_{0} x_{1}}, \quad B = \frac{x_{0} + x_{1}}{x_{0} x_{1}}, \quad G = x_{e} F.
	$$
	Then, the differential system \eqref{eq:3} can be written as
	\begin{equation}\label{eq:4}
		\begin{aligned}
			\dot{x} &= -\frac{x^3}{x_{0} x_{1}} + \frac{(x_{0} + x_{1}) x^2}{x_{0} x_{1}} - x y - x, \\
			\dot{y} &= F (x y - x_{e} y)
		\end{aligned}
	\end{equation}
	and its equilibrium points are
	$$
	P^{*}_{0} = (0, 0), \; P^{*}_{1-} = (x_{0}, 0), \; P^{*}_{1+} = (x_{1}, 0) \text{, and } P^{*}_{2} = \left(x_{e}, \frac{(x_{1} - x_{e})(x_{e} - x_{0})}{x_{0} x_{1}}\right) =: (x_{e}, y_e).
	$$
\end{pro}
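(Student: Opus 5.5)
The plan is direct substitution followed by factorisation of the equilibrium equations. (The statement says ``system \eqref{eq:3}'', but it clearly means system \eqref{eq:2} with the parameters $A,B,G$ reparametrised as indicated.)

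\textbf{Step 1: rewrite the vector field.} Insert $A=1/(x_0x_1)$, $B=(x_0+x_1)/(x_0x_1)$ and $G=x_eF$ into \eqref{eq:2}:
\begin{itemize}
\item $-Ax^3+Bx^2$ becomes $-x^3/(x_0x_1)+(x_0+x_1)x^2/(x_0x_1)$;
\item $Fxy-Gy=F(xy-x_ey)$.
\end{itemize}
This gives exactly \eqref{eq:4}. Positivity of $x_0,x_1,x_e,F$ ensures the new parameters are admissible: $A,B,F,G>0$.

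\textbf{Step 2: compute the equilibria.} Factor the right-hand sides:
\[
\dot x=\frac{x}{x_0x_1}\bigl(-x^2+(x_0+x_1)x-x_0x_1-x_0x_1y\bigr),\qquad \dot y=Fy(x-x_e).
\]
The polynomial identity $-x^2+(x_0+x_1)x-x_0x_1=-(x-x_0)(x-x_1)$ is the only real content. It shows that $x_0$ and $x_1$ are the roots of $Ax^2-Bx+1=0$. This is consistent with the formulas for $x^*_{1\pm}$ given earlier.

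\textbf{Step 3: split into cases on $\dot y=0$.}
\begin{itemize}
\item \emph{Case $y=0$.} Then $\dot x=0$ gives $x=0$ or $(x-x_0)(x-x_1)=0$. This yields $P_0^*$, $P_{1-}^*=(x_0,0)$ and $P_{1+}^*=(x_1,0)$. The labels $\mp$ agree with the earlier formulas when $x_0<x_1$; that ordering is implicit and can be assumed without loss of generality by symmetry.
\item \emph{Case $x=x_e$ with $x\neq0$.} Then $\dot x=0$ gives $x_0x_1y=-(x_e-x_0)(x_e-x_1)=(x_1-x_e)(x_e-x_0)$. This yields $y_e$ as stated.
\end{itemize}
As a cross-check, $y_e$ agrees with $-(AG^2-BGF+F^2)/F^2$ after substituting $G/F=x_e$.

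There is no real obstacle here. The only point needing care is the sign bookkeeping in the factorisation, which produces $y_e$.
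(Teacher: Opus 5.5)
Your proposal is correct and takes essentially the same route as the paper, which gives no separate argument: the statement follows by substituting $A,B,G$ into \eqref{eq:2} and reading off the equilibria from $\dot x=-\frac{x}{x_0x_1}\bigl((x-x_0)(x-x_1)+x_0x_1y\bigr)$, $\dot y=Fy(x-x_e)$, exactly as you do. Your side remarks are also accurate: the reference should be to \eqref{eq:2}, and the labels $P^*_{1\mp}$ presuppose $x_0<x_1$.
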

Here, we provide the explicit correspondence between the new parameters $(x_0, x_1, x_e, F)$ and the original parameters $(\alpha, D, \epsilon, \epsilon_S, \mu, \delta)$, which simplifies the ecological interpretation of the dynamical results.

\begin{equation}\label{eq:equilibria-original-parameters}
x_{0,1}
=\frac{1-D \mp \sqrt{(1-D)^2 - 4\varepsilon/\alpha}}{2},
\qquad
x_{e} = \frac{\delta}{\varepsilon_S \mu},\qquad
F = \frac{\varepsilon_S \mu}{\varepsilon}.
\end{equation}

Their ecological roles are as follows:
\begin{enumerate}[(i)]
    \item $x_0$:
    The lower vegetation equilibrium in the absence of consumers, representing the unstable threshold below which vegetation cannot recover. Since $x_0$ increases with habitat destruction, vegetation becomes unstable at increasingly higher biomass levels, reflecting a loss of resilience.
    \item $x_1$:
    The upper vegetation equilibrium (carrying capacity) without consumers.  The greater habitat destruction reduces $x_1$ and thus lowers the maximum sustainable vegetation biomass.
    \item  $x_e$:
    The ratio between consumer mortality and effective consumption. 
	\item  $F$:  
    The ratio between effective consumer feeding and vegetation mortality.  
    Small values of $F$ arise when vegetation mortality dominates, which generates the sharp slow-fast dynamics discussed in Section~\ref{subsec:slowfast}. Conversely, large values of $F$ correspond to vegetation that is highly resilient to consumption.
\end{enumerate}

\subsubsection{Configuration of equilibria in the first quadrant}
Once equilibria of \eqref{eq:4} have been determined, we analyse the most relevant configurations from an ecological perspective, focusing on those in which all four equilibria lie in the first quadrant. These possible configurations of the four equilibrium points satisfying (${0\leq x_{0}\leq x_{e}\leq x_{1}}$), are:
\begin{enumerate}[(i)]
    \item Transcritical bifurcation: $0<x_{0}=x_{e}<x_{1}$ and $y_{e}=0$.
    \item Generic configuration: $0<x_{0}<x_{e}<x_{1}$ and $y_{e}>0$.
    \item Transcritical bifurcation: $0<x_{0}<x_{e}=x_{1}$ and $y_{e}=0$.
    \item Saddle-node bifurcation: $0<x_{0}=x_{e}=x_{1}$ and $y_{e}=0$.
    \item Transcritical and saddle-node bifurcation: $0=x_{0}=x_{e}=x_{1}$ and $y_{e}=0$. 
\end{enumerate}
	
In what follows, we restrict our analysis to the generic configuration ($0 < x_{0} < x_{e} < x_{1}$, $y_{e} > 0$), corresponding to feasible coexistence of vegetation and consumers. This is the ecologically relevant regime and the one in which the system exhibits its most complex dynamics.

\subsubsection{Local analysis of equilibria}
To study the behaviour of the equilibrium points, we linearize the system \eqref{eq:4} and apply the Hartman--Grobman Theorem to analyse the local dynamics around these points. The Jacobian matrix of the system at an equilibrium point \((x^*, y^*)\) is given by

\begin{equation*}
	J(x^{*}, y^{*}) =
	\begin{pmatrix}
		-\dfrac{3 (x^{*})^{2}}{x_{1} x_{0}}
		+ \dfrac{2 (x_{0} + x_{1}) x^{*}}{x_{1} x_{0}}
		- y^{*} - 1
		& -x^{*} \\[10pt]
		F y^{*} & F (x^{*} - x_{e})
	\end{pmatrix}.
\end{equation*}

In a planar system, the dynamical behaviour of equilibria are characterized by the trace ($\tau$) and the determinant ($\Delta$) of the linearized system, so these will be calculated for each equilibrium:
\begin{equation}\label{eq:tracedet}
	\begin{aligned}
		P^{*}_{0}: \quad & \tau(0,0) = -x_{e}F - 1, \quad & \Delta(0,0) &= x_{e}F, \\
		P^{*}_{1-}: \quad & \tau(x_{0},0) = \frac{F x_{1}(x_{0} - x_{e}) + (x_{1} - x_{0})}{x_{1}}, \quad & \Delta(x_{0},0) &= -\frac{F(x_{1} - x_{0})(x_{e} - x_{0})}{x_{1}}, \\
		P^{*}_{1+}: \quad & \tau(x_{1},0) = \frac{F x_{0}(x_{1} - x_{e}) + (x_{0} - x_{1})}{x_{0}}, \quad & \Delta(x_{1},0) &= -\frac{F( x_{1}-x_{0})(x_{1}-x_{e})}{x_{0}}, \\
		P^{*}_{2}: \quad & \tau(x_{e},y_{e}) = \frac{x_{e}(-2x_{e} + x_{0} + x_{1})}{x_{1} x_{0}}, \quad & \Delta(x_{e},y_{e}) &= \frac{F x_{e}(x_{1}-x_{e})(x_{e} - x_{0})}{x_{1} x_{0}}.
	\end{aligned}
\end{equation}

\begin{pro}
For $0<x_{0}<x_{e}<x_{1}$, equilibrium points have the following behaviour:
	\begin{enumerate}[(i)]
		\item $P^{*}_{0}$ is a stable node. 
		\item $P^{*}_{1-}$ is a saddle point. 
        \begin{enumerate}[(a)]
			\item Eigenvalues: \( \lambda_{+}^{0} = \frac{x_{1} - x_{0}}{x_{1}} > 0 \) and \( \lambda_{-}^{0} = F(x_{0} - x_{e}) < 0 \).
			\item Eigenvectors: \( v_{+}^{0} =(1,0)\) and \( v_{-}^{0} = \left(\frac{x_{0} x_{1}}{F x_{1}(x_{e} - x_{0}) + x_{1} - x_{0}} , 1 \right) \).
		\end{enumerate}
		\item  $P^{*}_{1+}$ is a saddle point. 
		\begin{enumerate}[(a)]
			\item Eigenvalues: \( \lambda_{+}^{1} = F(x_{1} - x_{e}) > 0 \) and \( \lambda_{-}^{1} = \frac{x_{0} - x_{1}}{x_{0}} < 0 \).
			\item Eigenvectors: \( v_{+}^{1} = \left( -\frac{x_{0} x_{1}}{F x_{0}(x_{1} - x_{e}) + x_{1} - x_{0}} , 1 \right) \) and \( v_{-}^{1} = \left( 1 , 0 \right) \).
		\end{enumerate}
		\item $P^{*}_{2}$ can be either a stable node, a stable focus, an unstable node, or an unstable focus, depending on the parameter conditions given in Table~\ref{tab:stability_cases}.
	\end{enumerate}
		
	\begin{table}[H]
	\centering
	\renewcommand{\arraystretch}{2.2}
	\begin{tabular}{|c|c|}
		\hline
		\textbf{Parameter Conditions} & \textbf{Type} \\ 
		\hline
		$\dfrac{x_{0}+x_{1}}{2}>x_{e}$, 
		$\dfrac{x_{e}(-2x_{e} + x_{0} + x_{1})^{2}}
		{4 x_{1} x_{0} (x_{1} - x_{e})(x_{e} - x_{0})}\geq F$ 
		& Unstable Node \\[6pt]
		\hline
		$\dfrac{x_{0}+x_{1}}{2}>x_{e}$, 
		$\dfrac{x_{e}(-2x_{e} + x_{0} + x_{1})^{2}}
		{4 x_{1} x_{0} (x_{1} - x_{e})(x_{e} - x_{0})}<F$ 
		& Unstable Focus \\[6pt]
		\hline
		$\dfrac{x_{0}+x_{1}}{2}<x_{e}$, 
		$\dfrac{x_{e}(-2x_{e} + x_{0} + x_{1})^{2}}
		{4 x_{1} x_{0} (x_{1} - x_{e})(x_{e} - x_{0})}\geq F$ 
		& Stable Node \\[6pt]
		\hline
		$\dfrac{x_{0}+x_{1}}{2}<x_{e}$, 
		$\dfrac{x_{e}(-2x_{e} + x_{0} + x_{1})^{2}}
		{4 x_{1} x_{0} (x_{1} - x_{e})(x_{e} - x_{0})}<F$ 
		& Stable Focus \\[6pt]
		\hline
	\end{tabular}
	\caption{Classification of the equilibrium point $P^{*}_{2}$ based on parameter conditions.}	\label{tab:stability_cases}
        \end{table}

	\begin{proof}
		From the values obtained in \eqref{eq:tracedet} we can describe the local equilibrium behaviour.
		\begin{enumerate}[(i)]
			\item $\Delta(0,0)>0$, $\tau(0,0)<0$ and $\tau(0,0)^{2}-4\Delta(0,0)=(x_{e}^{2}F^{2}-1)^{2}>0$. Hence, $P^{*}_{0}$ is a stable node.
			\item $\Delta(x_{0},0)<0$. Hence, $P^{*}_{1-}$ is a saddle.
			\item $\Delta(x_{1},0)<0$. Hence, $P^{*}_{1+}$ is a saddle.
			\item $\Delta(x_{1},0)<0$. The stability of the equilibrium points is determined by the sign of $\tau(x_e, y_e)$, while its nature (whether it is a focus or a node) depends on the expression $\tau(x_e, y_e)^2 - 4\Delta(x_e, y_e)$. Specifically:  
			\begin{enumerate}[(a)]
				\item If $\tau(x_e, y_e) < 0$, the equilibrium is stable; if $\tau(x_e, y_e) > 0$, it is unstable. In Section~\ref{sec:Hopf}, is dealt with the case where $\tau(x_e, y_e)= 0$. 
				\item If $\tau(x_e, y_e)^2 - 4\Delta(x_e, y_e) < 0$, the equilibrium is a focus; if $\tau(x_e, y_e)^2 - 4\Delta(x_e, y_e) \geq 0$, it is a node.  
			\end{enumerate}
		\end{enumerate}
	\end{proof}
\end{pro}
\begin{remark}
	It is well known that, in contrast to quadratic planar differential systems, where a focus is necessary for the existence of a limit cycle, cubic planar differential systems may admit limit cycles surrounding a node (see, for instance, \cite{COPPEL1966293}). Therefore, the fact of having pure real eigenvalues does not influence the topological behaviour of the phase portrait. However, from an ecological standpoint, the presence of population oscillations could have important implications for conservation strategies. The critical curve separating the node and the focus region for the coexistence equilibrium $(x_{e},y_{e})$ is $F_{FN}(x_{e})=\frac{x_{e}(-2x_{e} + x_{0} + x_{1})^{2}}{4 x_{1} x_{0} (x_{1} - x_{e})(x_{e} - x_{0})}$.
\end{remark}

\subsection{Positivity and boundedness of solutions}
All the considered systems belong to the class of Kolmogorov models, originally introduced in \cite{Sigmund2007}, in which each equation can be written in the form
    \begin{equation*}
        \begin{aligned}
            \dot x &= x\,f(x,y),\\
            \dot y &= y\,g(x,y),
        \end{aligned}
    \end{equation*}
with $f$ and $g$ usually smooth functions. In particular, positivity of solutions follows immediately from the invariance of the axes. In this subsection we establish that, in the ecologically relevant parameter range $x_0<x_e<x_1$, all trajectories are uniformly bounded by means of a comparison argument. 

\begin{lema}[Uniform boundedness of solution]
Assume $0<x_0<x_e<x_1$, $0<F<\infty$ and let $(x(t),y(t))$ be a solution of system~\eqref{eq:4} with initial condition
$(x(0),y(0))\in\mathbb{R}^2_+$. Then all trajectories are uniformly bounded. More precisely, there exists $M>0$ such that
$$
0\le x(t)\le M,\quad 0\le y(t)\le M,\quad \text{for all } t\ge 0.$$
Moreover, the set $\mathcal{B}:=\{(x,y)\in\mathbb{R}^2_+:\; x+\tfrac{1}{F}y\le M\}$ is absorbing.
\end{lema}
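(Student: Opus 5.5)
The natural route is a Lyapunov-type comparison with the weighted total population $W(t):=x(t)+\tfrac1F y(t)$. Positivity is already available: the axes are invariant because system~\eqref{eq:4} is of Kolmogorov type, so solutions starting in $\mathbb{R}^2_+$ stay there. Differentiating along solutions of \eqref{eq:4}, the predation terms cancel:
\begin{equation*}
\dot W=-\frac{x^3}{x_0x_1}+\frac{(x_0+x_1)x^2}{x_0x_1}-x-x_e y .
\end{equation*}
The aim is a linear differential inequality $\dot W\le C-\eta W$ with constants $\eta>0$ and $C\ge 0$.

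Take $\eta:=\min\{1,\,x_eF\}>0$. Since $x_e y=x_eF\cdot \tfrac1F y\ge \eta\,\tfrac1F y$ and $x\ge \eta x$, we get
\begin{equation*}
\dot W+\eta W\le h(x):=-\frac{x^3}{x_0x_1}+\frac{(x_0+x_1)x^2}{x_0x_1}, \qquad x\ge 0 .
\end{equation*}
The function $h$ is a cubic with negative leading coefficient, so it is bounded above on $[0,\infty)$. Its maximum is attained at $x=\tfrac{2(x_0+x_1)}{3}$, giving $C:=\max_{x\ge0}h(x)=\tfrac{4(x_0+x_1)^3}{27x_0x_1}$. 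Gronwall's inequality (or a scalar comparison) then yields
\begin{equation*}
W(t)\le W(0)e^{-\eta t}+\frac{C}{\eta}\bigl(1-e^{-\eta t}\bigr)\le \max\Bigl\{W(0),\frac{C}{\eta}\Bigr\}.
\end{equation*}
Hence $0\le x(t)\le W(t)$ and $0\le y(t)\le F\,W(t)$ are bounded for all $t\ge0$. This also gives global existence forward in time, since no blow-up can occur.

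To obtain the constant $M$ and absorbance of $\mathcal{B}$, fix any $M>C/\eta$. On the line $W=M$ in $\mathbb{R}^2_+$ we have $\dot W\le C-\eta M<0$, so $\mathcal{B}$ is positively invariant. From the explicit bound, $\limsup_{t\to\infty}W(t)\le C/\eta<M$, so every trajectory enters $\mathcal{B}$ in finite time, uniformly on bounded sets of initial data. Once inside, $x\le M$ and $y\le FM$. Taking $M$ additionally at least $\max\{1,F\}\cdot C/\eta$, or replacing $M$ by $\max\{M,FM\}$ in the statement's bound for $y$, gives the stated form.

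The main subtlety is interpretive rather than technical. A bound of the form "$\le M$ for all $t\ge0$" cannot be independent of the initial condition, so $M$ must be allowed to depend on $W(0)$, with uniformity meaning eventual entry into the fixed absorbing set $\mathcal{B}$. I would state this explicitly. The only computational point is checking that the cancellation of the $xy$ terms requires exactly the weight $1/F$ on $y$, which it does.
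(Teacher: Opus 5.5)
Your proof is correct and follows essentially the same route as the paper: the weighted total $x+\tfrac1F y$, cancellation of the predation terms, a linear inequality $\dot W\le C-\eta W$, and scalar comparison. The paper instead first bounds $x(t)\le\max\{x(0),x_1\}$ and then uses the rate $Fx_e$, with $C$ the maximum of $\phi$ on that interval; your choice $\eta=\min\{1,x_eF\}$ together with a global bound on the cubic $h$ over $[0,\infty)$ skips that step and makes $C$ independent of the initial data, so your absorbing set is genuinely uniform.
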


\begin{proof}
Since $x(t),y(t)\ge0$, we have
\[
\dot x \le -\frac{x}{x_0x_1}(x-x_0)(x-x_1).
\]
Hence, if $x>x_1$ then $\dot x<0$, and therefore
\[
0\le x(t)\le M_x:=\max\{x(0),x_1\}, \qquad t\ge0.
\]

Define $w(t)=x(t)+\tfrac1F y(t)$. Using system~\eqref{eq:4} and $y=F(w-x)$ we obtain
\[
\dot w=\phi(x)-F x_e w,
\]
where
\[
\phi(x)=-\frac{x^3}{x_0x_1}+\frac{x_0+x_1}{x_0x_1}x^2-x+F x_e x.
\]
Since $x(t)\in[0,M_x]$ and $\phi$ is continuous, there exists
\[
C:=\max_{0\le x\le M_x}\phi(x)<\infty
\]
such that $\dot w\le C-Fx_e w$. Solving this inequality yields
\[
w(t)\le \frac{C}{F x_e}+\Big(w(0)-\frac{C}{F x_e}\Big)\ee^{-F x_e t}, \qquad t\ge0.
\]
Consequently, $w$ is uniformly bounded. Since $x\le w$ and $y=F(w-x)\le Fw$, both components are uniformly bounded. Moreover,
\[
\mathcal{B}=\{(x,y)\in\mathbb{R}^2_+:\; x+\tfrac1F y\le M\}
\]
is an absorbing set, which completes the proof.
\end{proof}

\begin{remark}
The existence of the absorbing set $\mathcal{B}$
implies that the semiflow generated by system~\eqref{eq:4} is \emph{dissipative} in $\mathbb{R}^2_+$. In particular, all solutions are uniformly bounded forward in time and every bounded set of initial data is eventually absorbed into the fixed bounded set $\mathcal{B}$.
\end{remark}

\subsection{Uniqueness and characterization of the limit cycle}\label{subsec:ULC}

Since the equilibrium points $P_{0}^{*}, P_{1-}^{*}$, and $P_{1+}^{*}$ lie on the invariant line $y = 0$, no periodic orbit can exist around them. Thus, the only equilibrium that could have a surrounding limit cycle is $P_{2}^{*}$.
\begin{tho} (Uniqueness of limit cycles for generalized Li\'enard systems \cite{Zegeling})\label{th:uniqueness}
	
	Consider the system of differential equations 
	\[
	\frac{dx}{dt} = F_{L}(x) - \psi(y), \quad \frac{dy}{dt} = g_{L}(x).
	\]
	If the following conditions are satisfied in the band $x_{-} < x < x_{+}$:
	\begin{enumerate}[(i)]
		\item $(x - x_{g_{L}})g_{L}(x) > 0,$ for $x \neq x_{g_{L}}$;
		\item $(x - x_{f_{L}})f_{L}(x) > 0,$ for $x \neq x_{f_{L}}$, $x_{f_{L}} > x_{g_{L}}$;
		\item $\frac{d\psi(y)}{dy} > 0$;
		\item $\frac{d}{dx}\left(\frac{f_{L}(x)}{g_{L}(x)}\right) \leq 0$ in $x_{-} < x < x_{g_{L}}$ and $x_{f_{L}} < x < x_{+}$;
	\end{enumerate}
	then the system has at most one unique stable and hyperbolic limit cycle in the band $x_{-} < x < x_{+}$.
\end{tho}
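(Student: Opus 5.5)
The plan is the classical Zhang Zhifen-type comparison argument. We show that every periodic orbit contained in the band $x_-<x<x_+$ is hyperbolic and attracting, by proving that its characteristic exponent
\[
\oint_\Gamma \operatorname{div}\,dt=\oint_\Gamma f_L(x)\,dt
\]
is negative. Here $f_L=F_L'$ is the divergence, since $\partial_y g_L=0$. Once this holds, uniqueness follows by a standard topological argument. Periodic orbits in the plane are nested around the unique equilibrium inside the band. If there were two consecutive ones, both would be attracting. The annulus between them would then contain trajectories $\omega$-limiting to the inner cycle and others to the outer one. Their separating set would have to contain another invariant closed curve, necessarily a repelling or semistable cycle, contradicting the fact that all cycles are attracting.

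\textbf{Locating cycles.} By (i) and (iii), the only equilibrium in the band is $(x_{g_L},y^*)$ with $\psi(y^*)=F_L(x_{g_L})$, and any cycle $\Gamma$ must surround it. Hence $\Gamma$ crosses $x=x_{g_L}$ and meets both half-bands. We use $\dot y=g_L(x)$ to rewrite
\[
\oint_\Gamma f_L\,dt=\oint_\Gamma \frac{f_L(x)}{g_L(x)}\,dy
\]
on arcs where $g_L\neq0$. We also use $\oint_\Gamma dy=0$ and $\oint_\Gamma d\big(F_L(x)-\psi(y)\big)=0$, so that suitable constants can be subtracted freely.

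\textbf{Sign of the integral.} Split $\Gamma$ by the vertical lines $x=x_{g_L}$ and $x=x_{f_L}$ into three pieces.
\begin{enumerate}[(a)]
\item In the strip $x_{g_L}<x<x_{f_L}$, (i) and (ii) give $f_L<0<g_L$, so this piece contributes negatively.
\item In the outer regions $x<x_{g_L}$ and $x>x_{f_L}$, condition (iv) says $f_L/g_L$ is nonincreasing. We pair the upper and lower arcs of $\Gamma$ at equal $x$, or equivalently at equal $y$ after writing $dt=dx/(F_L-\psi)$. The factor $1/(F_L-\psi)$ has opposite signs on the two arcs, and its difference is controlled by (iii). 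Then we apply a second-mean-value (Chebyshev-type) inequality with the monotone weight $f_L/g_L$, shifted by the constant $f_L/g_L$ evaluated at the crossing points. The aim is to show that the outer contributions are also $\le 0$, with strict inequality overall.
\end{enumerate}

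We expect step (b) to be the main obstacle. The difficulty is choosing the right reference constant $c$ and using $\oint(f_L/g_L-c)\,dy$ so that the monotonicity in (iv) produces a definite sign on each outer arc, while the middle strip absorbs the remaining terms. If a direct sign argument fails, the fallback is Zhang's comparison between two hypothetical nested cycles $\Gamma_1\subset\Gamma_2$. In that version we show $\oint_{\Gamma_2}f_L\,dt<\oint_{\Gamma_1}f_L\,dt$ arc by arc, using (iv) on the outer arcs and the sign of $f_L$ in the strip. This yields the same conclusion: all cycles are hyperbolic and stable, so there is at most one.
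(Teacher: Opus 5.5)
The paper does not prove this statement; it quotes it from Zegeling. Your proposal therefore has to stand on its own, and it has a genuine gap. All the content sits in step (b), which you only describe as an aim, and the sign you use in step (a) is wrong for the intended theorem.

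\textbf{The sign in (ii).} Condition (ii) as printed is a transcription slip. Read literally, it gives $f_L(x_{g_L})<0$, hence $f_L/g_L\to+\infty$ as $x\uparrow x_{g_L}$. That is incompatible with (iv) on $(x_-,x_{g_L})$, so read literally the hypotheses are contradictory and the statement is vacuous. It would also make the equilibrium a sink, so an innermost cycle would be repelling from inside, the opposite of what you set out to prove.

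The intended hypothesis is the one the paper's own application satisfies. There $f_L=B-2Ax$, so $(x-x_{f_L})f_L(x)=-(2Ax-B)^2/(2A)<0$; the displayed computation in the corollary drops this minus sign. Thus $f_L>0$ for $x<x_{f_L}$ and $f_L<0$ for $x>x_{f_L}$, and the equilibrium is a repeller. Your step (a) then has the wrong sign: the divergence is positive on the strip $x_{g_L}<x<x_{f_L}$ and on the whole region $x<x_{g_L}$.

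Worse, no argument that assigns a nonpositive sign to each piece of a single cycle can work with the shift you propose. For every constant $c$, the integrand $f_L-c\,g_L$ equals $f_L(x_{g_L})>0$ on the line $x=x_{g_L}$, and every cycle crosses that line.

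\textbf{The fallback.} The workable route is your fallback, Zhang's comparison of nested cycles $\Gamma_1\subset\Gamma_2$, but its two essential ingredients are missing.

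First, the comparison itself. Without any shift, the strip and the region $x>x_{f_L}$ are straightforward:
\begin{itemize}
\item In the strip, pair at equal $x$. Both cycles are graphs over $x$ there, and the outer one has larger $|F_L(x)-\psi(y)|$ on both arcs, so it spends less time where $f_L>0$.
\item On $x>x_{f_L}$, pair at equal $y$. Use (iv) on the common $y$-range and $f_L/g_L\le 0$ on the extra range.
\end{itemize}
On $x<x_{g_L}$, however, $f_L>0$, and under either pairing the arcs of $\Gamma_2$ with no partner on $\Gamma_1$ contribute \emph{positively}. Repairing this with the shift $f_L-c\,g_L$ changes the sign pattern on $x>x_{f_L}$ as well. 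So $c$ and the dividing lines must be chosen jointly relative to $\Gamma_1$, with a case analysis. This is the heart of Zhang's argument, and it is absent from your proposal.

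Second, a strictly decreasing exponent $\oint f_L\,dt$ does not by itself make every cycle stable. You need the base case that the innermost cycle is attracting from inside. This is available only because the equilibrium is a repeller, i.e.\ under the corrected (ii), and it gives every other cycle a negative exponent. You then need a separate argument to exclude an innermost semistable cycle with zero exponent, for instance splitting it by a suitable rotation of the field and applying the comparison to the resulting pair. Only after these steps does your (correct) topological argument yield uniqueness.
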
	

\begin{coro} \label{coro:uniqueness}
	The system of differential equations  
	\begin{equation*}
		\dot{x} = -Ax^{2} + Bx - 1 - \ee^{v}, \quad \dot{v} = \frac{Fx - G}{x},
	\end{equation*}
	has at most one unique stable and hyperbolic limit cycle in the strip $ x_{0} < x < x_{1} $.
	
	\begin{proof}
		By performing a change of coordinates in system~\eqref{eq:2} given by $y=\ee^{v}$ and rescaling time as $t \rightarrow \frac{t}{x}$, the system can be rewritten as a generalized Li\'enard system: 
		\begin{equation*}
			\dot{x} = -Ax^{2} + Bx - 1 - \ee^{v} = F_{L}(x) - \psi(y), \quad \dot{v} = \frac{Fx - G}{x} = g_{L}(x).
		\end{equation*}
		By following Theorem~\ref{th:uniqueness}:
		
		\begin{enumerate}[(i)]
			\item $(x - x_{g_{L}}) g_{L}(x) = (x - \frac{G}{F}) \cdot \frac{Fx - G}{x} = \frac{(Fx - G)^{2}}{Fx} > 0$, for $ x \neq x_{g_{L}} $ and $ x > 0 $;
			\item $(x - x_{f_{L}}) f_{L}(x) = (x - \frac{B}{2A}) \cdot (-2Ax + B) = \frac{(2Ax - B)^{2}}{2A} > 0$, for $ x \neq x_{f_{L}} $, with $ x_{f_{L}}> x_{g_{L}} $;
			\item $ \frac{d\psi(y)}{dy} > 0 $;
			\item $ \frac{d}{dx} \left( \frac{f_{L}(x)}{g_{L}(x)} \right) = -\frac{2AFx^{2} - 4AGx + GB}{(Fx - G)^{2}} \leq 0 $ in $ x_{-} < x < x_{g_{L}} $ and $ x_{f_{L}}< x < x_{+} $.
		\end{enumerate}
		
		Thus, there is at most one stable and hyperbolic limit cycle in the strip  $0 < x < \infty$. Moreover, the stable limit cycle must be within $ x_{0} < x < x_{1}$, as it is the only feasible candidate in the strip $0 < x < \infty$. To bound the region where a limit cycle may appear, we analyse the vector field along the lines $x=x_0$ and $ x=x_1$:  
		\begin{enumerate}[(a)]
			\item At \( x = x_0 \):  
			\[
			\dot{x} = -x_0 y < 0, \quad \dot{y} = F y (x_0 - x_e) < 0
			\]
			\item At \( x = x_1 \):  
			\[
			\dot{x} = -y x_1 < 0, \quad \dot{y} = F y (x_1 - x_e) > 0
			\]
		\end{enumerate}
		Since the vector field crosses these lines in only one direction, any limit cycle must be entirely contained within the region $x_0 < x < x_1$, without crossing these boundaries.
		\end{proof}
\end{coro}
Moreover, the limit cycle can only exist in some region of the parameter space.
 \begin{tho}\label{th:BD}
 	For $\frac{x_{0}+x_{1}}{2}\leq x_e$,  system \eqref{eq:4} has no limit cycle.
 	\begin{proof}
    Given the system \eqref{eq:4} and the Dulac function $B(x, y) = x^a y^b$, we want the expression

$$
M := \frac{\partial(\dot{x} \cdot B)}{\partial x} + \frac{\partial(\dot{y} \cdot B)}{\partial y}
$$
to preserve sign. If this holds, it proves the nonexistence of periodic orbits within some subregion of the parameter space. Assuming $a = -1$, we obtain

$$
M := -2x^2 + \left(b F x_1 x_0 + F x_1 x_0 + x_0 + x_1\right) x - b F x_1 x_0 x_e - F x_1 x_0 x_e.
$$

In the parameter space $(x_e, f)$, the zeros of $M(x)$ are curves in the parameter space given by
\begin{eqnarray} 
x^{\ast} &=& \frac{(1 + F(b + 1)x_1)x_0+x_1}{4} \nonumber\\ &\pm& \frac{\sqrt{(1 + F(b + 1)x_1)^2 x_0^2 + 2(F(b + 1)x_1 + 1 - 4x_e(b + 1)F)x_1 x_0 + x_1^2}}{4}.\nonumber
\end{eqnarray}
To avoid sign changes in $M$, we want this expression to be complex-valued, i.e., the discriminant is imposed to be negative. The boundary where it vanishes defines a critical curve 

$$
F(x_e) = \frac{-x_0 - x_1 + 4x_e \pm 2 \sqrt{-2x_0 x_e - 2x_1 x_e + 4x_e^2}}{(b + 1)x_1 x_0},
$$
in the parameter space $(x_{e},F)$. The parabola controls the sign change of the divergence.
Since $F \in \mathbb{R}^+$, this implies $x_e \geq \frac{x_0 + x_1}{2}$. Qualitatively, the parabola behaves as shown in Figure~\ref{fi:1}.

\begin{figure}[h]
	\centering
	\begin{overpic}[width=0.25\textwidth]{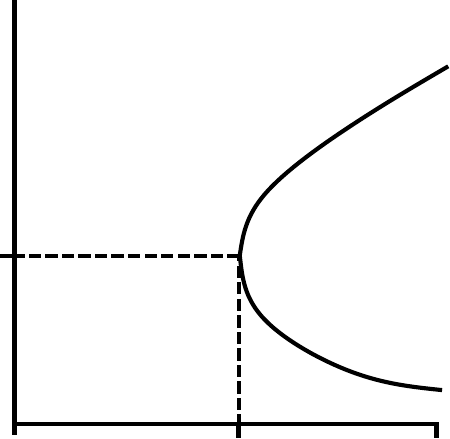}
		\put(-0,-10){$x_{0}$}
		\put(41,-10){$\frac{x_{0}+x_{1}}{2}$}
		\put(95,-10){$x_{1}$}
		\put(-45,40){$\frac{x_{0}+x_{1}}{(b+1)x_{0}x_{1}}$}
	\end{overpic}
	\vspace{0.3cm}
    	 \captionsetup{width=\textwidth}
	\caption{Critical curve $f(x_e)$ indicating change in divergence sign.}\label{fi:1}
\end{figure}

Since $b$ remains as a free parameter, we can choose arbitrary values of $b$ to cover certain regions of the parameter space. If we take values of $b \in (-1, \infty)$, then for each fixed $b$, the interior of the corresponding curve, denoted $R_b$, satisfies

$$
\bigcup R_b = \left\{(x_e, F) \in \mathbb{R}^2 \,\middle|\, \frac{x_0 + x_1}{2} < x_e \leq x_1 \text{ and } F > 0 \right\}.
$$

Also, for $x_e=\frac{x_0+x_1}{2}$ the unique limit cycle emerging at the Andronov--Hopf bifurcation has not emerged yet (see details in Section~\ref{sec:Hopf}). Therefore, no limit cycles can exist for $\frac{x_0 + x_1}{2} \leq x_e \leq x_1$
 	\end{proof}
 \end{tho}

Further on, we show that the system exhibits both a Hopf bifurcation (Section~\ref{sec:Hopf}) and a heteroclinic bifurcation (Section~\ref{sec:heteroclinic}), which together determine the parameter region where the unique limit cycle can exist. In particular, Corollary~\ref{coro:uniqueness} shows that the region of parameter space in which there is no limit cycle is larger than previously established in Theorem~\ref{th:BD}.

The Hopf bifurcation governs the local emergence or disappearance of small-amplitude periodic solutions near the coexistence equilibrium, whereas the heteroclinic bifurcation specifies the parameter values at which the limit cycle and the polycycle coincide. Furthermore, Proposition~\ref{pro:dieLC} shows that the limit cycle created at the Hopf bifurcation necessarily vanishes at the heteroclinic connection, as no alternative mechanism for its destruction exists in this parameter range. This result does not ensure that the limit cycle grows monotonically along the parameter path linking the two bifurcations, the fact that it terminates on a global structure such as the polycycle--and that its period diverges when approaching the heteroclinic connection--indicates that both the amplitude and the period of the oscillations carry valuable information about the collapse.

Although the following result has no direct ecological implications, it provides a useful mathematical characterization of the invariant curves of the system. Algebraic invariant curves are typically easier to detect and analyse and--most importantly--they can be written explicitly, which greatly facilitates numerical continuation. They also play a central role in the construction of first integrals, since algebraic invariant curves frequently serve as building blocks for Darboux-type integrability (see Chapter 8 in \cite{DumLliArt2006}). Moreover, the theorem below shows that phase portraits admitting algebraic invariant curves other than the coordinate axes form a zero-measure subset of the parameter space; hence, for generic parameter values, no such algebraic curves exist.
\begin{tho}[\cite{gasull2014non}]\label{th:trascendentallc}
Consider the planar polynomial differential system
\begin{equation}\label{eq:tracendetallc}
    \begin{cases}
    \dot{x} = x(a_0 + a_1 y + a_2 y^2), \\
    \dot{y} = y(x + b_0 + b_1 y + b_2 y^2 + b_3 y^3),
    \end{cases}
\end{equation}
where $a_0 \neq 0$ and $\dfrac{b_0}{a_0} \notin \mathbb{Q}^+$. Then the only invariant algebraic curves of the system are the coordinate axes $x = 0$ and $y = 0$. In particular, all limit cycles of the system are non-algebraic.
\end{tho}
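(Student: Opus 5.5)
The plan is to follow the standard Darboux-polynomial route for Kolmogorov-type systems. An invariant algebraic curve $h(x,y)=0$ of \eqref{eq:tracendetallc} is one whose polynomial $h$ satisfies
\[
x(a_0+a_1y+a_2y^2)\,h_x + y(x+b_0+b_1y+b_2y^2+b_3y^3)\,h_y = K\,h
\]
for some cofactor $K(x,y)$. Comparing degrees (the vector field has degree $4$) forces $\deg K\le 3$. Matching the top-degree terms shows that, in fact, $K=k_{00}+k_{10}x+k_{01}y+k_{02}y^2+k_{03}y^3$: no monomials $xy$, $x^2$, and so on survive.

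We may factor out powers of $x$ and $y$, since the axes are invariant with cofactors $a_0+a_1y+a_2y^2$ and $x+b_0+\dots$. So it suffices to show there is no irreducible $h$ with $h(0,y)\not\equiv0$ and $h(x,0)\not\equiv0$.

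The first step restricts to $y=0$. Write $h(x,0)=\sum_i c_ix^i\not\equiv 0$. Setting $y=0$ in the identity gives
\[
a_0x\,h_x(x,0)=(k_{00}+k_{10}x)\,h(x,0).
\]
Comparing the top degree gives $k_{10}=0$, and comparing the lowest nonzero monomial $c_mx^m$ gives $k_{00}=ma_0$ with $m\ge 0$ an integer. Moreover $a_0x h_x=k_{00}h$ forces $h(x,0)=c_mx^m$. If $m\ge1$, then $x\mid h(x,0)$; this is not a contradiction by itself, so we keep $m\ge 0$.

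The second step restricts to $x=0$. Write $h(0,y)=\sum_j d_jy^j\not\equiv0$. Then
\[
y(b_0+b_1y+b_2y^2+b_3y^3)\,h_y(0,y)=(k_{00}+k_{01}y+k_{02}y^2+k_{03}y^3)\,h(0,y).
\]
Taking the lowest-order term $d_ny^n$ gives $k_{00}=nb_0$. Combined with the first step, this yields $ma_0=nb_0$ with $m,n\ge0$.

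If $n\ge1$ and $m\ge 1$, then $b_0/a_0=m/n\in\mathbb{Q}^+$, which the hypothesis excludes. So $m=0$ or $n=0$. Since $a_0\ne0$, $m=0$ forces $nb_0=0$; if $b_0\neq0$ this gives $n=0$, and then both $m=n=0$ and $k_{00}=0$. The case $b_0=0$ needs separate care. There $b_0/a_0=0\notin\mathbb{Q}^+$, so the hypothesis still holds, and the condition $ma_0=0$ forces $m=0$ while leaving $n$ free. We then need extra input, obtained by the same lowest-order matching in $y$ at fixed $x$ in the third step.

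The third step is the main obstacle: excluding $k_{00}=0$, meaning $h(0,0)\ne0$ and $K(0,0)=0$. I would expand $h=\sum_{i}h_i(y)x^i$ and compare powers of $x$. The coefficient of $x^i$ gives
\[
i(a_0+a_1y+a_2y^2)h_i + y(b_0+\dots)h_i' + y h_{i-1}' = (k_{01}y+k_{02}y^2+k_{03}y^3)h_i + k_{10}h_{i-1}.
\]
Here $k_{10}=0$. At the top index $N$ (with $h_{N+1}=0$), $h_N$ satisfies a first-order linear ODE. Its polynomial solutions exist only if $Na_0$ equals $b_0$ times the $y$-valuation of $h_N$, which is again a resonance of the same type. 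This forces $N=0$, so $h=h(y)$. But then $h(y)$ is invariant under a field with $\dot x\ne0$ generically, so $h$ is a product of the lines $y=c$. Invariance requires $c(x+b_0+\dots)=0$ for all $x$, hence $c=0$. So only the axes remain.

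Finally, non-algebraicity of limit cycles follows at once. A limit cycle lying in an invariant algebraic curve would give an invariant curve other than the axes, and a limit cycle cannot lie on the axes.
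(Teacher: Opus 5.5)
The paper gives no proof of this statement; it quotes it from \cite{gasull2014non}, so your argument has to stand on its own. Your first two steps are sound. Comparing the top power of $x$ gives $\deg_x K\le 1$. Restricting to $y=0$ gives $k_{10}=0$, $h(x,0)=c_mx^m$ and $k_{00}=ma_0$. Restricting to $x=0$ gives $k_{00}=nb_0$. The hypothesis then yields $m=0$ and $k_{00}=0$. The separate worry about $b_0=0$ is unnecessary: $m=0$ means $h(0,0)=c_0\neq0$, so $n=0$ automatically.

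The gap is in the third step, which you yourself flag as the main obstacle. Write $p=a_0+a_1y+a_2y^2$ and $q=b_0+b_1y+b_2y^2+b_3y^3$. The coefficient of $x^N$ in the invariance identity is
\[
N\,p\,h_N+y\,q\,h_N'+y\,h_{N-1}'=K_0(y)\,h_N .
\]
This is not an ODE for $h_N$ alone, because of the term $y\,h_{N-1}'$. Even if that term were absent, the balance at the lowest power $y^v$ of $h_N$ would be $Na_0+vb_0=k_{00}=0$, not $Na_0=vb_0$. That is, $b_0/a_0=-N/v$, a \emph{negative} rational, which the hypothesis $b_0/a_0\notin\mathbb{Q}^+$ does not exclude. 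So the ``resonance of the same type'' you invoke does not force $N=0$.

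The missing ingredient is the coefficient of $x^{N+1}$, namely $y\,h_N'=k_{10}h_N$. This is also what actually justifies your claim that $K$ contains no $xy^j$ terms. It gives $h_N=c\,y^{s}$ with $s=k_{10}$, and since $k_{10}=0$, $h_N$ is a nonzero constant. Then $N=0$ follows in either of two ways. Evaluating the displayed $x^N$-equation at $y=0$ gives $Na_0h_N=k_{00}h_N=0$. Alternatively, $h(x,0)\equiv c_0$ forces $h_i(0)=0$ for all $i\ge1$, which is incompatible with a nonzero constant $h_N$ unless $N=0$. With $h=h_0(y)$, the coefficient of $x$ in $y\,(x+q)\,h_0'=K_0h_0$ is $y\,h_0'=0$, so $h$ is constant. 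This contradiction completes the proof, and your concluding remark about limit cycles is then correct.
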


\begin{coro}
Let $\frac{1 - B}{G} \notin \mathbb{Q}^+, \quad \text{and} \quad G \neq 0.$
Then, there exists a change of variables and a time reversal such that the system \eqref{eq:2} transforms into system~\eqref{eq:tracendetallc}. Consequently, the only invariant algebraic curves are $x = 0$ and $y = 0$, and the only limit cycle is transcendental.
\end{coro}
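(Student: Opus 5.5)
The plan is to exhibit an explicit linear change of variables, swapping the roles of the two coordinates and possibly reversing time, that brings system~\eqref{eq:2} into the form~\eqref{eq:tracendetallc}. We then check the hypotheses of Theorem~\ref{th:trascendentallc} and transport its conclusion back through the change of variables.

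The first step is to write \eqref{eq:2} in Kolmogorov form,
\[
\dot x = x(-1+Bx-Ax^2-y),\qquad \dot y = y(-G+Fx).
\]
In \eqref{eq:tracendetallc} the first equation is linear in the factor multiplying $x$ and involves only the other variable. The second factor is linear in $x$ with coefficient $1$ and polynomial (up to cubic) in $y$. So I set $X=\sigma y$ and $Y=x$, and allow $t\mapsto -t$ if needed. Without time reversal, $X=-y$ gives
\[
\dot X = X(-G+FY),\qquad \dot Y = Y(X-1+BY-AY^2).
\]
This is \eqref{eq:tracendetallc} with $a_0=-G$, $a_1=F$, $a_2=0$, $b_0=-1$, $b_1=B$, $b_2=-A$, $b_3=0$. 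With time reversal, $X=y$ gives the same form with $a_0=G$ and $b_0=1$. In both cases $a_0\neq 0$ is exactly the hypothesis $G\neq 0$.

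The step I expect to be the main obstacle is matching the arithmetic condition. The naive normalization gives $b_0/a_0=1/G$, whereas the statement assumes $(1-B)/G\notin\mathbb{Q}^+$. I would first recheck the paper's intended normalization: a different scaling of $x$ or $y$, or use of \eqref{eq:4} rather than \eqref{eq:2}, could change which constant ends up in the $b_0$ slot. Pure scalings $X=\sigma y$, $Y=\kappa x$ and time rescalings leave $b_0/a_0$ invariant. Translations of $Y$ would destroy the invariance of $Y=0$, and so are not admissible. If no admissible normalization produces $(1-B)/G$, I would state the corollary under the condition actually produced, $1/G\notin\mathbb{Q}^+$, and record the discrepancy explicitly.

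Once the form is matched, Theorem~\ref{th:trascendentallc} says the only invariant algebraic curves of the transformed system are $X=0$ and $Y=0$. The change of variables is linear, and time reversal does not alter invariant curves, so it maps invariant algebraic curves bijectively to invariant algebraic curves. Hence the only ones for \eqref{eq:2} are $x=0$ and $y=0$. Finally, suppose the limit cycle, which is unique by Corollary~\ref{coro:uniqueness}, were algebraic. It lies in the open first quadrant around $P_2^*$, so its Zariski closure would be an invariant algebraic curve different from the axes, which is a contradiction. Therefore the limit cycle is transcendental.
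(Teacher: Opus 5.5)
Your argument is the paper's: swap the coordinates and reverse time to get $\dot{\bar x}=\bar x(G-F\bar y)$, $\dot{\bar y}=\bar y(\bar x+1-B\bar y+A\bar y^2)$, read off $a_0=G$ and $b_0=1$, and invoke Theorem~\ref{th:trascendentallc}. The discrepancy you flag is real, and the paper's proof has it too. That proof ends with $b_0/a_0=1/G$ and simply asserts $1/G\notin\mathbb{Q}^+$, which does not follow from $(1-B)/G\notin\mathbb{Q}^+$ (e.g.\ $G=1$, $B>1$; in the ecological regime $B>4$, so the stated hypothesis is vacuous there). Since $b_0/a_0$ is the eigenvalue ratio $(-1)/(-G)$ at the origin, no admissible normalization can change it, so the hypothesis actually needed is $1/G\notin\mathbb{Q}^+$, as you propose.
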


\begin{proof}
Applying the change $x=\bar y,\; y=\bar x$ and the time reversal $s=-t$,
system~\eqref{eq:2} becomes
$$
\dot{\bar x}=\bar x(G-F\bar y),\qquad
\dot{\bar y}=\bar y(\bar x+1-B\bar y+A\bar y^2).
$$
Renaming variables gives a system of the form \eqref{eq:tracendetallc} with
$a_0=G\neq 0$ and ${b_0/a_0=1/G\notin\mathbb{Q}^+}$.  
Hence Theorem~\ref{th:trascendentallc} applies: the only invariant algebraic
curves are $x=0$ and $y=0$, and the unique limit cycle is transcendental.
\end{proof}

\subsection{Rotated vector field}\label{subsec:rvfmodel}

Using the theory of rotated vector fields detailed in Appendix~\ref{sec:rvf}, we can ensure the monotonic movement of the invariant manifolds with respect to $x_{e}$. In general, this framework is useful for tracking the evolution of objects, such as limit cycles, within a vector field as a parameter varies. In our case, it serves to characterize the heteroclinic bifurcation curve and conclude that it is expressible as $x_{e}(F)$.

By inspection of the nullclines, the heteroclinic connection between 
$W^u(x_0,0)$ and $W^s(x_1,0)$ lies within the region
$$
\Omega = \{(x,y)\in(\mathbb{R}^{2}_{+}\mid 
y>-\tfrac{(x-x_{0})(x-x_{1})}{x_{0}x_{1}}\}.
$$
Hence, we can restrict the application of the rotated vector fields theory to this region. 

By computing the determinant
\begin{equation*}
    \Theta_{x_e}(x,y)
=
\begin{vmatrix}
P(x,y) & Q(x,y)\\[2pt]
\partial_{x_e}P(x,y) & \partial_{x_e}Q(x,y)
\end{vmatrix}
= -F y\,P(x,y)
> 0
\quad \text{for } (x,y)\in\Omega,
\end{equation*}
we see that the sign is preserved in $\Omega$. Hence, as $x_e$ increases, the vector field rotates monotonically counter-clockwise in $\Omega$, and the rotated vector fields theory applies in $\Omega$ with $x_e$ as the rotatory parameter.

\begin{remark}
    As detailed in Appendix~\ref{sec:rvf}, the rotated vector field theory applies to one-parameter families of vector fields in which the equilibria remain fixed as the parameter varies. 
    In our case, the rotatory parameter is $x_e$, so the coexistence equilibrium $(x_{e}, y_{e})$ moves when $x_e$ changes. 
    However, this displacement occurs outside the region $\Omega$, and therefore the rotated vector field theory remains applicable.
\end{remark}

\begin{pro}\label{pro:xe_f}
The heteroclinic bifurcation curve in $(x_{e},F)$-space is the graph of a
function $x_e^h(F)$.
\end{pro}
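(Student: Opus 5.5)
Fix $F>0$ (together with $x_0<x_1$) and let $x_e$ vary in $(x_0,x_1)$. The plan is to show that there is exactly one value $x_e^h(F)$ at which the unstable manifold $W^u(x_0,0)$ coincides with the stable manifold $W^s(x_1,0)$. That value then defines the function whose graph is the heteroclinic curve.

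\emph{Splitting function.} Choose a transversal section $\Sigma$ inside $\Omega$, for instance a vertical segment $x=\bar x$ with $x_0<\bar x<x_1$ lying above the parabola $y=-(x-x_0)(x-x_1)/(x_0x_1)$. Near $(x_0,0)$ the branch of $W^u(x_0,0)$ entering the open quadrant starts tangent to the eigenvector $v_-^0$ computed earlier. More precisely, for the connection we want the branch of $W^u(x_0,0)$ that leaves the saddle into $y>0$ and travels toward larger $x$ inside $\Omega$; the eigenvectors identify which of the two $x$-axis/interior branches is the relevant one. By the nullcline inspection already recorded, this branch, and the branch of $W^s(x_1,0)$ arriving from $y>0$, both stay in $\Omega$ until they meet $\Sigma$. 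Let $y^u(x_e)$ and $y^s(x_e)$ be the first intersection heights with $\Sigma$, and set $d(x_e)=y^u(x_e)-y^s(x_e)$. Since the saddles are hyperbolic for every $x_e\in(x_0,x_1)$ and invariant manifolds depend continuously (indeed analytically) on parameters, $d$ is continuous.

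\emph{Monotonicity.} By the determinant computation above, $\Theta_{x_e}=-FyP$ has a fixed sign in $\Omega$, so the field rotates monotonically in $\Omega$ as $x_e$ increases. The saddles $(x_0,0)$ and $(x_1,0)$ do not move, and the moving equilibrium $(x_e,y_e)$ stays outside $\Omega$, as noted in the remark. The standard comparison result for separatrices of rotated vector fields (Appendix~\ref{sec:rvf}) then applies. As $x_e$ increases, the unstable separatrix from one fixed saddle and the stable separatrix into the other rotate in opposite directions relative to $\Sigma$. Hence $y^u$ is strictly monotone in one direction and $y^s$ strictly monotone in the other, so $d$ is strictly monotone. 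The argument is a trajectory-crossing one: a separatrix for parameter $x_e'$ crosses the orbits of the field at parameter $x_e$ in only one direction, so it cannot re-cross the old separatrix. Strict monotonicity gives at most one zero of $d$ for each $F$.

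\emph{Existence and graph property.} Next I would show that $d$ changes sign on $(x_0,x_1)$, or at least on the range where a connection is possible.

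For $x_e$ close to $x_0$, the saddle at $x_0$ is nearly non-hyperbolic in the $y$-direction, because $\lambda_-^0\to0$. The coexistence point then approaches $(x_0,0)$ and is an unstable node or focus, since $\tau>0$ there. I expect $W^u(x_0,0)$ to pass above $W^s(x_1,0)$ in this regime.

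For $x_e\ge (x_0+x_1)/2$, Theorem~\ref{th:BD} excludes limit cycles and $P_2^*$ is stable. The branch of $W^s(x_1,0)$ must then originate either at $P_2^*$ or from infinity, which forces the opposite order of the two manifolds on $\Sigma$.

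The intermediate value theorem then gives a zero $x_e^h(F)$. Uniqueness of this zero makes $x_e^h$ a well-defined function, and the set of $(x_e,F)$ with a heteroclinic connection is exactly its graph. Continuity in $F$ follows from the implicit-function-type argument: $d$ depends continuously on $(x_e,F)$ and is strictly monotone in $x_e$.

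\emph{Main obstacle.} The delicate step is the sign change of $d$ at the two ends, and in particular justifying that the stable manifold of $(x_1,0)$ actually reaches $\Sigma$ while staying inside $\Omega$. For this I would track the relative position of each separatrix with respect to the nullcline $y=-(x-x_0)(x-x_1)/(x_0x_1)$ and the line $x=x_e$, together with the boundedness lemma. If the end-point signs turn out to be hard to establish, the fallback is weaker: the statement only needs \emph{at most one} $x_e$ per $F$, and that follows from the rotation argument alone.
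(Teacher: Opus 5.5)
Your uniqueness mechanism is the paper's. Its entire proof is that $\Theta_{x_e}=-FyP$ has constant sign in $\Omega$, so the separatrices move monotonically with $x_e$ and at most one $x_e$ per $F$ can give a connection. Your ``fallback'' is exactly this, and your existence/IVT part goes beyond what the paper proves here.

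However, you have the separatrices swapped. The paper makes the same slip in Section~\ref{subsec:rvfmodel} but uses the correct ones later. At $(x_0,0)$ the unstable eigenvector is $(1,0)$, so $W^u(x_0,0)$ lies on the invariant axis, and $v_-^0$ is the \emph{stable} direction. At $(x_1,0)$ the interior direction $v_+^1$ is the unstable one. The nontrivial connection therefore runs from the interior branch of $W^u(x_1,0)$ to that of $W^s(x_0,0)$, toward \emph{decreasing} $x$, since $\dot x=x\bigl(y_0(x)-y\bigr)<0$ in $\Omega$. No orbit in $\Omega$ travels toward larger $x$, and $W^u(x_0,0)$ and $W^s(x_1,0)$ meet only along the axis segment. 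Your endpoint heuristics inherit the error: a stable separatrix cannot originate at the stable point $P_2^*$; it is $W^u(x_1,0)$ that ends there.

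The parameter-independent section $x=\bar x$ also does not work as claimed. On the nullcline $y=y_0(x)$ the flow is vertical, pointing up for $x>x_e$ and down for $x<x_e$. So the nullcline inspection keeps the forward branch from $(x_1,0)$ in $\Omega$ only while $x>x_e$, and the backward branch into $(x_0,0)$ only while $x<x_e$.
\begin{itemize}
\item If $\bar x$ lies on the wrong side of $x_e$, an arc can leave $\Omega$ before reaching $\Sigma$. There $\Theta_{x_e}<0$, so the rotation comparison no longer applies.
\item For $x_e$ near $x_1$ (resp.\ $x_0$), the relevant branch of $W^u(x_1,0)$ (resp.\ $W^s(x_0,0)$) tends to (resp.\ comes from) the node $P_2^*$ and never meets $\Sigma$.
\end{itemize}
Thus $d$ is not defined on all of $(x_0,x_1)$, and the intermediate value step fails as written; its endpoint signs are in any case only conjectured.

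The statement needs only uniqueness, and you can get it without any section by using the connection itself as a barrier. Suppose $x_e^{(1)}<x_e^{(2)}$ both gave connections. The orbit $\Gamma_2$ for $x_e^{(2)}$ is a graph $y=\gamma(x)$ lying in $\Omega$. Along the $x_e^{(1)}$-flow, on $\Gamma_2$,
$$\frac{d}{dt}\bigl(y-\gamma(x)\bigr)=Fy\,(x_e^{(2)}-x_e^{(1)})>0,$$
so orbits cross $\Gamma_2$ only upward. The slopes of the interior eigendirections are $-\frac{Fx_0(x_1-x_e)+x_1-x_0}{x_0x_1}$ for $v_+^1$ and $\frac{Fx_1(x_e-x_0)+x_1-x_0}{x_0x_1}$ for $v_-^0$. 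They show that, for $x_e^{(1)}$, the branch of $W^u(x_1,0)$ starts above $\Gamma_2$ and the branch of $W^s(x_0,0)$ ends below it. Hence the former stays outside the region bounded by $\Gamma_2$ and the axis, and the latter stays inside it in backward time, so they cannot coincide.
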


\begin{proof}
The constant sign of $\Theta_{x_e}$ in $\Omega$ implies that the invariant
manifolds vary monotonically with $x_e$, so for each $F$, there is at most one
$x_e$ producing the heteroclinic connection; hence, the curve is a graph.
\end{proof}

\subsection{Global behaviour of the phase portrait}
To identify the limit sets of the phase portrait of system~\eqref{eq:4}, we use
the Poincar\'e compactification (see Appendix~\ref{sec:PC}). This procedure
embeds any planar polynomial differential system into the Poincar\'e sphere,
turning the behaviour at infinity into dynamics on its equator and yielding a
compact phase space. In this setting the Poincar\'e--Bendixson Theorem applies,
allowing a complete description of the possible limit sets. The analysis is
performed through the local charts that cover the sphere, in which the vector
fields take the following form:

\begin{enumerate}[(a)]
	\item Local chart $(U_1,\phi_1):$
	\begin{equation*}
		\begin{aligned}
			\dot{u} &= -\frac{u\left(v\left(1+\left(F v x_e - F - u - v\right)x_1\right)x_0 + x_1 v - 1\right)}{x_0 x_1},\\
			\dot{v} &= \frac{(1 + v^{2} x_0 x_1 + (u x_0 x_1 - x_0 - x_1)v)\,v}{x_0 x_1}.
		\end{aligned}
	\end{equation*}

	\item Local chart $(U_2,\phi_2):$
	\begin{equation*}
		\begin{aligned}
			\dot{u} &= -\frac{u\big(v\big((1+v+F u - F v x_e)x_1 - u\big)x_0 + u(u - x_1 v)\big)}{x_0 x_1},\\
			\dot{v} &= -v^{2} F(-x_e v + u).
		\end{aligned}
	\end{equation*}

	\item Local chart $(U_3,\phi_3):$
	\begin{equation*}
		\begin{aligned}
			\dot{u} &= -\frac{u^{3}}{x_0 x_1} + \frac{(x_0 + x_1)u^{2}}{x_0 x_1} - u v - u,\\
			\dot{v} &= F u v - F x_e v.
		\end{aligned}
	\end{equation*}
\end{enumerate}

Chart $(U_1,\phi_1)$ blows up the direction $(\infty,0)$ to the origin, and
chart $(U_2,\phi_2)$ does the same for the direction $(0,\infty)$. For a detailed account of the analysis of singular points at infinity,
\cite{DizPitaBlowUpsInfinity} is a useful reference, particularly in
the context of predator-prey systems.

\begin{pro}
The origin of chart $(U_1,\phi_1)$ is a repellor.
\end{pro}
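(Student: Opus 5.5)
Linearize the chart-$(U_1,\phi_1)$ vector field at $(u,v)=(0,0)$ and read off the eigenvalues. Expand the two components to first order. In $\dot u$, the bracket equals $v\bigl(1+(Fvx_e-F-u-v)x_1\bigr)x_0 + x_1 v - 1$, which at the origin takes the value $-1$. Hence
\[
\dot u = \frac{u}{x_0x_1} + \text{h.o.t.}
\]
In $\dot v$, the factor $1+v^2x_0x_1+(ux_0x_1-x_0-x_1)v$ equals $1$ at the origin, so
\[
\dot v = \frac{v}{x_0x_1} + \text{h.o.t.}
\]

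The Jacobian at the origin is therefore
\[
J(0,0)=\frac{1}{x_0x_1}\begin{pmatrix}1&0\\ 0&1\end{pmatrix}.
\]
It has the double positive eigenvalue $1/(x_0x_1)>0$, since $x_0,x_1>0$. The origin is thus a hyperbolic equilibrium with both eigenvalues positive. By the Hartman--Grobman Theorem it is topologically an unstable node (a star node at the linear level), i.e.\ a repellor. No blow-up is needed because the singularity is hyperbolic, not degenerate.

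It remains to interpret this on the Poincaré disk. The chart $U_1$ is taken with the usual time rescaling by $v^{d-1}=v^2$, the degree being $3$. Since the degree is odd, the flow in the antipodal chart $V_1$ carries the opposite sign, but we only need $U_1$. Within $U_1$, the line $v=0$ is the equator and $v>0$ corresponds to the finite first-quadrant region near the direction $(\infty,0)$. Orbits leave the origin of $U_1$ into $v>0$. In the original variables this means the infinite point in the positive $x$-direction repels finite orbits, which is consistent with the boundedness lemma: trajectories with large $x$ decrease in $x$.

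The only real obstacle is bookkeeping: checking that the sign conventions of the displayed chart (orientation after the $v^2$ rescaling) match, so that positive eigenvalues indeed mean repelling toward the finite part. I would verify this quickly on the $u=0$ axis. There $\dot v = v(1-(x_0+x_1)v+x_0x_1v^2)/(x_0x_1) = v(1-x_0v)(1-x_1v)/(x_0x_1)$. This is positive for small $v>0$, and its zeros at $v=1/x_0$ and $v=1/x_1$ recover the finite equilibria $P^*_{1-}$ and $P^*_{1+}$, confirming the chart's orientation.
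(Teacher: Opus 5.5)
Your argument is correct and is exactly the paper's proof: the linear part at the origin of $U_1$ is $\frac{1}{x_0x_1}$ times the identity, so the origin is hyperbolic with a double positive eigenvalue, and Hartman--Grobman makes it a repellor (your check along the invariant line $u=0$ is a useful confirmation of the chart's orientation). One slip in your aside, which plays no role in the statement: the $V_1$ expression equals the $U_1$ expression multiplied by $(-1)^{d-1}$, so for the odd degree $d=3$ the two have the same sign rather than opposite signs, and the antipodal point $(-\infty,0)$ is also a repellor.
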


\begin{proof}
The Jacobian matrix at $(u,v)=(0,0)$ is
$$
J(0,0)=
\begin{pmatrix}
\frac{1}{x_0 x_1} & 0\\[3pt]
0 & \frac{1}{x_0 x_1}
\end{pmatrix},
$$
which is hyperbolic with both eigenvalues positive.  
By the Hartman--Grobman Theorem the origin is a repellor.
\end{proof}

\begin{pro}
The origin of chart $(U_2,\phi_2)$ is a saddle-node.
\end{pro}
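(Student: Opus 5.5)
The linear part at the origin of chart $(U_2,\phi_2)$ vanishes identically, so Hartman--Grobman is not available and the plan is to desingularize the point by blow-up. Expanding the chart expressions, the lowest-order terms are
\[
\dot u=-uv+\mathcal{O}(3),\qquad \dot v=-Fuv^{2}+Fx_e v^{3},
\]
where the cubic part of $\dot u$ is
\[
-\tfrac{u^{3}}{x_0x_1}+\big(\tfrac{1}{x_0}+\tfrac{1}{x_1}-F\big)u^{2}v-(1-Fx_e)uv^{2}.
\]
I will first record the invariant lines. Both $u=0$ and $v=0$ are invariant. On $v=0$ we have $\dot u=-u^{3}/(x_0x_1)$, which attracts toward the origin from both sides. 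On $u=0$ we have $\dot v=Fx_e v^{3}$, which repels for $v>0$ and attracts for $v<0$. This already suggests a point with one attracting (parabolic-type) direction and one repelling separatrix on the $v$-axis, which is the saddle-node picture, but this must be confirmed.

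Next I will perform homogeneous directional blow-ups. In the $u$-direction, set $v=wu$ and divide time by $u$. The leading part gives $\dot u=-u^{2}w+\dots$ and a $w$-equation whose equilibria on the exceptional divisor $u=0$ are determined by the quadratic/cubic jets. In the $v$-direction, set $u=zv$ and divide by $v$. The $z$-equation on $v=0$ then begins with $\dot z=-z+\dots$, which makes the endpoint $z=0$ (the $v$-axis direction) a semi-hyperbolic point.

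For each singular point on the divisor I will compute the Jacobian and classify it, using hyperbolicity where possible and the semi-hyperbolic classification theorem (Theorem 2.19 in \cite{DumLliArt2006}) where one eigenvalue vanishes. The $w=0$ endpoint (the $u$-axis direction) is expected to be degenerate, because the $u$-axis carries only cubic dynamics. There I will either apply a further blow-up, or switch to a quasi-homogeneous blow-up with weights adapted to the Newton polygon of $(-uv,\,-u^{3}/(x_0x_1))$, namely $u=r\bar u$, $v=r^{2}\bar v$.

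The main obstacle is this degenerate direction: a homogeneous blow-up will probably produce another nonelementary point, so I would try the weighted blow-up $(1,2)$ first. Its principal part is $(-\bar u\bar v-\bar u^{3}/(x_0x_1),\,0)$, which should give only elementary singularities. Finally I will blow down, gluing the sectors found in each chart while respecting the orientation changes introduced by the time rescalings (division by $u$ or $v$ reverses orientation where they are negative). I expect to obtain two hyperbolic sectors separated by the invariant $v$-axis together with one parabolic sector, which is exactly the phase portrait of a saddle-node. This is consistent with the index computed from the other infinite points via the Poincaré--Hopf theorem on the sphere, which I will use as a check.
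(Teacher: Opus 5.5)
Your desingularization plan is the right kind of argument. In fact it is more careful than the paper's proof, which blows up only in the chart $u=vw$ and classifies the semi-hyperbolic point there ($\dot w=-w+\dots$, and $\dot v=Fx_e v^2$ on $w=0$) with Theorem~2.19 of \cite{DumLliArt2006}. That proof never treats the $u$-axis direction where, as you anticipate, the homogeneous blow-up produces another point with zero linear part. The genuine gap in your proposal is the outcome you expect from the computation.

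\textbf{The sign slip.} On $u=0$ one has $\dot v=Fx_e v^3$, which \emph{repels} on both sides of the origin. There is no attracting direction along the negative $v$-axis, and your heuristic saddle-node picture rests on this slip.

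\textbf{What the $(1,2)$ blow-up actually gives.} Write $\kappa:=1/(x_0x_1)$.
\begin{itemize}
\item In the chart $v=u^2s$ (time divided by $u^2$) one gets $\dot u=u(-s-\kappa+\mathcal O(u))$ and $\dot s=2s(s+\kappa)+\mathcal O(u)$. So there are hyperbolic saddles at $s=0$, with eigenvalues $-\kappa,2\kappa$, one for each direction $\pm u$. There is also a semi-hyperbolic point at $s=-\kappa$ whose center manifold reduces to $\dot u=\tfrac{F\kappa}{2}u^2+\mathcal O(u^3)$.
\item In the charts $u=rz$, $v=\pm r^2$ one gets $\dot z=\mp z(1\pm\kappa z^2)+\mathcal O(r)$, and $\dot r=\tfrac{Fx_e}{2}r^3$ on the invariant line $z=0$. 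This is a saddle in the $+v$ direction and an unstable node in the $-v$ direction.
\end{itemize}

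\textbf{Blowing down.} The half $v\ge0$ does give two hyperbolic sectors separated by the outgoing positive $v$-axis. The half $v<0$, however, gives:
\begin{itemize}
\item a third hyperbolic sector, between the positive $u$-axis and the outgoing branch $v\simeq-\kappa u^2$, $u>0$;
\item a repelling parabolic sector around the negative $v$-axis;
\item an \emph{elliptic} sector, made of orbits that leave tangent to the negative $v$-axis on the side $u<0$ and return along $v\simeq-\kappa u^2$, $u<0$;
\item an attracting parabolic sector next to the negative $u$-axis.
\end{itemize}

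\textbf{Direct check of the elliptic sector.} Since $d=3$ is odd, the half $v<0$ reproduces (through the chart $V_2$) the flow near the point at infinity in the direction of the negative $y$-axis. Take an orbit through $(\delta,-Y)$ with $0<\delta\ll1\ll Y$. It satisfies $\dot x\approx Yx$. Backward in time it therefore tends to $(0^+,-\infty)$, because $\dot y=Fy(x-x_e)>0$ there. Forward in time it reaches the nullcline $x\approx\sqrt{x_0x_1Y}$ with $y$ almost unchanged, after which $\dot y<0$ returns it to $y=-\infty$. Throughout, $x/|y|=\mathcal O(Y^{-1/2})$, so the orbit stays in a small neighbourhood of that point at infinity.

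\textbf{Why your index check does not catch this.} The configuration above has index $1+(1-3)/2=0$, the same as a saddle-node, so the Poincar\'e--Hopf check is blind to the difference.

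\textbf{What can be salvaged.} Your final gluing step cannot produce a saddle-node on a full neighbourhood of the point on the sphere. Likewise, a saddle-node appearing on the exceptional divisor, as in the paper's argument, does not make the blown-down point one. What your method does establish is the half-neighbourhood picture in the closed Poincar\'e disc: two hyperbolic sectors bounded by the equator. That is all the phase portraits use, and it is what the paper's phrase ``hyperbolic sector in the first quadrant'' refers to. The conclusion should be stated in that restricted form rather than as ``saddle-node''.
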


\begin{proof}
The Jacobian matrix at $(u,v)=(0,0)$ is
$$
J(0,0)=
\begin{pmatrix}
0 & 0\\[3pt]
0 & 0
\end{pmatrix},
$$
so the equilibrium is non-elementary.  
To desingularize it, apply the directional blow-up $u = v w$, obtaining
$$
\begin{aligned}
\dot{w} &=-\frac{\big((v+1)x_1 - v w\big)x_0 + v w (w - x_1)}{x_0 x_1}\, w
,\\[4pt]
\dot{v} &= -v^{2} F (w-x_e).
\end{aligned}
$$
This non-elementary equilibrium is a saddle-node, with a hyperbolic sector in the first quadrant of the compactified vector field. This can be seen using classical classification of equilibria results. See, for example, Theorem~2.19 in \cite{DumLliArt2006}.
\end{proof}

\subsection{Local bifurcations}
In the model we encounter the following local bifurcations:
\begin{enumerate}[(a)]
	\item Saddle-node at $x_{0}=x_1=x_{e}$.
	\item Transcritical at $x_{e}=x_1$ and $x_{e}=x_0$.
	\item Andronov--Hopf at $x_{e}=\frac{x_{0}+x_{1}}{2}$.
\end{enumerate}
\subsubsection{Saddle-node bifurcation}\label{subsubsec:saddlenode}

The saddle-node bifurcation corresponds to the collision and annihilation of two
equilibria with different stability. In system~\eqref{eq:4}, this occurs along the axis
$y=0$, where the dynamics reduce to
$$
\dot{x}
= -\frac{x^3}{x_0x_1}
  +\frac{(x_0+x_1)x^2}{x_0x_1}
  - x,
\qquad
\dot{y}=0.
$$
The equilibrium points on this axis are $(x_0,0)$ and $(x_1,0)$, which always remain
in $y=0$. Their stability follows from
$$
P'(x_0)=1-\frac{x_0}{x_1}>0,
\qquad
P'(x_1)=1-\frac{x_1}{x_0}<0,
$$
so $x_0$ is unstable and $x_1$ is stable whenever $x_0<x_1$.

\medskip
In terms of the original parameters $(\alpha,\varepsilon,D)$, the vegetation
equilibria of system~\eqref{eq:3} are
$$
x_{0,1}
=\frac{1-D \mp \sqrt{(1-D)^2 - 4\varepsilon/\alpha}}{2},
$$
as given in~\eqref{eq:equilibria-original-parameters}.  
They are real and distinct precisely when
$$
(1-D)^2 - \frac{4\varepsilon}{\alpha} > 0
\quad\text{ if and only if }\quad
0 < D < 1 - 2\sqrt{\varepsilon/\alpha},
$$
which defines the \emph{generic regime} where the system possesses the coexistence equilibria.

\subsubsection{Andronov--Hopf bifurcation}\label{sec:Hopf}
 An Andronov--Hopf bifurcation consists in a change of stability of an equilibrium point, induced by a pair of complex conjugate eigenvalues crossing the imaginary axis when at least one small-amplitude limit cycle appears.

\begin{tho}\label{hopf} 
	The equilibrium $P_{2}^{*}$ of system \eqref{eq:4} undergoes a supercritical Hopf bifurcation at $x_{e}=\frac{x_{0}+x_{1}}{2}$. For $x_{e}<\frac{x_{0}+x_{1}}{2}$ it unfolds a limit cycle of small amplitude from $P_{2}^{*}$. Moreover, at $x_{e}=\frac{x_{0}+x_{1}}{2}$ there is a weak focus of order one. 
	\begin{proof}
		To establish the Hopf bifurcation, we verify the following conditions following \cite{kuznetsov2008elements}:  

First the vanishing trace condition: The trace of the linearized system at $P_{2}^{*}$ is
			$$\tau(x_{e},y_{e})=\frac{x_e (x_0 + x_1 - 2x_e)}{x_1 x_0} \text{ and vanishes at } x_{e}=\frac{x_{0}+x_{1}}{2}.$$
			
Second, the Transversality condition, the derivative of the eigenvalues with respect to $x_e$, assuming the vanishing trace condition, is nonzero. This assumption ensures a non-degenerate bifurcation:
			$$\frac{d\,\tau}{d\,x_{e}}\Bigg|_{x_{e}=\frac{x_{0}+x_{1}}{2}} \neq 0.$$
			
Third, the non-zero first Lyapunov constant. The computation yields 
			$$L_{1}=-\frac{F(x_{1}-x_{0})^{2}}{6x_{0}^{2}x_{1}^{2}}<0,$$  
			which confirms that the obtained limit cycle will be stable.  
		
		Thus, at $x_{e}=\frac{x_{0}+x_{1}}{2}$, the equilibrium is a weak focus of order one, and for $x_{e}<\frac{x_{0}+x_{1}}{2}$, a stable limit cycle emerges, consistent with Theorem~\ref{th:uniqueness}.  
	\end{proof}
\end{tho}

We consider the new parameter $\lambda = x_{e} - \frac{x_{0} + x_{1}}{2}$,
so that the Andronov--Hopf bifurcation occurs at $\lambda = 0$. For
convenience, the Hopf condition will be written in terms of $\lambda$
throughout the text, especially in Section~\ref{subsec:slowfast}.

\begin{pro}
	The period of the limit cycle emerging at the Andronov--Hopf bifurcation is
	\begin{equation*}
		T(\lambda,F)=\frac{4\pi\sqrt{2(x_0+x_1)x_0 x_{1}F(x_{0}-x_{1})^{2}}}{F(x_{0}+x_{1})(x_{0}-x_{1})^{2}}\left(1-\frac{1}{(x_0+x_1)}\lambda+\mathcal{O}(\lambda^{2})\right).
	\end{equation*}
\end{pro}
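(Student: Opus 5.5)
The plan is to derive $T(\lambda,F)$ from the linearization at $P_2^*$ near the Andronov--Hopf value $\lambda=0$, as in Theorem~\ref{hopf}, and then account for the nonlinear correction to the frequency.

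\emph{Step 1 (leading order).} At $\lambda=0$ the trace $\tau(x_e,y_e)$ in \eqref{eq:tracedet} vanishes, so the eigenvalues are $\pm i\omega_0$ with $\omega_0=\sqrt{\Delta(x_e,y_e)}$. Substituting $x_e=\frac{x_0+x_1}{2}$ into \eqref{eq:tracedet} gives
\[
\Delta=\frac{F(x_0+x_1)(x_1-x_0)^2}{8x_0x_1}.
\]
Then $T_0=2\pi/\omega_0=2\pi\sqrt{8x_0x_1/(F(x_0+x_1)(x_1-x_0)^2)}$. Rationalizing the square root, this equals the prefactor $4\pi\sqrt{2(x_0+x_1)x_0x_1F(x_0-x_1)^2}/\bigl(F(x_0+x_1)(x_0-x_1)^2\bigr)$ in the statement.

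\emph{Step 2 (linear frequency correction).} For $\lambda\neq 0$ the imaginary part of the eigenvalues is $\omega(\lambda)=\sqrt{\Delta-\tau^2/4}$. Since $\tau=O(\lambda)$, the term $\tau^2$ contributes only at order $\lambda^2$. Writing $\Delta\propto x_e(x_1-x_e)(x_e-x_0)$, its logarithmic derivative at the midpoint is
\[
\frac{1}{x_e}-\frac{1}{x_1-x_e}+\frac{1}{x_e-x_0}=\frac{2}{x_0+x_1}.
\]
Hence $\omega=\omega_0\bigl(1+\frac{\lambda}{x_0+x_1}+O(\lambda^2)\bigr)$ and $2\pi/\omega=T_0\bigl(1-\frac{\lambda}{x_0+x_1}+O(\lambda^2)\bigr)$, which is exactly the stated linear coefficient.

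\emph{Step 3 (amplitude correction, main obstacle).} Strictly, the period of the bifurcating cycle is not just $2\pi/\omega(\lambda)$. The cycle has squared amplitude $r^2\sim -\mathrm{Re}\,\lambda_1'(0)\,\lambda/L_1$, which is of order $\lambda$ (for $\lambda<0$), and the nonlinear frequency shift $\omega_2 r^2$ from the normal form $\dot\theta=\omega+\omega_2 r^2+\dots$ therefore also enters at order $\lambda$. So the critical step is to bring the system at $\lambda=0$ to Jordan form at $P_2^*$ and compute the cubic normal form $\dot z=i\omega_0 z+c_1 z|z|^2+\dots$, with $\omega_2=\mathrm{Im}\,c_1$. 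This is the same computation that yielded $L_1=\mathrm{Re}\,c_1$ in Theorem~\ref{hopf}. I would first do it symbolically, translating $P_2^*$ to the origin and using the standard Kuznetsov formula for $c_1$. The goal is to show either that $\mathrm{Im}\,c_1$ vanishes for this system, or that its contribution combines with Step 2 to give the coefficient $-\frac{1}{x_0+x_1}$.

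If $\mathrm{Im}\,c_1\neq0$, the statement should be read as the period of linearized oscillations $2\pi/\mathrm{Im}\,\lambda(\lambda)$, for which Steps 1--2 constitute the full proof. As a consistency check of Steps 1--3, I would integrate \eqref{eq:4} numerically for small $\lambda<0$ and compare the observed period of the cycle with $T(\lambda,F)$.
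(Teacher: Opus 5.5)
Your Steps 1--2 are correct, and they are what the stated formula encodes. The paper gives no proof, but its expression is exactly $2\pi/\omega(\lambda)$ with $\omega=\sqrt{\Delta-\tau^2/4}$, expanded to first order in $\lambda$. The gap is Step 3. You leave it open with two hoped-for outcomes: $\mathrm{Im}\,c_1=0$, or a cancellation that restores $-\frac{1}{x_0+x_1}$. Carrying out the computation shows that neither occurs.

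Set $m=\frac{x_0+x_1}{2}$, $p=x_0x_1$ and $\omega_0^2=\frac{F(x_0+x_1)(x_1-x_0)^2}{8x_0x_1}$. The coordinates $\xi=x-m$, $\eta=\frac{m}{\omega_0}(y-y_e)$ bring \eqref{eq:4} at $\lambda=0$ to
\[
\dot\xi=-\omega_0\eta-\frac{m}{p}\xi^2-\frac{\omega_0}{m}\xi\eta-\frac1p\xi^3,\qquad \dot\eta=\omega_0\xi+F\xi\eta .
\]
With $z=\xi+i\eta$ this gives
\[
g_{20}=-\frac{m}{2p}+\frac F2+\frac{i\omega_0}{2m},\quad g_{11}=-\frac{m}{2p},\quad g_{02}=-\frac{m}{2p}-\frac F2-\frac{i\omega_0}{2m},\quad g_{21}=-\frac{3}{4p}.
\]
Kuznetsov's formula $c_1=\frac{i}{2\omega_0}\bigl(g_{20}g_{11}-2|g_{11}|^2-\frac13|g_{02}|^2\bigr)+\frac{g_{21}}{2}$ then yields $\mathrm{Re}\,c_1=-\frac{1}{4p}$, which is supercritical and consistent with Theorem~\ref{hopf}. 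It also yields
\[
\mathrm{Im}\,c_1=\frac{1}{2\omega_0}\Bigl(-\frac{m}{2p}\Bigl(\frac{m}{2p}+\frac F2\Bigr)-\frac13|g_{02}|^2\Bigr)<0 ,
\]
a sum of strictly negative terms. So $\mathrm{Im}\,c_1$ never vanishes for admissible parameters.

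Write the eigenvalues as $\alpha(\lambda)\pm i\omega(\lambda)$, so that $\alpha'(0)=-m/p$ and $\omega'(0)=\omega_0/(2m)$. The cycle $r^2=-\alpha/\mathrm{Re}\,c_1$ rotates with angular speed $\omega(\lambda)-\alpha(\lambda)\,\mathrm{Im}\,c_1/\mathrm{Re}\,c_1+O(\lambda^2)$. Its actual period is therefore
\[
T=\frac{2\pi}{\omega_0}\Bigl(1-\Bigl[\frac{4}{3(x_0+x_1)}+\frac{5(x_0+x_1)+2x_0x_1F}{3(x_1-x_0)^2}+\frac{2(x_0+x_1)^2}{3x_0x_1F(x_1-x_0)^2}\Bigr]\lambda+O(\lambda^2)\Bigr).
\]
The linear coefficient here is strictly more negative than $-\frac{1}{x_0+x_1}$. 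The nonlinear shift has the same sign as the linear one, so the period grows faster as the cycle expands toward the heteroclinic loop.

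Read literally as the period of the limit cycle, the proposition is therefore correct only at order $O(1)$; the stated first-order coefficient belongs to the linearization. You should drop the hedge. Steps 1--2 are a complete proof of the formula as $2\pi/\omega(\lambda)$, which is what the paper's expression actually is, and no argument can produce it as the period of the bifurcating cycle. Your proposed numerical check at small $\lambda<0$ would separate the two coefficients.
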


\subsubsection{Transcritical bifurcation}
A transcritical bifurcation takes place when two equilibrium branches
intersect and exchange their stability as a parameter varies.
\begin{pro}
System~\eqref{eq:4} undergoes transcritical bifurcations at the points $x_e = x_0$ and $x_e = x_1$. 
The exchange of stability between the corresponding equilibria is summarized in Table~\ref{tab:34}, and illustrated in Figure~\ref{fig:transcritical}.

\vspace{0.5cm}
\begin{table}[h]
        \begin{tabular}{|c|c|c|}
        \hline
        Condition & Type of $x_e$ & Type of $x_0$ \\
        \hline
        $x_e < x_0$ & Saddle & Unstable node \\
        \hline
        $x_e > x_0$ & Unstable node & Saddle \\
        \hline
        \end{tabular}
    \quad
        \begin{tabular}{|c|c|c|}
        \hline
        Condition & Type of $x_e$ & Type of $x_1$ \\
        \hline
        $x_e < x_1$ & Stable node & Saddle \\
        \hline
        $x_e > x_1$ & Saddle & Stable node \\
        \hline
        \end{tabular}
        \caption{Transcritical bifurcation at  $x_e = x_0$ (left) and $x_e = x_1$ (right).}
\label{tab:34}
\end{table}

\begin{figure}[h]
        \begin{overpic}[width=5cm]{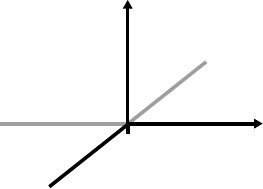}
            \put(41,67){y}
            \put(43,12){$(0,0)$}
            \put(90,15){$x_{e}-x_{0}$}
            \put(60,53){$y=\frac{-x_{0}+x_{1}}{x_{0}x_{1}}(x_{e}-x_0)$}
        \end{overpic}
\quad \quad \quad
        \begin{overpic}[width=5cm]{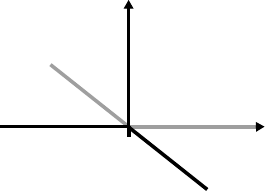}
            \put(41,67){y}
            \put(41,12){$(0,0)$}
            \put(90,15){$x_{e}-x_{1}$}
            \put(80,0){$y=\frac{x_{0}-x_{1}}{x_{0}x_{1}}(x_{e}-x_{1})$}
        \end{overpic}
    \caption{Transcritical bifurcation diagrams around $x_0=x_e$ (left) and $x_e=x_1$ (right). In black we represent the equilibria of saddle type, while in gray we represent the node type.} 
    \label{fig:transcritical}
\end{figure}
\end{pro}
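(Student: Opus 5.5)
The plan is to verify the statement by tracking the two equilibria involved in each crossing, $P^{*}_{2}=(x_e,y_e)$ and $P^{*}_{1\mp}$, as $x_e$ passes through $x_0$ (respectively $x_1$). At each crossing I will check two things: that they collide and exchange stability, and that the nondegeneracy conditions of Sotomayor's theorem for a transcritical bifurcation hold. The equilibria are fixed by Proposition~\ref{l1}, and the relevant traces and determinants are already listed in \eqref{eq:tracedet}. These formulas are polynomial in $x_e$, so they remain valid when $x_e$ lies slightly outside $(x_0,x_1)$, even though the classification proposition was only stated inside that range.

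First, the collision and the bifurcation diagram. Since $y_e=(x_1-x_e)(x_e-x_0)/(x_0x_1)$, near $x_e=x_0$ we have $y_e=\frac{x_1-x_0}{x_0x_1}(x_e-x_0)+\mathcal{O}((x_e-x_0)^2)$. Near $x_e=x_1$ we have $y_e=\frac{x_0-x_1}{x_0x_1}(x_e-x_1)+\mathcal{O}((x_e-x_1)^2)$. These are the lines drawn in Figure~\ref{fig:transcritical}. Thus $P^{*}_{2}$ passes through $(x_0,0)$, resp. $(x_1,0)$, transversally, while $P^{*}_{1\mp}$ stays on the invariant axis $y=0$. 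This gives two branches of equilibria crossing at a single parameter value.

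Second, the types in Table~\ref{tab:34}, read off by signs. Near $x_e=x_0$:
\begin{itemize}
\item $\Delta(x_e,y_e)=Fx_e(x_1-x_e)(x_e-x_0)/(x_0x_1)$ has the sign of $x_e-x_0$. So $P^{*}_2$ is a saddle for $x_e<x_0$.
\item For $x_e>x_0$ we have $\tau(x_e,y_e)>0$ because $x_0+x_1-2x_e>0$. Also $\tau^2-4\Delta\to\tau^2>0$ as $\Delta\to0$, so $P^{*}_2$ is an unstable node.
\item The Jacobian at $(x_0,0)$ is upper triangular, with eigenvalues $\frac{x_1-x_0}{x_1}>0$ and $F(x_0-x_e)$. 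Hence $P^{*}_{1-}$ is an unstable node for $x_e<x_0$ and a saddle for $x_e>x_0$.
\end{itemize}
Near $x_e=x_1$ the argument is the same. Now $\tau(x_e,y_e)<0$, and the eigenvalues at $(x_1,0)$ are $\frac{x_0-x_1}{x_0}<0$ and $F(x_1-x_e)$. This gives a stable node and saddle exchange for the two equilibria.

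Third, genericity via Sotomayor's theorem, which I expect to be the main computational step. Write $f=(P,Q)$ for the vector field of \eqref{eq:4}. At $x_e=x_0$ the Jacobian at $(x_0,0)$ has a simple zero eigenvalue with right eigenvector $v$. This is the limit of $v^{0}_{-}$, namely $v=(x_0/(x_1-x_0),1)$. Let $w$ be the corresponding left eigenvector, which is proportional to $(0,1)$. I will check three conditions:
\begin{enumerate}[(a)]
\item $w\cdot\partial_{x_e}f=-Fy=0$ at $y=0$. This reflects the persistence of equilibria on the invariant axis.
\item $w\cdot(D\partial_{x_e}f)\,v=-F\neq0$.
\item $w\cdot D^2f(v,v)=2\,\partial_{xy}Q\,v_1v_2=2Fx_0/(x_1-x_0)\neq0$.
\end{enumerate}
The computation at $x_e=x_1$ is identical, with $v$ the limit of $v^{1}_{+}$. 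If the second-derivative bookkeeping becomes messy, I will fall back on the invariant-line argument: the axis $y=0$ forces one branch to be fixed, and the other branch crosses it with nonzero slope as computed in the first step. Together with the sign changes established in the second step, this already yields the transcritical exchange of stability.
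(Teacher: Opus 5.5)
Your proposal is correct and follows essentially the route implicit in the paper, which gives no separate proof but reads the exchange of stability off the trace and determinant in \eqref{eq:tracedet} together with the linear behaviour of $y_e$ near $x_0$ and $x_1$; your Sotomayor check adds nondegeneracy conditions that the paper omits. One small slip: the limit of $v^{0}_{-}$ at $x_e=x_0$ is $v=\bigl(x_0x_1/(x_1-x_0),1\bigr)$, not $\bigl(x_0/(x_1-x_0),1\bigr)$, so condition (c) equals $2Fx_0x_1/(x_1-x_0)$ (and $-2Fx_0x_1/(x_1-x_0)$ at $x_e=x_1$), which is still nonzero and leaves your conclusion unchanged.
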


\subsection{Slow-Fast phenomena}\label{subsec:slowfast}

For $0 < \varepsilon \ll 1$, systems of the form
\begin{equation}\label{eq:sp}
    \begin{aligned}
        \dot{x} &= f(x, y, \varepsilon),\\
        \dot{y} &= \varepsilon\, g(x, y, \varepsilon),
    \end{aligned}
\end{equation}
are said to exhibit a \textit{slow-fast structure}. The variable $x$ evolves on a fast time scale, with dynamics of order $\mathcal{O}(1)$, whereas $y$ evolves on a slow time scale, with dynamics of order $\mathcal{O}(\varepsilon)$. Setting $\varepsilon = 0$ freezes the slow dynamics ($\dot{y} = 0$), yielding the \textit{fast subsystem}. The equilibria of this subsystem form a curve 
\[
\mathcal{C} = \{(x, y) : f(x, y, 0) = 0\},
\]
called the \textit{critical manifold}. Linearization at a point $(x, y) \in \mathcal{C}$ typically produces one zero eigenvalue (in the slow direction) and one nonzero eigenvalue (in the fast direction). If the nonzero eigenvalue does not vanish along $\mathcal{C}$, the manifold is said to be \textit{normally hyperbolic}. In this case, Fenichel's Theorem guarantees the persistence of a locally invariant slow manifold $\mathcal{M}_\varepsilon$ located at a distance $\mathcal{O}(\varepsilon)$ from $\mathcal{C}$. Nearby trajectories are rapidly attracted to $\mathcal{M}_\varepsilon$ and subsequently evolve slowly along it.

Normal hyperbolicity can fail at isolated points of $\mathcal{C}$, for instance at turning points where the fast eigenvalue vanishes. Near such points, the slow manifold persists only within an exponentially thin region of width $\mathcal{O}(\ee^{-c/\varepsilon})$. This loss of normal hyperbolicity gives rise to the so-called \textit{canard phenomenon}, first identified by Beno\^it and colleagues in \cite{Ben1981,BenCal1981,Cal1981,DieDie1981} using nonstandard analysis, later analysed in a geometric framework by Dumortier and Roussarie~\cite{dumortier1996canard} for the van der Pol oscillator, and subsequently formalized by Krupa and Szmolyan~\cite{KrupaSzmolyan2001} for all generic turning points via the blow-up method. In this setting, canard solutions are trajectories that follow both the attracting and the repelling branches of the slow manifolds, connecting distinct dynamical regimes. These developments constitute a fundamental extension of Fenichel's geometric singular perturbation theory.

Having introduced the general slow-fast framework, we now study system~\eqref{eq:4} within this setting. 
The parameter $F$ acts as the small perturbation $\varepsilon$, and setting $F = 0$ freezes the slow dynamics ($\dot{y} = 0$), reducing the system to its fast subsystem. Every point on the curve
\[
\mathcal{C} = \bigl\{ (x, y) : f(x) = -\tfrac{(x - x_0)(x - x_1)}{x_0 x_1} \bigr\}
\]
is then a non-isolated equilibrium. Linearization at $(x_e, y_e) \in \mathcal{C}$ yields one nonzero eigenvalue in the $x$-direction and a zero eigenvalue in the $y$-direction, confirming that $x$ is the fast variable and that $\mathcal{C}$ is the critical manifold.

For small but positive $F$, the system becomes slow-fast, and Fenichel's Theorem ensures the existence of a nearby invariant slow manifold $\mathcal{M}_F$. Normal hyperbolicity fails precisely at the Hopf point
\[
x_e = \frac{x_0 + x_1}{2},
\]
where the fast eigenvalue vanishes. Near this point, $\mathcal{M}_F$ persists only within an exponentially thin region of width $\mathcal{O}(\ee^{-c/F})$.

System~\eqref{eq:4} thus exhibits both relaxation oscillations and a canard explosion. As the parameters vary, the limit cycle emerging from the Hopf bifurcation grows rapidly and eventually collides with the heteroclinic orbit. This abrupt transition can be understood by analysing the invariant manifolds.
The critical manifold $\mathcal{C}$ represents the limit set of these invariant manifolds as $F$ goes to $0$, and their linear parts are tangent to $\mathcal{C}$ at the saddle points. Consequently, if the invariant manifolds converge to the critical manifold in the singular limit $F = 0$, the slow manifolds necessarily coincide with these invariant manifolds for small $F$.
Moreover, the slow manifolds are not unique: they form a family lying within an exponentially small neighbourhood of $\mathcal{C}$. However, this non-uniqueness is negligible, since all such manifolds remain exponentially close to each other. As the limit cycle disappears into a heteroclinic connection without forming a secondary small loop, the trajectory corresponds to a \textit{headless canard}. Determining the parameter values for which such a canard occurs provides the conditions for the heteroclinic connection and elucidates the link between local canard dynamics and the global bifurcation.

The framework of singular perturbation theory is employed here because it allows us to obtain local information about the heteroclinic bifurcation near $(x_{e}=\frac{x_{0}+x_{1}}{2},\, F=0)$; 
in particular, it provides a way to compute the first-order approximation of the heteroclinic bifurcation curve
emerging from that point. To do so, our system is converted into a normal form ---through an appropriate translation and rescaling of variables---
in which the next result of Krupa and Szmolyan can be directly applied. 

\begin{tho}[\cite{KrupaSzmolyan2001}]\label{tho:KS}
Consider system \eqref{eq:sp} with a small parameter $\varepsilon>0$. Assume that $(x,y,\lambda,\varepsilon)=(0,0,0,0)$ is a generic fold point, namely:
\begin{equation*}
f = f_x = g = 0, \qquad 
f_{xx}\neq 0,\quad 
f_y\neq 0,\quad 
g_x\neq 0,\quad 
g_\lambda\neq 0
\quad\text{at }(0,0,0,0).
\end{equation*}

After suitable scalings, it can be written in the canonical form
\begin{equation}\label{eq:5}
\begin{aligned}
x' &= -y\,h_1 + x^2 h_2 + \varepsilon h_3,\\
y' &= \varepsilon\bigl(\pm x\,h_4 - \lambda h_5 + y h_6\bigr),
\end{aligned}
\end{equation}
where $h_3=\mathcal O(x,y,\lambda,\varepsilon)$ and  
$h_j = 1+\mathcal O(x,y,\lambda,\varepsilon),$ for $j=1,2,4,5,6$.

Suppose that the reduced problem admits a trajectory $x_0(t)$ connecting the attracting and repelling slow manifolds $S_a$ and $S_r$. Then there exist $\varepsilon_0>0$ and a smooth function $\lambda_c(\sqrt\varepsilon)$ on $[0,\varepsilon_0]$ with the following properties for every $\varepsilon\in(0,\varepsilon_0]$:
\begin{enumerate}[(i)]
\item The matching condition $\pi(q_{a,\varepsilon})=q_{r,\varepsilon}$ holds if and only if  
\[
\lambda=\lambda_c(\sqrt\varepsilon).
\]

\item The parameter $\lambda_c$ admits the expansion  
\[
\lambda_c(\sqrt\varepsilon)
= -\Bigl(\frac{a_1+a_5}{2}+\frac{A}{8}\Bigr)\varepsilon
+ \mathcal O(\varepsilon^{3/2}).
\]

\item The transition map $\pi$ is defined only for $\lambda$ in an interval of width $\mathcal O(\ee^{-c/\varepsilon})$ around $\lambda_c(\sqrt\varepsilon)$, for some $c>0$.

\item The corresponding canard is nondegenerate in the sense that
\begin{equation}\label{eq:monotonicity}
\left.\frac{\partial}{\partial\lambda}
\bigl(\pi(q_{a,\varepsilon})-q_{r,\varepsilon}\bigr)
\right|_{\lambda=\lambda_c(\sqrt\varepsilon)}
>0 .
\end{equation}
\end{enumerate}
\end{tho}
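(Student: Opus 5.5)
Since this statement is quoted from \cite{KrupaSzmolyan2001}, my plan is to reconstruct the geometric blow-up argument rather than derive anything from system~\eqref{eq:4}. First I would reduce to the canonical form~\eqref{eq:5}. Starting from the generic fold conditions $f=f_x=g=0$, $f_{xx}\neq0$, $f_y\neq0$, $g_x\neq0$, $g_\lambda\neq0$, I would translate the fold to the origin, rescale $x,y$ so that the coefficients of $y$ and $x^2$ in $\dot x$ become $-1$ and $1$, and rescale time and $\lambda$ so that the coefficients of $x$ and $\lambda$ in $\dot y/\varepsilon$ become $\pm1$ and $-1$. All remaining terms are absorbed into the functions $h_j$.

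The constants $a_1,\dots,a_5$ in the statement are the first-order Taylor coefficients of these $h_j$. They are the quantities that will appear in the expansion of $\lambda_c$.

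Next I would augment the system with $\dot\varepsilon=0$ and $\dot\lambda=0$ and blow up the nilpotent point $(x,y,\varepsilon)=(0,0,0)$ by the weighted quasi-homogeneous map
\[
x=r\bar x,\qquad y=r^2\bar y,\qquad \varepsilon=r^2\bar\varepsilon,\qquad \lambda=r\bar\lambda .
\]
I would use two charts:
\begin{enumerate}[(i)]
\item An entry/exit chart $K_1$, with $\bar y=1$. Here Fenichel theory, together with a centre manifold through the equilibria that appear on the blown-up sphere, continues the attracting slow manifold $S_a$ (and, by time reversal, the repelling one $S_r$) up to a section near the sphere. This defines the points $q_{a,\varepsilon}$ and $q_{r,\varepsilon}$.
\item The rescaling chart $K_2$, with $\bar\varepsilon=1$. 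At $r=0$, $\bar\lambda=0$ the system becomes $\bar x'=-\bar y+\bar x^2$, $\bar y'=\bar x$. This is integrable, with first integral
\[
H=\tfrac12\ee^{-2\bar y}\bigl(\bar y-\bar x^2+\tfrac12\bigr),
\]
and the special solution $(\bar x,\bar y)=(t/2,\,t^2/4-1/2)$ joins $S_a$ to $S_r$. This special solution plays the role of the reduced canard trajectory $x_0(t)$ in the hypothesis.
\end{enumerate}

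The core step is a Melnikov computation in $K_2$ for the distance $D(r,\bar\lambda)$ between the continuations of $S_a$ and $S_r$, measured along a section such as $\{\bar x=0\}$. Perturbing in $(r,\bar\lambda)$ gives
\[
D=d_r\,r+d_\lambda\,\bar\lambda+\mathcal O(2),\qquad d_\lambda=\int_{-\infty}^{\infty}\nabla H\cdot\partial_{\bar\lambda}X\,dt\neq0 .
\]
Both coefficients are Gaussian-type integrals along the special solution, and $d_r$ collects the contributions of $a_1,\dots,a_5$. The implicit function theorem then gives a unique zero $\bar\lambda=\bar\lambda_c(r)$. Substituting $r=\sqrt\varepsilon$ and $\lambda=r\bar\lambda$ yields the smooth function $\lambda_c(\sqrt\varepsilon)$ and the expansion in (ii).

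The step I expect to be the main obstacle is evaluating these improper integrals exactly enough to land on the precise constant $-(a_1+a_5)/2-A/8$. My approach would be to write every integrand as a polynomial in $t$ times $\ee^{-t^2/2}$ and reduce everything to Gaussian moments.

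The sign of $d_\lambda$, once the orientation of the section is fixed, gives the nondegeneracy~\eqref{eq:monotonicity} in (iv). This, together with the uniqueness from the implicit function theorem, gives the equivalence in (i).

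For (iii), I would argue by exponential contraction. Along $S_a$ and $S_r$ the fast eigenvalues are of order one while the slow time scale is of order $1/\varepsilon$, so on the sections neighbouring trajectories contract toward $S_a$ (and, in backward time, toward $S_r$) at rate $\ee^{-c/\varepsilon}$. Consequently the transition map $\pi$ can reach a prescribed neighbourhood of $q_{r,\varepsilon}$ only for $\lambda$ in an interval of width $\mathcal O(\ee^{-c/\varepsilon})$ around $\lambda_c$. I would make this precise with exchange-lemma-type estimates in chart $K_1$.
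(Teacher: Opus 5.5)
Your outline is the Krupa--Szmolyan argument itself: quasi-homogeneous blow-up with charts $K_1$ and $K_2$, the integrable rescaled system with first integral $H$ and special solution $(t/2,\,t^2/4-1/2)$, a Melnikov distance solved by the implicit function theorem, and exponential repulsion along $S_r$ for (iii); the paper gives no proof of its own and simply cites \cite{KrupaSzmolyan2001} for exactly this argument, so your plan is correct and coincides with the intended one. Two small points to tighten: reaching \eqref{eq:5} in general needs parameter-dependent coordinate shifts (to keep the fold at the origin for all $\lambda$ and to make $h_3=\mathcal O(x,y,\lambda,\varepsilon)$), not just a translation and rescalings; and with $a_5=h_6(0)$ and $A=-a_2+3a_3-2a_4-2a_5$ as in \cite{KrupaSzmolyan2001}, your Gaussian-moment computation of $d_r/d_\lambda$ does reproduce the stated constant $-\bigl(\frac{a_1+a_5}{2}+\frac{A}{8}\bigr)$.
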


\begin{remark}
The monotonicity condition \eqref{eq:monotonicity} is a rotated vector field property: it ensures that the slow manifolds (equivalently, the invariant manifolds of the full system) move monotonically with respect to the parameter~$\lambda$.  Consequently, they can intersect at most once, yielding uniqueness of the canard solution.
\end{remark}

The Hopf point, located at the maximum of the $x$-nullcline parabola,
$$
\left(\frac{x_{0}+x_{1}}{2},\, \frac{(x_{0}-x_{1})^{2}}{4x_{0}x_{1}}\right),
$$
is translated to the origin. In the translated coordinates the system becomes
\begin{equation*}
    \begin{aligned}
        \dot{x}&=-\frac{x^{3}}{x_{1}x_{0}} 
        - \frac{(x_{0}+x_{1})x^{2}}{2x_{1}x_{0}} 
        - xy - \left(\frac{x_{0}+x_{1}}{2}\right)y,\\[4pt]
        \dot{y}&=F\!\left[\!\left(y + \frac{(x_{0}-x_{1})^{2}}{4x_{1}x_{0}}\right)x 
        + \left(\frac{x_{0}+x_{1}}{2} - x_{e}\right)y 
        + \frac{(x_{0}-x_{1})^{2}(x_{0}+x_{1}-2x_{e})}{8x_{1}x_{0}}\!\right]\!.
    \end{aligned}
\end{equation*}

\begin{pro}\label{pro:CCF}
The canonical form~\eqref{eq:5} is obtained by scaling the original variables and parameters as
\[
x = m\,\bar{x}, \qquad 
y = n\,\bar{y}, \qquad 
F = l\,\bar{F}, \qquad 
\tau = \bar{\tau}/t,
\]
where
\[
l = \frac{2(x_{0}+x_{1})}{x_{0}x_{1}(x_{1}-x_{0})^{2}}, 
\qquad m = 1, 
\qquad n = -\frac{1}{x_{1}x_{0}}, 
\qquad t = -\frac{x_{0}+x_{1}}{2x_{1}x_{0}}.
\]
The rescaled system becomes
\begin{equation}\label{eq:6}
    \begin{aligned}
        \dot{\bar{x}} &= \bar{x}^{2}\!\left(1+\frac{2\bar{x}}{x_{0}+x_{1}}\right)
        - \bar{y}\!\left(1+\frac{2\bar{x}}{x_{0}+x_{1}}\right),\\[4pt]
        \dot{\bar{y}} &= \bar{F}\!\left[\!\left(1 - \frac{4\bar{y}}{(x_{1}-x_{0})^{2}}\right)\bar{x}
        + \lambda\!\left(1 - \frac{4\bar{y}}{(x_{1}-x_{0})^{2}}\right)\!\right],
    \end{aligned}
\end{equation}
where $\lambda = x_{e} - \frac{x_{0}+x_{1}}{2}$ is the Hopf condition.
\end{pro}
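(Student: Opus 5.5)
The plan is a direct verification: substitute the proposed scalings into the translated system displayed just before Proposition~\ref{pro:CCF} and check that each equation reduces to the corresponding line of \eqref{eq:6}. Write $s=x_0+x_1$ and $d=\frac{(x_1-x_0)^2}{4x_0x_1}$, so that the Hopf point sits at $(s/2,d)$. The key structural observation is that both right-hand sides of the translated system factor. I would first establish these factorizations and only then insert the constants $m,n,l,t$; this reduces the verification to bookkeeping of constants.

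\textbf{Step 1: factor the $x$-equation.} Grouping terms gives
\[
\dot x=-\frac{s}{2x_0x_1}\,x^2\Bigl(1+\frac{2x}{s}\Bigr)-\frac{s}{2}\,y\Bigl(1+\frac{2x}{s}\Bigr).
\]
With $m=1$ and $y=n\bar y=-\bar y/(x_0x_1)$, the two terms share the common factor $-\frac{s}{2x_0x_1}$. The bracket is then exactly $\bar x^2(1+\frac{2\bar x}{s})-\bar y(1+\frac{2\bar x}{s})$, which is the first line of \eqref{eq:6}.

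\textbf{Step 2: factor the $y$-equation.} Expanding the constant term shows that
\[
\frac{(x_0-x_1)^2(s-2x_e)}{8x_0x_1}=-d\lambda .
\]
Hence the bracket equals $(y+d)x-\lambda y-d\lambda=(y+d)(x-\lambda)$. Moreover $y+d=d\bigl(1-\frac{4\bar y}{(x_1-x_0)^2}\bigr)$, which produces the factor $\bigl(1-\frac{4\bar y}{(x_1-x_0)^2}\bigr)$ appearing in both terms of the second line of \eqref{eq:6}.

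\textbf{Step 3: fix the constants.} Choosing $t=-\frac{s}{2x_0x_1}$ cancels the common prefactor from Step 1. Because $t<0$, this is a time reversal, which is harmless and in fact matches the orientation required in \eqref{eq:5}. In the $\bar y$-equation the accumulated factor is
\[
\frac{-x_0x_1\,F\,d}{t}=\frac{x_0x_1(x_1-x_0)^2}{2s}\,F .
\]
This equals $\bar F$ exactly when $F=l\bar F$ with $l=\frac{2s}{x_0x_1(x_1-x_0)^2}$, as stated. I would also check the hypotheses of Theorem~\ref{tho:KS}: $h_1=h_2=1+\frac{2\bar x}{s}$, $h_3=0$, and $h_4=h_5=1-\frac{4\bar y}{(x_1-x_0)^2}$ are all of the form $1+\mathcal O(\bar x,\bar y)$. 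The genericity conditions $f_{xx}\neq0$, $f_y\neq0$, $g_x\neq0$ and $g_\lambda\neq0$ are then immediate.

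\textbf{Main obstacle: the sign of $\lambda$.} My computation in Step 2 yields $(\bar x-\lambda)$, whereas \eqref{eq:6} displays $(\bar x+\lambda)$, so the sign of the linear term is where I expect the main difficulty. I would first recheck the sign of the constant term in the translated system. If the discrepancy persists, I would resolve it by one of two routes. The first is to read $\lambda$ with the opposite orientation, $\lambda=\frac{s}{2}-x_e$. The second is to compose with the reflection $(\bar x,\tau)\mapsto(-\bar x,-\tau)$, which preserves the form of the $\bar x$-equation up to the $h_j$ corrections, turns $h_1=h_2=1+\frac{2\bar x}{s}$ into $1-\frac{2\bar x}{s}$, and flips the sign of $\bar x$ in the $\bar y$-equation. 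Either way, \eqref{eq:5} allows both the $\pm x\,h_4$ choice and a free sign in $h_j$, so the conclusion that Theorem~\ref{tho:KS} applies is unaffected. The only lasting consequence is the sign of the first-order correction $\lambda_c$, which must be tracked carefully when the heteroclinic curve is later mapped back to $x_e$.
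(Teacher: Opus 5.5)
Your Steps 1--3 are correct. They are the direct substitution the paper intends; the paper gives no argument beyond asserting \eqref{eq:6}. They reproduce the first line of \eqref{eq:6} and confirm the stated $m,n,t,l$, with $F=l\bar F$, i.e.\ $\bar F=\frac{x_0x_1(x_1-x_0)^2}{2(x_0+x_1)}F$.

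The sign you flag is not a slip on your side. The paper's own translated system has bracket $(y+d)(x-\lambda)$. The stated scalings multiply this bracket by $F d/(tn)>0$ and leave $x=\bar x$ unchanged. So with $\lambda=x_e-\frac{x_0+x_1}{2}$ one necessarily gets
\[
\dot{\bar y}=\bar F\Bigl(1-\frac{4\bar y}{(x_1-x_0)^2}\Bigr)(\bar x-\lambda).
\]
The second line of \eqref{eq:6} as displayed, with $+\lambda$, is therefore a sign error, and no argument can establish it as written.

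You should state the correction rather than try to explain it away. Neither of your two routes proves \eqref{eq:6} as written:
\begin{itemize}
\item The first route contradicts the explicit definition of $\lambda$ given in the proposition.
\item The second route composes with a reflection, which amounts to different scalings ($m=-1$ and $t$ of the opposite sign). It produces $h_1=h_2=1-\frac{2\bar x}{x_0+x_1}$, so it gives a different system from \eqref{eq:6}.
\end{itemize}

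Neither route is needed. The corrected system is already of the form \eqref{eq:5} with the $+x\,h_4$ choice, $h_4=h_5$, and the Krupa--Szmolyan parameter equal to $\lambda$ itself. Your closing remark about the sign of $\lambda_c$ is then also moot.

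The sign is not a harmless convention either:
\begin{itemize}
\item With the corrected sign, Theorem~\ref{tho:KS} gives $\lambda_c=-\bar F/(2(x_0+x_1))<0$. This places the canard curve in $x_e<\frac{x_0+x_1}{2}$, as Corollary~\ref{cor:CC} asserts.
\item The displayed $+\lambda$ would place the canard curve in $x_e>\frac{x_0+x_1}{2}$, where Theorem~\ref{th:BD} rules out limit cycles.
\end{itemize}
Keep your relation $\bar F=F/l$ when mapping back to $F$. The final conversion in Corollary~\ref{cor:CC} effectively uses the reciprocal factor.

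One small omission in your hypothesis check: here $h_6\equiv 0$. This is admissible in Krupa and Szmolyan's original normal form, where only $h_1,h_2,h_4,h_5$ must be $1+\mathcal O(\cdot)$ and $h_6$ enters only through $a_5=h_6(0)$. It does not match the paper's transcription of \eqref{eq:5}, which also lists $j=6$ among the functions of the form $1+\mathcal O(\cdot)$.
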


Directly from system~\eqref{eq:6}, the functions $h_i$ for 
$i = 1,\ldots,6$ can be computed, which allows us to determine $\lambda_c(\sqrt{F})$ and
verify that all assumptions of Theorem~\ref{tho:KS} are satisfied.

\begin{coro}\label{cor:CC}
The canard curve of system~\eqref{eq:4} is
\[
\lambda_c\!\left(\sqrt{F}\right)
    = -\frac{1}{(x_{1}-x_{0})^{2}x_{0}x_{1}}\,F
      + \mathcal{O}(F^{3/2}),
    \qquad 
    \text{where }\lambda = x_{e}-\frac{x_{0}+x_{1}}{2}.
\]
\begin{proof}
From Proposition~\ref{pro:CCF}, the scaled system~\eqref{eq:6} has
\[
h_1 = h_2 = 1+\frac{2\bar{x}}{x_0+x_1}, \qquad
h_3 = h_6 = 0, \qquad
h_4 = h_5 = 1 - \frac{4\bar{y}}{(x_1-x_0)^2}.
\]

The coefficients in the canard curve are defined by
\[
a_1=\partial_x h_3(0),\quad 
a_2=\partial_x h_1(0),\quad
a_3=\partial_x h_2(0),\quad
a_4=\partial_x h_4(0),\quad
a_5=h_6(0).
\]
Evaluating at the origin gives
\[
a_1=a_4=a_5=0,
\qquad 
a_2=a_3=\frac{2}{x_0+x_1},
\]
and therefore
\[
A=-a_2+3a_3=\frac{4}{x_0+x_1}.
\]

Applying Theorem~\ref{tho:KS} yields
\[
\lambda_c(\sqrt{\bar F})
    = -\frac{1}{2(x_0+x_1)}\,\bar F
      +\mathcal O(\bar F^{3/2}).
\]

Using the scaling relation \(F=l\,\bar F\) we obtain
\[
\lambda_c(\sqrt{F})
    = -\frac{1}{(x_1-x_0)^2 x_0 x_1}\,F
      +\mathcal O(F^{3/2}).
\]
\end{proof}

\end{coro}

\begin{coro}
At the singular Hopf point $(\lambda,F) = (0,0)$, the canard curve and the
heteroclinic bifurcation curve have the same local expansion.
\end{coro}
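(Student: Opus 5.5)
The plan is to show that, for small $F>0$ and $\lambda$ near $\lambda_c(\sqrt F)$, the canard trajectory given by Theorem~\ref{tho:KS} is exactly the heteroclinic orbit joining $W^u(x_0,0)$ to $W^s(x_1,0)$. Once that is done, the two curves coincide for $0<F<F_0$, and in particular they share the expansion of Corollary~\ref{cor:CC}.

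\textbf{Step 1: slow manifolds and invariant manifolds.} Work in the critical manifold $\mathcal C=\{y=-(x-x_0)(x-x_1)/(x_0x_1)\}$. The fold sits at its maximum $x=\frac{x_0+x_1}{2}$. The fast eigenvalue $-x\,f'(x)$ is negative on the branch $x>\frac{x_0+x_1}{2}$, which is therefore attracting, and positive on the branch $x<\frac{x_0+x_1}{2}$, which is repelling. Each branch ends at a saddle on the axis: $(x_1,0)$ and $(x_0,0)$ respectively. By the Proposition on the saddles $P^*_{1\pm}$, the eigenvectors $v^1_+$ and $v^0_-$ tend to the tangent directions of $\mathcal C$ at these points as $F\to0$.

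Normal hyperbolicity holds on compact pieces of each branch away from the fold. Applying Fenichel's theorem there, one branch of $W^u(x_1,0)$ is a choice of the attracting slow manifold $S_{a,F}$, and one branch of $W^s(x_0,0)$ is a choice of the repelling slow manifold $S_{r,F}$. To fix the roles, check the direction of the slow flow $\dot y=Fy(x-x_e)$: it increases $y$ on the right branch towards the fold and decreases it on the left branch. The orbit therefore runs from $(x_1,0)$ up the attracting branch, over the fold, and down to $(x_0,0)$. The heteroclinic under study is the corresponding $W^u(x_1,0)\to W^s(x_0,0)$ connection, and I will align the orientations with the paper's convention.

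Near the saddles, Fenichel theory does not directly cover the hyperbolic points on the axis. I would handle this by noting that the axis $y=0$ is invariant and that the saddles are hyperbolic uniformly in $F$. The stable and unstable manifolds then depend smoothly on $F$ up to $F=0$, where they coincide with the branches of $\mathcal C$.

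\textbf{Step 2: matching condition.} Theorem~\ref{tho:KS}(i) says $\pi(q_{a,F})=q_{r,F}$ precisely when $\lambda=\lambda_c(\sqrt F)$. Since $S_{a,F}$ and $S_{r,F}$ can be taken to be these invariant manifolds, matching is exactly the heteroclinic connection. The non-uniqueness of slow manifolds causes no trouble, because the invariant manifolds are one particular choice and the theorem holds for any choice. By Theorem~\ref{tho:KS}(iv) and Proposition~\ref{pro:xe_f}, together with the rotated vector field monotonicity, the connection is unique for each $F$. Hence $x_e^h(F)-\frac{x_0+x_1}{2}=\lambda_c(\sqrt F)$ for small $F$, and Corollary~\ref{cor:CC} gives
\[
x_e^h(F)=\frac{x_0+x_1}{2}-\frac{F}{(x_1-x_0)^2x_0x_1}+\mathcal O(F^{3/2}).
\]

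\textbf{Main obstacle.} The hardest step is the first one: justifying that the global invariant manifolds agree with Fenichel slow manifolds along the whole branch, including near the hyperbolic endpoints. Near the fold this is automatic from the blow-up in \cite{KrupaSzmolyan2001}. Near the saddles I would straighten the axis and invoke the smooth dependence of hyperbolic invariant manifolds on $F$, with $\mathcal C$ as the limit at $F=0$.

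I also need that the reduced flow has an orbit from $S_a$ through the fold to $S_r$, which is the hypothesis of Theorem~\ref{tho:KS}. At $\lambda=0$ the reduced flow on $\mathcal C$ crosses the fold regularly because $g$ and $f_x$ vanish simultaneously there. I would confirm this with the computation that $g_x\neq0$ in the canonical form \eqref{eq:6}.
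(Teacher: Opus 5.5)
Your strategy is the paper's own. The paper asserts that the separatrices $W^u(x_1,0)$, $W^s(x_0,0)$ and the Fenichel slow manifolds all collapse onto $\mathcal C$ as $F\to0$, and that the separatrices are one member of the exponentially thin family of slow manifolds. From this it concludes that the matching condition of Theorem~\ref{tho:KS} and the heteroclinic condition share their expansion. Your Step~2 is sharper than that. Existence from Theorem~\ref{tho:KS}(i) plus uniqueness from Proposition~\ref{pro:xe_f} gives $x_e^h(F)=\frac{x_0+x_1}{2}+\lambda_c(\sqrt F)$ exactly for small $F$, once the slow manifolds are taken to be the separatrices.

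The gap is in how you propose to close Step~1. The saddles are \emph{not} hyperbolic uniformly in $F$. The separatrices you need are exactly the slow ones, with eigenvalues $\lambda^1_+=F(x_1-x_e)$ at $(x_1,0)$ and $\lambda^0_-=F(x_0-x_e)$ at $(x_0,0)$, and both tend to $0$. At $F=0$ these points are merely points on the curve of equilibria $\mathcal C$. There is then no hyperbolic unstable manifold of $(x_1,0)$, or stable manifold of $(x_0,0)$, whose smooth dependence on $F$ could be continued to $F=0$. So the step ``invoke smooth dependence of hyperbolic invariant manifolds on $F$, with $\mathcal C$ as the limit'' fails as written.

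The repair goes the opposite way from what you say: Fenichel's theorem \emph{does} cover the saddles. The fast eigenvalues there, $\lambda^1_-=-(x_1-x_0)/x_0$ and $\lambda^0_+=(x_1-x_0)/x_1$, are nonzero and independent of $F$, so $\mathcal C$ is normally hyperbolic at both endpoints. Apply Fenichel to compact pieces of the two branches of $\mathcal C$ extended slightly past $y=0$; the parabola continues into $y<0$ for $x>x_1$ and for $x<x_0$.
\begin{itemize}
\item On the right branch, $S_{a,F}$ is a graph $x=h(y,F)$ that is $C^1$-close to $\mathcal C$. Hence it crosses the invariant axis $y=0$ transversally.
\item At the crossing point the field is tangent to two transverse invariant curves, so it vanishes there. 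The crossing point is therefore $(x_1,0)$.
\item A $C^1$ invariant curve through a hyperbolic saddle and transverse to its stable separatrix (the axis) is tangent to $v^1_+$. The restricted one-dimensional flow expands at rate $\lambda^1_+>0$, so points on the curve tend to the saddle in backward time. The curve therefore coincides locally with $W^u(x_1,0)$.
\item Since $\dot y=Fy(x-x_e)>0$ along that piece, local invariance shows that all of $S_{a,F}\cap\{y>0\}$, up to the entry section of the fold region, is the upper branch of $W^u(x_1,0)$.
\end{itemize}
The time-reversed argument identifies $S_{r,F}\cap\{y>0\}$ with the upper branch of $W^s(x_0,0)$.

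This justifies your Step~1, and also the convergence claim that the paper leaves unproved. It further shows that on these pieces every choice of slow manifold equals the separatrices. So $q_{a,F}$ and $q_{r,F}$ are determined without any choice, and your Step~2 then goes through as written.
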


\begin{proof}
As $F$ goes to $0$, both the invariant (stable and unstable) manifolds defining the heteroclinic connection and the  slow manifolds defining the canard solution converge to the same object--the critical manifold $\mathcal{C}$. 
Locally, this implies that they coincide up to analytic order: the invariant manifolds are unique, while the slow manifolds form a non-unique family lying within an exponentially thin neighbourhood of $\mathcal{C}$. 
Therefore, both constructions share the same local expansion near $(\lambda,F)=(0,0)$.
\end{proof}

The canard phenomenon captures a sharp yet continuous, parameter-dependent passage from small to large oscillations that emerges near critical thresholds of the system. The numerical study in \cite{JosVidBlaiErnest2021} examines both the transients generated by the heteroclinic connection and the slow-fast limit cycle, which appears as a self-sustained pattern of abrupt oscillations. Ecologically, these behaviours become especially pronounced for small values of the parameter $F$, which corresponds to a reduced consumer efficiency relative to vegetation turnover. In this regime, the vegetation is more sensitive and vulnerable, making the system prone to sudden large-amplitude oscillations when parameters approach the critical thresholds associated with the canard explosion and the heteroclinic bifurcation. Moreover, low values of $F$ induce a pronounced slow-fast structure: trajectories spend long times drifting along slow manifolds, which can be interpreted ecologically as metastable states where the system appears quasi-stationary on short time scales before undergoing rapid transitions. Complementary analyses of this slow-fast regime for the Bazykin--Berezovskaya model, where the slow time scale is associated with the resource dynamics rather than with the consumer, can be found in \cite{Borsotti_2026}.

\subsection{Heteroclinic bifurcation}\label{sec:heteroclinic}

A heteroclinic bifurcation is a global phenomenon in which the unstable
manifold of one equilibrium connects to the stable manifold of another,
forming a heteroclinic orbit for specific parameter values.  A small change in
the parameters breaks this connection, leading to a qualitative change in the
global dynamics--typically creating or destroying large-scale invariant
structures such as limit cycles.

In this section we characterize the heteroclinic connection and use it to describe qualitatively the heteroclinic bifurcation curve in parameter space. In our system this curve identifies the parameter values at which the limit cycle collides with the polycycle and disappears, leading to ecological collapse. The bifurcation is of codimension one: although, in principle, up to two heteroclinic connections may occur, one of them always lies on the invariant axis $y=0$. Consequently, only a single nontrivial heteroclinic connection plays a dynamical role.

The first step is to verify that such a heteroclinic connection does indeed occur. Corollary~\ref{cor:unstableconnection} ensures that the connection is confined to a bounded interval of parameter values, implying that for some parameter choice it must be realized. Geometrically, this creates a one limit cycle region in parameter space, bounded on one side by the Hopf bifurcation curve (see Section~\ref{sec:Hopf}) and on the other by the heteroclinic bifurcation curve described here. 

To analyse the heteroclinic bifurcation curve, we examine the return map around the polycycle. Its leading-order behaviour depends only on the hyperbolicity ratio, which determines whether the connection is attracting or repelling. The study of the return map is typically used to bound cyclicity; however, in our setting, Corollary~\ref{coro:uniqueness} already ensures that no limit cycle can persist at the heteroclinic parameter value.

\subsubsection{Return map near a polycycle and the role of the hyperbolicity ratio}

Let $\rho>0$ denote a transversal coordinate measuring the distance to the
heteroclinic connection. For each saddle $S_i$ ($i=0,1$), in suitable local
coordinates the transition map between incoming and outgoing sections has the
form
\[
\Pi_i(\rho)=C_i\,\rho^{\sigma_i}\,(1+o(1)),
\qquad
\sigma_i=-\frac{\lambda_-^i}{\lambda_+^i}>0,
\]
where $\lambda_+^i>0$ and $\lambda_-^i<0$ denote the unstable and stable
eigenvalues of $S_i$. Composing the two local maps yields the global return map
\[
\Pi(\rho)=C\,\rho^{f_{\lambda}}\,(1+o(1)),
\qquad
C=C_1 C_0^{\sigma_1}>0,
\qquad
f_{\lambda}=\sigma_0\sigma_1.
\]

Thus,
\[
f_{\lambda}
=
\Bigl(-\frac{\lambda_-^0}{\lambda_+^0}\Bigr)
\Bigl(-\frac{\lambda_-^1}{\lambda_+^1}\Bigr)
\]
is the hyperbolicity ratio. For sufficiently small
$\rho>0$, the value of $f_{\lambda}$ determines whether the polycycle is locally
attracting or repelling:
\[
\text{ If } 0<f_{\lambda}<1 \text{ then, the connection is attracting and}
\]
\[
\text{ if } f_{\lambda}>1 \text{ then, the connection is repelling}.
\]
The case $f_{\lambda}=1$ corresponds to a nonhyperbolic connection.

\subsubsection*{Application to system \eqref{eq:4}}
For our system, the hyperbolicity ratio becomes
$$
f_{\lambda}
=
\frac{F(x_{0}-x_{e})}{\frac{x_{1}-x_{0}}{x_{1}}}
\cdot
\frac{\frac{x_{0}-x_{1}}{x_{0}}}{F(x_{1}-x_{e})}
=
\frac{(x_{e}-x_{0})x_{1}}{(x_{1}-x_{e})x_{0}}
> 0.
$$
As a function of the unfolding parameter $x_{e}$, this expression is strictly
increasing on the interval $x_{0}<x_{e}<x_{1}$. Hence, the nonhyperbolic condition
$f_{\lambda}=1$ occurs precisely when
$$
x_{c}:=x_e=\frac{2x_{0}x_{1}}{x_{0}+x_{1}}>0.
$$

Moreover, this critical value satisfies
$$
x_{0}
<
x_c
<
\frac{x_{0}+x_{1}}{2}.
$$

Along the heteroclinic bifurcation curve $x_{e}=x_{e}^{h}(F)$, the unique stable limit cycle (see Corollary~\ref{coro:uniqueness}) of the system has already collided with the polycycle and no periodic orbit remains. Since the coexistence equilibrium is repelling for all
$x_{0}<x_{e}<\tfrac{x_{0}+x_{1}}{2}$ (see Table~\ref{tab:stability_cases}), the heteroclinic connection must be attracting. This agrees with the condition $f_{\lambda}\ge 1$, which characterizes the parameter values where the polycycle is locally attracting for small transversal distance $\rho>0$.

\begin{coro}\label{cor:unstableconnection}
The heteroclinic connection must occur in the interval
$$
\frac{2x_0 x_1}{x_0 + x_1} \leq x_e < \frac{x_0 + x_1}{2}.
$$
\begin{proof}
From the previous discussion, the heteroclinic polycycle must be attracting, since it
coexists with a repelling coexistence equilibrium. This corresponds to the condition
$f_{\lambda}\ge 1$, which is equivalent to
$$
x_e \;\ge\; \frac{2x_0 x_1}{x_0 + x_1}.
$$
Moreover, the heteroclinic bifurcation can only occur once the limit cycle has emerged, so
necessarily
$$
x_e < \frac{x_0 + x_1}{2}.
$$
Combining both bounds yields the claimed interval.
\end{proof}
\end{coro}

\begin{pro}\label{pro:dieLC}
	The limit cycle dies at the heteroclinic connection.
	\begin{proof}
	Since there is only one Andronov--Hopf bifurcation and we now that only one limit cycle can appear, it can only disappear either through another Hopf (i.e.\ a change of stability of the coexistence
equilibrium) or at the heteroclinic connection. In the interval
$$\frac{2x_0x_1}{x_0+x_1} \le x_e < \frac{x_0+x_1}{2},$$
no change of stability occurs, and therefore the limit cycle can only vanish
by merging with the heteroclinic connection.
	\end{proof}
\end{pro}

 Since the heteroclinic curve $x_{e}(F)$ cannot be written explicitly, it is
useful to determine sharp bounds for its possible values. Corollary~\ref{cor:unstableconnection} shows that
its lower bound is dictated by the condition $f_{\lambda}=1$, and the following lemma, proved in Appendix~\ref{sec:optimalbound}, shows the optimality:

\begin{lema}\label{lem:optimal_bound}
The heteroclinic bifurcation curve $x_e^{\mathrm{h}}(F)$ satisfies
$$
\lim_{F\to\infty} x_e^{\mathrm{h}}(F) = x_e^\ast := \frac{2x_0x_1}{x_0+x_1}.
$$
Hence, $x_e^\ast$ is the optimal lower bound of the heteroclinic bifurcation curve.
\end{lema}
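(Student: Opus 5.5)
The plan is to study the limit $F\to\infty$ directly. The lower bound $x_e^{\mathrm h}(F)\ge x_e^\ast$ is already given by Corollary~\ref{cor:unstableconnection}, so it remains to prove that for every $\delta>0$ there is $F_\delta$ such that no heteroclinic connection exists for $x_e\ge x_e^\ast+\delta$ and $F\ge F_\delta$. Combined with Proposition~\ref{pro:xe_f} and the monotone rotation in $x_e$ from Section~\ref{subsec:rvfmodel}, this gives the limit.

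\emph{Coordinates.} As in the proof of Corollary~\ref{coro:uniqueness}, write $y=\ee^{v}$ and set $g(x):=-\frac{(x-x_0)(x-x_1)}{x_0x_1}$. System~\eqref{eq:4} becomes
\[
\dot x = x\,(g(x)-\ee^{v}),\qquad \dot v = F(x-x_e).
\]
Along an orbit on which $\ee^{v}$ is negligible compared with $g(x)$ we have $dv/dx\approx F(x-x_e)/(x g(x))$, that is
\[
v\approx F\,\Phi(x)+c,\qquad \Phi(x):=\int \frac{s-x_e}{s\,g(s)}\,ds .
\]
Near the saddles $\Phi$ has logarithmic singularities. Using $x_0g'(x_0)=\lambda_+^0$ and $x_1g'(x_1)=\lambda_-^1$, one gets $\Phi\sim\frac{x_0-x_e}{\lambda_+^0}\ln(x-x_0)$ near $x_0$ and $\Phi\sim\frac{x_1-x_e}{\lambda_-^1}\ln(x_1-x)$ near $x_1$. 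After multiplication by $F$, these coefficients are exactly $-\sigma_0$ and $-1/\sigma_1$ in the notation of the return-map subsection. So the branch of $W^u(x_0,0)$ leaving along the axis behaves like $y\simeq C_0(x-x_0)^{\sigma_0}$. The branch of $W^s(x_1,0)$ arriving along the axis behaves like $y\simeq C_1(x_1-x)^{1/\sigma_1}$, whose exponent is $F$-independent.

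\emph{Large-$F$ geometry.} Since $\dot v = F(x-x_e)$, for $F$ large $y$ is either exponentially small or $v$ changes on the fast time scale $1/F$. Consequently each manifold consists of a slow piece hugging $\{y=0\}$, where $x$ follows $\dot x=xg(x)$, and a fast quasi-vertical excursion in $y$ at almost constant $x$. I would track $W^u(x_0,0)$ forward and $W^s(x_1,0)$ backward on a fixed transversal, for instance the line $x=x_e$ in $\Omega$. The aim is to show that the sign of their difference is determined by the sign of $\sigma_0\sigma_1-1=f_\lambda-1$ up to errors that vanish as $F\to\infty$. Concretely, I would compare the exponents at which $y$ decays toward each saddle along the two manifolds. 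If $f_\lambda>1+\eta$, the unstable manifold is pushed into the region $x>x_e$ with a large $y$-excursion, passes above the stable manifold, and falls onto the $x$-axis to the left of $x_1$, so it never reaches $(x_1,0)$. This is exactly the picture in which the polycycle is too attracting to be realised. By the rotated-vector-field monotonicity, the side on which the two manifolds lie is preserved for all larger $x_e$.

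\emph{Main obstacle.} The hardest step is making the matching rigorous, uniformly in $F$, between the slow piece near the axis and the fast excursion. The natural rescaling $\varepsilon=1/F$ makes $x$ slow and $y$ fast, but the fast subsystem $y'=y(x-x_e)$ is degenerate: it has no normally hyperbolic critical manifold except the axis itself. So Fenichel theory does not apply directly. My first attempt would be to build explicit barrier curves in the $(x,v)$ plane, of the form
\[
v=F\,\Phi(x)\pm K\ln F \pm K,
\]
and verify by a direct sign computation of the vector field that they trap the two manifolds. If $\ln F$ corrections turn out insufficient near the saddles, I would fall back on Dulac-type normal forms at each saddle, which make the transition maps $\Pi_i$ explicit with $F$-uniform constants $C_i$, and then compare $\ln C$ against $(f_\lambda-1)\ln\rho$ as $F\to\infty$.
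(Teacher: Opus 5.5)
\textbf{The gap is in your central step}, the claim that for large $F$ the side on which the two separatrices lie on $x=x_e$ is decided by the sign of $f_\lambda-1$. The hyperbolicity ratio is local data at the saddles. It governs the return map $\Pi(\rho)=C\rho^{f_\lambda}$ near the polycycle, but not the splitting of the separatrices, which is a global quantity.

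The paper itself supplies a counterexample. The PWL system has, by construction, the same eigenvalues at $(x_0,0)$ and $(x_1,0)$, hence the same $f_\lambda$. Yet its explicit curve $F^h(x_e)=2(x_e-x_H)/(x_0x_1-x_e^2)$ blows up at $x_e=\sqrt{x_0x_1}$, not at $2x_0x_1/(x_0+x_1)$.

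Your set-up also tracks the wrong objects. $W^u(x_0,0)$ and $W^s(x_1,0)$ lie in $\{y=0\}$, since their eigenvectors are $(1,0)$. The nontrivial connection is $W^u(x_1,0)\cap W^s(x_0,0)$. These separatrices climb to $y=O(F)$, where $y\gg g(x)$. Your phase function $\Phi$ only describes orbits hugging the axis, so it says nothing about them. There are two smaller errors as well: those axis orbits behave like $y\simeq C(x-x_0)^{-\sigma_0}$, with a negative exponent, and $1/\sigma_1=Fx_0(x_1-x_e)/(x_1-x_0)$ grows linearly in $F$ rather than being $F$-independent.

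\textbf{The target statement itself cannot be proved.} The correct singular limit uses the scaling $Y=y/F$, $\tau=Ft$:
\[
x'=x\Bigl(\frac{g(x)}{F}-Y\Bigr),\qquad Y'=Y(x-x_e),\qquad H:=Y+x-x_e\ln x,\qquad \frac{dH}{dY}=\frac{g(x)}{FY}.
\]
Along each separatrix up to the section, $g(x_i)=0$ and $x-x_i=O(Y)$ near the saddle, so $dH/dY=O(1/F)$. Hence on $x=x_e$ the two separatrices lie within $O(1/F)$ of the level sets $\{H=\phi(x_1)\}$ and $\{H=\phi(x_0)\}$, where $\phi(x)=x-x_e\ln x$.

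Their splitting is therefore $(x_1-x_0)-x_e\ln(x_1/x_0)+O(1/F)$, which has a transversal zero. It follows that $x_e^h(F)\to L:=\frac{x_1-x_0}{\ln x_1-\ln x_0}$, the logarithmic mean. This exceeds $\sqrt{x_0x_1}$, which in turn exceeds $x_e^\ast$; for $x_0=1$, $x_1=3$ one gets $L\approx 1.82$ against $x_e^\ast=1.5$.

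As a check, the same scaling applied to the PWL field gives straight fibres. Their matching condition $(x_1-x_e)^2/x_1=(x_e-x_0)^2/x_0$ yields $x_e=\sqrt{x_0x_1}$, exactly the PWL asymptote.

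Consequently your claim of no connection for $x_e\ge x_e^\ast+\delta$ and $F\ge F_\delta$ is false whenever $\delta<L-x_e^\ast$. No barrier-curve or Dulac normal-form construction can establish it. Only the non-optimal bound $x_e^h(F)\ge x_e^\ast$ from Corollary~\ref{cor:unstableconnection} survives.

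\textbf{The paper's proof does not close this either.} Appendix~\ref{sec:optimalbound} freezes $x$ at $x_1$ and $x_0$ on the vertical legs. Its closing condition $\delta_1+\delta_3=0$ is then algebraically identical to $f_\lambda=1$. The PWL field has the same $P$ and $Q$ on $x=x_0$ and $x=x_1$, so the same computation would wrongly predict the harmonic mean there too.
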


\begin{coro}\label{cor:NLC}
	The region without limit cycles is
	$$
	\Omega_{NLC}=\left\{(x_e, f) \in (x_0, x_1) \times \mathbb{R}^{+} \,\middle|\, \left( x_0 \leq x_e \leq \frac{2x_0 x_1}{x_0 + x_1}\right)\cup \left(\frac{x_0 + x_1}{2} \leq x_e \leq x_1\right) \right\}.
	$$
	\begin{proof}
		For $x_e < \frac{2x_0 x_1}{x_0 + x_1}$, it is known that the heteroclinic connection cannot occur. Therefore, no mechanism for the creation or destruction of limit cycles exist in this region. From Proposition~\ref{th:BD}, it is known that there is no limit cycle for $\frac{x_0 +x_1}{2}\leq x_e < x_1$.
	\end{proof}
\end{coro}

\begin{lema}\label{lemma:analytic}
	The curve $x_{e}^{h}(F)$ is analytic except for $F=0$.
\begin{proof}
The proof follows the classical analytic approach used by Perko for the
Bogdanov--Takens heteroclinic curve \cite{Perko1992}, adapted to the parameters
$\vec{\mu}=(x_e,F)$ of our system.

Since the vector field is analytic in $(x,y,\vec{\mu})$, the analytic
dependence theorem implies that solutions, and therefore the global stable and
unstable manifolds of the saddle points $x_0$ and $x_1$, depend analytically on
$\vec{\mu}$.  For each $\vec{\mu}$ we consider their first intersection with the
vertical line $x=x_e$ and define an analytic Melnikov-type function
$D(\vec{\mu})$ giving the signed distance between these two manifolds on this
section.  A heteroclinic connection occurs precisely when $D(\vec{\mu}_h)=0$ for
some heteroclinic parameter $\vec{\mu}_h=(x_e^h,F^h)$.

The rotated vector field property (see more details in Appendix~\ref{sec:rvf}) ensures that the
manifolds move monotonically with respect to $x_e$, so
$\partial_{x_e}D(\vec{\mu}_h)\neq 0$.  Hence, the implicit function
theorem applies, yielding a local analytic function $x_e^{h}(F)$  such that $D(x_e^{h}(F),F)=0$ for all $F$ sufficiently close to any heteroclinic value $F^h\neq 0$.  Since the
heteroclinic connection is unique and persists smoothly for $F>0$, these local
analytic branches glue together, showing that $x_e^{h}(F)$ is analytic for every
$F>0$.

Finally, Corollary~\ref{cor:CC} shows that the canard curve
$\lambda_c(\sqrt{F})$ is non-analytic at $F=0$, and since this curve provides a
local representation of the heteroclinic bifurcation curve near $(\lambda,F)=(0,0)$,
the function $x_e^{h}(F)$ cannot be analytic at $F=0$.  
\end{proof}
\end{lema}

\begin{pro}
The heteroclinic connection bifurcation curve can be expressed as $F^{h}(x_{e})$ and it is monotone.
\end{pro}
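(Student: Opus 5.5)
To prove that the heteroclinic curve is a graph $F^{h}(x_e)$ and is monotone, I would show that $F$ is also a rotating parameter on the relevant region, in the same way that $x_e$ is in Proposition~\ref{pro:xe_f}, and then combine the two monotonicities. The natural region is $\Omega$, where the connection from $W^u(x_0,0)$ to $W^s(x_1,0)$ lies, so that $P(x,y)=x\bigl(-\tfrac{(x-x_0)(x-x_1)}{x_0x_1}-y\bigr)<0$.

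The first step is to compute
\[
\Theta_F=\begin{vmatrix} P & Q\\ \partial_F P & \partial_F Q\end{vmatrix}=P\cdot y(x-x_e),
\]
since $\partial_F P=0$ and $\partial_F Q=y(x-x_e)$. This determinant does \emph{not} have a fixed sign on $\Omega$: it changes sign across the line $x=x_e$. So $F$ is not a global rotation parameter. Instead I would rescale time by $1/F$ on the half-plane $y>0$, which preserves orbits, and write the field as $(P/F,\;y(x-x_e))$. Increasing $F$ then only slows the horizontal component.

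With this form, I would compare the slopes of orbits, $dy/dx=Fy(x-x_e)/P$. For $x<x_e$ both $y(x-x_e)$ and $P$ are negative, so the slope is positive and increases in absolute value with $F$. For $x>x_e$ the slope is negative and its absolute value also increases with $F$.

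Next I would track the unstable manifold of $(x_0,0)$ up to the section $x=x_e$, where its height is $y_u(F)$, and the stable manifold of $(x_1,0)$ backwards to the same section, where its height is $y_s(F)$. Define $D(x_e,F)=y_u-y_s$, the same Melnikov-type function used in Lemma~\ref{lemma:analytic}. A comparison argument (two orbits cannot cross, and steeper slopes push the graph upward) should show that $y_u$ increases with $F$. Similarly, following the stable manifold backward from $x_1$, the steeper negative slope makes it rise faster, so $y_s$ also increases with $F$. This is the main obstacle: the two heights move in the same direction, so the sign of $\partial_F D$ is not immediate.

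The way I would try to resolve this is to write each height as an integral along the manifold. For example,
\[
\log y_u=\int_{x_0}^{x_e}\frac{F(x-x_e)}{P(x,y)/y}\,dx
\]
along the branch, with the divergent contribution near the saddle controlled by the eigenvector slopes given in Proposition 3.4. I would then compare the growth rates using the fact that on the heteroclinic curve $x_e$ lies in $[\tfrac{2x_0x_1}{x_0+x_1},\tfrac{x_0+x_1}{2})$ (Corollary~\ref{cor:unstableconnection}). In the large-$F$ regime this is governed by the hyperbolicity ratio $f_\lambda\ge 1$.

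If a direct sign computation fails, I would use a fallback based only on earlier results. Lemma~\ref{lemma:analytic} gives that $x_e^{h}(F)$ is analytic for $F>0$. Corollary~\ref{cor:CC} gives $x_e^h(F)\approx\tfrac{x_0+x_1}{2}-F/((x_1-x_0)^2x_0x_1)$, which is decreasing near $F=0$. Lemma~\ref{lem:optimal_bound} gives that $x_e^h(F)$ decreases to $x_e^\ast$ as $F\to\infty$.

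Strict monotonicity then reduces to showing that $x_e^h{}'(F)\neq0$, or equivalently $\partial_F D\neq0$ on the curve. Suppose some $F^\ast$ were a critical point. Then near $F^\ast$ a horizontal line $x_e=\text{const}$ would meet the curve twice, giving two heteroclinic values of $F$ for a single $x_e$. I would exclude this using the uniqueness of the limit cycle (Corollary~\ref{coro:uniqueness}) along that line: moving in $F$, the cycle would have to be destroyed and then recreated at the heteroclinic loop. Since there is no Hopf bifurcation in $F$ at fixed $x_e<\tfrac{x_0+x_1}{2}$, the only birth mechanism is the polycycle, and the hyperbolicity ratio there has a fixed sign, so this is impossible.

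Once $\partial_F D\neq0$ is established, the implicit function theorem gives the graph $F^h(x_e)$, and monotonicity follows because $x_e^h$ is strictly decreasing.
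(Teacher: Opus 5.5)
You have located the real difficulty correctly, but the proposal does not get past it.

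Since $\partial_F P=0$, the determinant $\Theta_F=P\,y\,(x-x_e)$ changes sign at $x=x_e$. The slope comparison (the same one the paper's proof uses) gives $y_2>y_1$ near \emph{both} saddles. The reason is that both eigenvector slopes, $\frac{x_1-x_0+Fx_1(x_e-x_0)}{x_0x_1}$ and $-\frac{x_1-x_0+Fx_0(x_1-x_e)}{x_0x_1}$, steepen as $F$ grows, so both section heights increase with $F$. (The interior branches are $W^s(x_0,0)$ and $W^u(x_1,0)$; $W^u(x_0,0)$ and $W^s(x_1,0)$ lie on the axis. This relabelling does not affect the slope comparison.)

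The paper's proof passes this point by asserting that the two branches move monotonically ``in opposite directions'' and concluding that $S=U$ has at most one solution in $F$. The comparison argument in fact makes $S$ and $U$ both increasing, so that inference does not follow as written. An additional idea is genuinely needed exactly here.

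Your integral route does not supply it. It is only a sketch, and the integrand for $\log y$ along the branch should be $F(x-x_e)/P$, not $F(x-x_e)/(P/y)$. Moreover, $f_\lambda$ is independent of $F$ and only governs the passage near the saddles, so it carries no information about how $S-U$ varies with $F$.

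The fallback fails at its decisive step. For fixed $x_e\in\bigl(\tfrac{2x_0x_1}{x_0+x_1},\tfrac{x_0+x_1}{2}\bigr)$, the ratio $f_\lambda=\frac{(x_e-x_0)x_1}{(x_1-x_e)x_0}>1$ does not depend on $F$. So the loop is attracting at every heteroclinic value on that line. Breaking it creates one stable cycle on the side of the splitting where $W^u(x_1,0)$ passes inside $W^s(x_0,0)$, and none on the other side. Whether that side is $F>F^{h}$ or $F<F^{h}$ is exactly the sign of $\partial_F D$ at that crossing, which is the unknown.

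A birth and a death at an attracting loop are the same local event traversed in opposite parameter directions. Hence a sequence collapse $\to$ cycle $\to$ collapse $\to$ cycle as $F$ increases (three zeros of $D(x_e,\cdot)$) is consistent with Corollary~\ref{coro:uniqueness}, with the absence of Hopf points in $F$, and with the fixed sign of $f_\lambda$. Analyticity of $x_e^h$ and its behaviour as $F\to0$ and $F\to\infty$ do not exclude an interior extremum either; note that Lemma~\ref{lem:optimal_bound} gives only a limit, not monotone decrease.

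What is missing is an actual sign for $\partial_F D$ on $\{D=0\}$, or some other global reason why $D(x_e,\cdot)$ has a single zero. For instance, the Melnikov-type formula $\partial_F D\propto\int_{\mathbb{R}}\exp\bigl(-\int_0^t\mathrm{div}\bigr)\,P\,y\,(x-x_e)\,dt$ along the connection has an integrand that changes sign at $x=x_e$. You would have to prove that this integral nevertheless has a definite sign.
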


\begin{proof}
From boundedness and the fact that the invariant manifolds $W^{s}(x_{0})$ and $W^{u}(x_{1})$
are bounded from below by the prey nullcline
\[
y_{0}(x)=-\frac{(x-x_{0})(x-x_{1})}{x_{0}x_{1}},
\]
it follows that their first intersections with the vertical line $x=x_{e}$ are finite.
Moreover, in the region $\Omega$ (introduced in Section~\ref{subsec:rvfmodel}) between the equilibria and
these intersections one has $y>y_{0}(x)$ and therefore $\dot x=x(y_{0}(x)-y)<0$, so $x(t)$ is strictly
monotone. Hence, each of the relevant separatrix branches can be written, up to its first
intersection with $x=x_{e}$, as a graph $y=y(x)$.

Along such graphs we have
\[
\frac{dy}{dx}=\frac{\dot y}{\dot x}
=\frac{F\,y(x-x_{e})}{x\bigl(y_{0}(x)-y\bigr)}.
\]

Fix $x_e$ and consider $0<F_{1}<F_{2}$. Let $y_{1}(x)$ and $y_{2}(x)$ be the graphs of the same
separatrix branch (either of $W^{s}(x_{0})$ or of $W^{u}(x_{1})$) corresponding to $F_1$ and $F_2$.
Near the saddle one has $y_{2}(x)>y_{1}(x)$. Suppose by contradiction that the graphs intersect, and
let $x^{*}$ be the first contact point (moving from the saddle towards $x=x_e$) such that
$y_{1}(x^{*})=y_{2}(x^{*})$. Then necessarily $y_{2}'(x^{*})\le y_{1}'(x^{*})$. However, evaluating the
above equation at $x^{*}$ yields
\[
y_{2}'(x^{*})-y_{1}'(x^{*})
=(F_{2}-F_{1})\frac{y(x^{*}-x_{e})}{x^{*}\bigl(y_{0}(x^{*})-y\bigr)},
\]
whose sign is fixed in $\Omega$ and therefore cannot vanish. This contradicts the inequality at the
first contact point. Hence the two graphs cannot intersect and are strictly ordered. In particular,
the height of the first intersection with $x=x_e$ depends strictly monotonically on $F$.

Denote by $S(F,x_e)$ and $U(F,x_e)$ the heights of the first intersections of the relevant branches
of $W^{s}(x_{0})$ and $W^{u}(x_{1})$ with the section $x=x_e$. A heteroclinic connection occurs if and
only if $S(F,x_e)=U(F,x_e)$. Since, for fixed $x_e$, the branches of $W^{s}(x_{0})$ and $W^{u}(x_{1})$
move monotonically with respect to $F$ (in opposite directions), the functions
$F\mapsto S(F,x_e)$ and $F\mapsto U(F,x_e)$ are strictly monotone. Consequently, the equality
$S(F,x_e)=U(F,x_e)$ can be satisfied for at most one value of $F$, and hence the intersection between
$W^{s}(x_{0})$ and $W^{u}(x_{1})$ is unique when $x_e$ is fixed. Therefore, the heteroclinic
bifurcation set is the graph of a function $F=F^{h}(x_e)$. Due to the obtained result and Proposition~\ref{pro:xe_f}, the function is one-to-one. Since the heteroclinic bifurcation curve can be written as $x_e^{h}(F)$ and this function is analytic (by Lemma~\ref{lemma:analytic}) and one-to-one, its inverse $F^{h}(x_e)$ is also analytic by the
analytic inverse function theorem. In particular, $F^{h}(x_e)$ is continuous and bijective on an
interval, and therefore it is strictly monotone.

\end{proof}

\begin{tho}\label{th:HBC}
The heteroclinic bifurcation set consists of a single curve that satisfies the following properties:
\begin{enumerate}[(i)]
    \item \textit{Graph property.}  
    The heteroclinic bifurcation curve is the graph of a strictly monotone function $F^{h}(x_e)$ (equivalently $x_{e}^{h}(F)$).

    \item \textit{Analyticity.}  
    The function \(x_e^{h}(F)\) is analytic for all \(F>0\) and loses analyticity only at
    \(F=0\).

    \item \textit{Bounds.}  
    For every \(F>0\), the function \(x_e^{h}(F)\) is optimally bounded by
    $$\frac{2x_0 x_1}{x_0 + x_1}\;\le\;x_e^{h}(F)\;<\;\frac{x_0 + x_1}{2}.$$

    \item \textit{Behaviour near $F=0$.}  
    The curve begins as
    $$ \lambda_c\!(\sqrt{F})
        = 
        -\frac{1}{(x_1 - x_0)^2 x_0 x_1}\,F
        + \mathcal{O}\!\left(F^{3/2}\right).$$   
    \item \textit{Asymptotic behaviour as $F\to\infty$.}  
    The heteroclinic curve satisfies
        $$\lim\limits_{F\rightarrow \infty}x_e^{h}(F)=\dfrac{2x_0x_1}{x_{0}+x_{1}}.$$
\end{enumerate}
\end{tho}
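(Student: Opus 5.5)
Theorem~\ref{th:HBC} collects results already established in this section, so the plan is to assemble them item by item, adding only the glue needed to make the bifurcation set a \emph{single} curve defined for every $F>0$.

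The first step is existence for every $F>0$. Fix $F>0$ and follow the two separatrix branches, the unstable branch of $(x_0,0)$ and the stable branch of $(x_1,0)$, up to their first intersections with the section $x=x_e$. Let $S(F,x_e)$ and $U(F,x_e)$ be the heights of these intersections, as in the proof of monotonicity of $F^h$. Define $D=S-U$, which is continuous in $x_e$ by analytic dependence on parameters. At $x_e=\frac{x_0+x_1}{2}$ the coexistence equilibrium is a stable weak focus and there is no limit cycle (Theorem~\ref{th:BD}, Theorem~\ref{hopf}). At $x_e$ slightly below $\frac{2x_0x_1}{x_0+x_1}$ the polycycle could only be repelling, and no connection exists (Corollary~\ref{cor:unstableconnection}). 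I would argue that the relative position of the two separatrices is opposite at the two ends of this interval. At the upper end, the unstable branch of $x_0$ enters the basin of the focus from one side; at the lower end, the configuration is reversed, because otherwise the $\omega$-limit set of the unstable branch of $x_0$ would have to be a limit cycle or the repelling equilibrium. The intermediate value theorem then gives a zero $x_e^h(F)$ in the interval, which yields item (iii) except for optimality. Uniqueness of the zero for fixed $F$ is Proposition~\ref{pro:xe_f}, via the rotated vector field determinant $\Theta_{x_e}$ on $\Omega$. Hence the bifurcation set is the graph of a single function over $F>0$, and not a union of branches.

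Items (i) and (ii) then follow directly. The graph property and strict monotonicity come from Proposition~\ref{pro:xe_f} together with the subsequent proposition giving $F^h(x_e)$ as a strictly monotone function. Analyticity for $F>0$ and its failure at $F=0$ is Lemma~\ref{lemma:analytic}, which rests on the implicit function theorem applied to $D$ with $\partial_{x_e}D\neq0$ from the rotation property.

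Item (iv) is Corollary~\ref{cor:CC} combined with the corollary identifying the canard curve with the heteroclinic curve near $(\lambda,F)=(0,0)$. Item (v) is Lemma~\ref{lem:optimal_bound}. The optimality in (iii) then follows from two facts: the lower bound is attained as the limit in (v), and the upper bound is approached as $F\to0$ by (iv), since $\lambda_c\to0^-$. Strictness of the upper inequality follows because at $\lambda=0$ the equilibrium is stable and the Hopf cycle has not yet been born.

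The main obstacle is the sign-change argument in the existence step. It requires controlling where the separatrices go at the endpoints of the interval, and Corollary~\ref{cor:unstableconnection} only gives a necessary condition, not the needed sign information. My first attempt would use the Poincaré--Bendixson theorem inside the absorbing set $\mathcal{B}$. At $x_e=\frac{x_0+x_1}{2}$, the unstable branch of $x_0$ cannot accumulate on the stable equilibrium while passing above the stable separatrix of $x_1$. Near the lower endpoint, the absence of limit cycles (Corollary~\ref{cor:NLC}) together with the repelling equilibrium forces that branch to reach $P_0^*$, or to cross $x=x_e$ below $U$. This fixes opposite signs of $D$ at the two ends.
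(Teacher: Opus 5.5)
Your treatment of (i)--(v) matches the paper. Theorem~\ref{th:HBC} has no separate proof there; it simply collects Proposition~\ref{pro:xe_f}, the monotonicity of $F^h$, Lemma~\ref{lemma:analytic}, Corollaries~\ref{cor:unstableconnection} and~\ref{cor:CC}, and Lemma~\ref{lem:optimal_bound}. What you add is the existence step. It targets a real weakness, since the paper essentially infers existence from Corollary~\ref{cor:unstableconnection} and Proposition~\ref{pro:dieLC}, but as written it does not work.

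\emph{First gap: the separatrices are mislabelled.} From the eigenvectors in the local analysis, the separatrix of $(x_0,0)$ entering $y>0$ is its \emph{stable} manifold (tangent to $v_-^0$). The one of $(x_1,0)$ is its \emph{unstable} manifold (tangent to $v_+^1$). By contrast, $W^u(x_0,0)$ and $W^s(x_1,0)$ lie on the axis. So ``the $\omega$-limit of the unstable branch of $x_0$'' is always $(x_1,0)$, and your endpoint arguments carry no information. With the right objects, and writing $x_c=\frac{2x_0x_1}{x_0+x_1}$, $x_H=\frac{x_0+x_1}{2}$, the signs come out as follows.
\begin{itemize}
\item At the lower end, suppose $U<S$. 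Then $W^u(x_1,0)$ is trapped below $W^s(x_0,0)$ in $x>x_0$. Since $P_2^*$ is repelling and there is no connection, Poincar\'e--Bendixson forces its $\omega$-limit to be a periodic orbit.
\item At $x_e=x_H$ you need an $\alpha$-limit argument instead. Suppose $U>S$. Then $W^u(x_1,0)$ passes above $W^s(x_0,0)$, crosses $x=x_0$ and tends to $(0,0)$. Hence $W^s(x_0,0)$ is confined in backward time to the bounded region enclosed by $W^u(x_1,0)$ and the axis. Its $\alpha$-limit would have to be $P_2^*$, a cycle or a graphic. None is available, because $P_2^*$ is an attracting weak focus ($L_1<0$).
\end{itemize}

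\emph{Second gap: the lower-end step relies on Corollary~\ref{cor:NLC}.} The paper justifies it by ``no mechanism for creation or destruction'' below $x_c$, which only shows the number of cycles is constant there. Setting that number to zero presupposes that the Hopf cycle has already died at a connection in $[x_c,x_H)$, which is exactly what you want to prove.

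A self-contained replacement is the Dulac family of Theorem~\ref{th:BD} with a specific exponent. Write $X=(P,Q)$ for the field of \eqref{eq:4} and take $B=x^{-1}y^{\beta-1}$ with $F\beta x_0x_1=x_0+x_1$. Then
\[
\mathrm{div}(BX)=\frac{y^{\beta-1}}{x\,x_0x_1}\Bigl(-2(x-x_0)(x-x_1)+(x_0+x_1)(x_c-x_e)\Bigr)
=\frac{y^{\beta-1}}{x\,x_0x_1}\Bigl(-2(x-x_H)^2-(x_0+x_1)(x_e-x_H)\Bigr).
\]
Every cycle lies in the strip $x_0<x<x_1$, by the argument of Corollary~\ref{coro:uniqueness}. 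On that strip the divergence is positive for $x_e\le x_c$. For $x_e\ge x_H$ it is nonpositive and vanishes only on the line $x=x_H$.

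This excludes cycles at both ends at once, including $x_e=x_H$, which Theorem~\ref{th:BD} treats only heuristically. It also excludes the heteroclinic loop, by Green's theorem, since $y^{\beta-1}$ is integrable and $BQ\to0$ as $y\to0$. With both fixes, your intermediate value argument together with Proposition~\ref{pro:xe_f} gives exactly one connection for every $F>0$. It even gives the strict bound $x_e^h(F)>x_c$.
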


\subsubsection{Effect of habitat destruction}\label{subsec:habitatdestruction}

The parameter space in $(x_e,F)$ naturally splits into three generic ecological
regimes (see Figure~\ref{fig:collapse}):  
(i) a \emph{collapse region}, where both the resource and the consumer extinguish;  
(ii) a \emph{potential persistent oscillation regime}, where self-sustained oscillations may occur
for certain values of $(x_{e},F)$; and  
(iii) a \emph{static coexistence region}, where the resource and consumer tend to the stable coexistence equilibrium
$(x_e,y_e)$.

\medskip
\noindent
We first analyse the collapse region.  
As shown above, the system collapses for all values of~$F$ whenever
$$
x_0 < x_e < x_c,
\qquad
x_c=\frac{2x_0x_1}{x_0+x_1}.
$$
Thus, the interval $(x_0,x_c)$ corresponds to the left grey region in
Figure~\ref{fig:collapse}.  
Its relative size within $(x_0,x_1)$ is
$$
R_c=\frac{x_c-x_0}{x_1-x_0}
    =\frac{x_0}{x_0+x_1}
    =\frac12\!\left(
      1-\sqrt{1-\frac{4\varepsilon}{\alpha(1-D)^2}}
      \right),
$$
after expressing $x_0$ and $x_1$ in the original parameters.
The admissible range for habitat destruction is
$$
0 < D < 1 - 2\sqrt{\varepsilon/\alpha},
$$
which ensures $x_0<x_1$ (see Section~\ref{subsubsec:saddlenode}).  
Within this range, $R_c$ increases monotonically with~$D$ and satisfies
that $R_c$ goes to $\tfrac12$ as $D$ approaches the saddle-node value
$D_{\mathrm{SN}}=1-2\sqrt{\varepsilon/\alpha}$.  
Thus, habitat destruction simultaneously contracts the interval $(x_0,x_1)$ and
enlarges the portion of $x_e$ leading unavoidably to collapse.

\medskip
\noindent
Next, consider the oscillatory regime.  
Persistent oscillations may only occur for
$$
x_c < x_e < x_H,
\qquad x_H=\frac{x_0+x_1}{2},
$$
whose relative size is
$$
R_o=\frac{x_H-x_c}{x_1-x_0}.
$$
As $D$ increases, $x_c$ moves rightwards and $(x_0,x_1)$ contracts, causing
$R_o$ to decrease monotonically until it vanishes at the saddle-node threshold.  Points in this region lead to exactly two possible outcomes: either one
persistent oscillation or extinction, depending on whether the parameters lie
between the Hopf and the heteroclinic curves or not.

\medskip
\noindent
Finally, the static coexistence region corresponds to
$$
R_s=\frac{x_1-x_H}{x_1-x_0}=\frac12.
$$
This fraction is independent of habitat destruction~$D$ because $x_H$ always lies
at the midpoint of the interval $(x_0,x_1)$.  
Hence, while $R_c$ and $R_o$ vary with $D$, the static coexistence region remains
a constant half of the parameter space.

\medskip
\noindent
In summary, the three relative sizes satisfy
$$
R_c + R_o + R_s = 1.
$$
Increasing habitat destruction compresses the oscillatory region and expands the
collapse region, while the static coexistence region remains fixed.  
As $D$ approaches the saddle-node threshold, the oscillatory window disappears,
signalling a sharp loss of ecological resilience.

 \begin{figure}[h]
	\centering
	\begin{overpic}[width=0.58\textwidth]{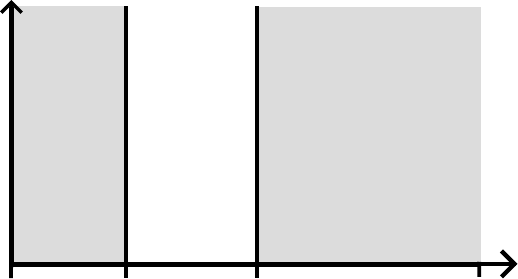}
		\put(-0,-4){$x_{0}$}
		\put(-5,50){$F$}
        \put(5,35){\text{Collapse}}
        \put(28,35){\text{Potential}}
        \put(27,30){\text{persistent}}
        \put(27,25){\text{oscillatory }}
        \put(30,20){\text{regime}}
		\put(47,-4){$x_{H}$}
        \put(53,35){\text{Static coexistence}}
		\put(22,-4){$x_{c}$}
		\put(90,-4){$x_{1}$}
        \put(3,5){\vector(1,0){21}}
\put(24,5){\vector(-1,0){21}}
\put(7,8){$x_{c}-x_{0}$}
		\put(100,-4){$x_{e}$}
        \put(25,5){\vector(1,0){24}}
        \put(49,5){\vector(-1,0){24}}
        \put(30,8){$x_{H}-x_{c}$}
        \put(51,5){\vector(1,0){42}}
        \put(92,5){\vector(-1,0){42}}
        \put(63,8){$x_{1}-x_{H}$}
	\end{overpic}
	\vspace{1cm}
    	 \captionsetup{width=\textwidth}
	   \caption{
Parameter space in $(x_{e},F)$ showing the different ecological regimes.
The grey region $\Omega_{\mathrm{NLC}}$ corresponds to the \emph{no-limit-cycle
regime} (see Corollary~\ref{cor:NLC}); it includes both the collapse region, where
the resource and the consumer unavoidably go extinct, and the static coexistence
regime, where $(x_e,y_e)$ is a stable equilibrium. The central white region represents the \emph{potential persistent oscillation
regime}. Points in this region lead to exactly two possible outcomes: either one
persistent oscillation or extinction, depending on whether the parameters lie
between the Hopf and the heteroclinic curves or not. As habitat destruction $D$ increases, the interval $(x_0,x_1)$ contracts, the
collapse region within $\Omega_{\mathrm{NLC}}$ expands, and the potential oscillatory
region shrinks, eventually disappearing when the saddle-node bifurcation is
reached.
}
\label{fig:collapse}
\end{figure}

If one focuses in the potential persistent oscillatory regime 

\subsection{Description of parameter space}
In this subsection we describe the parameter space $(x_e,F)$, illustrating how the different dynamical regimes are separated by the bifurcation curves previously characterised. For each region we summarise the corresponding phase-space behaviour together with its ecological interpretation.

\begin{enumerate}[(a)]

\item 
$\Omega_{1}
=\Bigl\{(x_e,F)\in(x_0,x_1)\times\mathbb{R}^{+}\,\bigm|\,
\frac{x_0+x_1}{2}\le x_e < x_1\Bigr\}.$ 
All trajectories in the basin of attraction tend
to this equilibrium, leading to a stable coexistence of the consumer and the
vegetation.

\item 
$\Omega_{2}
=\Bigl\{(x_e,F)\in(x_0,x_1)\times\mathbb{R}^{+}\,\bigm|\,
\frac{2x_0x_1}{x_0+x_1}\le x_e < \frac{x_0+x_1}{2}
\ \text{ and }\ x_e > x_e(F)\Bigr\}.$  
All trajectories in the basin of
attraction are drawn to a unique stable limit cycle. This region corresponds to sustained predator-prey oscillations.

\item 
$\Omega_{3}
=\Bigl\{(x_e,F)\in(x_0,x_1)\times\mathbb{R}^{+}\,\bigm|\,
\frac{2x_0x_1}{x_0+x_1}\le x_e < \frac{x_0+x_1}{2}
\ \text{ and }\ x_e = x_e(F)\Bigr\}.$  
 All trajectories in the basin of attraction tend to the heteroclinic connection. This phase portrait is structurally unstable, as any perturbation of $(x_e,F)$
destroys the connection, tending to either a limit cycle o collapse.

\item 
$\Omega_{4}
=\Bigl\{(x_e,F)\in(x_0,x_1)\times\mathbb{R}^{+}\,\bigm|\,
\frac{2x_0x_1}{x_0+x_1} \le x_e < \frac{x_0+x_1}{2}
\ \text{ and }\ x_e < x_e(F)
\Bigr\}.$  All trajectories tend to the extinction point $(0,0)$. This corresponds to ecological collapse: oscillations grow in period and amplitude until the consumer and vegetation both go extinct.
\end{enumerate}

In the regions $\Omega_{1}$, $\Omega_{2},$ and $\Omega_{3}$, the basin of attraction
is bounded by the invariant axis $y=0$ and the stable manifold of $x_{0}$, which
forms the upper invariant barrier separating collapse from persistence. Any
trajectory that starts within this basin necessarily has its forward-time limit
set inside it: in $\Omega_{1}$ it converges to the stable coexistence equilibrium,
in $\Omega_{2}$ to the stable limit cycle, and in $\Omega_{3}$ it approaches the
heteroclinic connection.

\begin{figure}[h]
	\centering
	\begin{overpic}[width=0.5\textwidth]{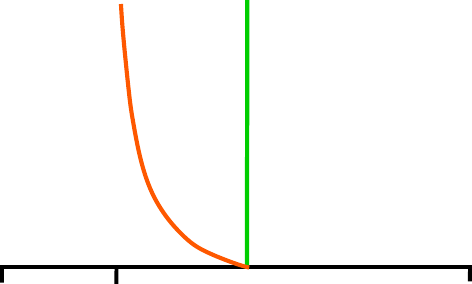}
		\put(-0.5,-4){$x_{0}$}
		\put(20,-4){$\frac{2x_0 x_1}{x_{0}+x_{1}}$}
		\put(48,-4){$\frac{x_{0}+x_{1}}{2}$}
		\put(98,-4){$x_{1}$}
        \put(78,44){$\Omega_{1}$}
        \put(38,44){$\Omega_{2}$}
        \put(12,24){$\Omega_{4}$}
	\end{overpic}
	\vspace{0.3cm}
    	 \captionsetup{width=\textwidth}
	\caption{Parameter space $(x_{e},F)$. The orange curve of the form $x_{e}(F)$ corresponds to the heteroclinic bifurcation curve. The green curve corresponds to the Hopf bifurcation.}
\end{figure}

\vspace{0.5cm}

\begin{figure}[h]
    \centering

    \begin{minipage}{0.95\textwidth}
        \centering
        \begin{subfigure}[t]{0.2\textwidth}
            \centering
            \begin{overpic}[width=\textwidth]{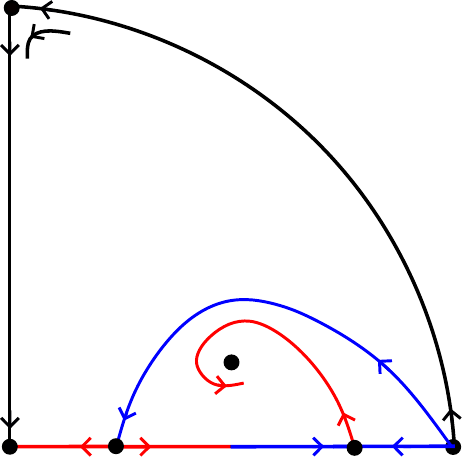}
            \end{overpic}
            \caption{$\Omega_{1}$.}
            \label{fig:om1}
        \end{subfigure}
        \hspace{0.05\textwidth} 
        \begin{subfigure}[t]{0.2\textwidth}
            \centering
            \begin{overpic}[width=\textwidth]{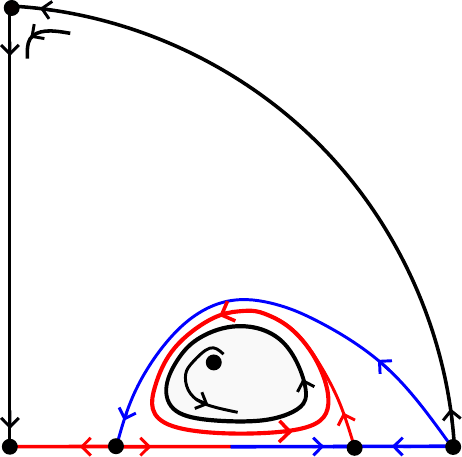}
            \end{overpic}
            \caption{$\Omega_{2}$.}
            \label{fig:om2}
        \end{subfigure}
        \hspace{0.05\textwidth}
        \begin{subfigure}[t]{0.2\textwidth}
            \begin{overpic}[width=\textwidth]{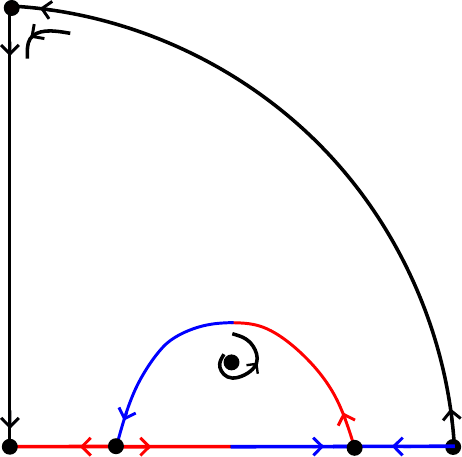}
            \end{overpic}
            \caption{$\Omega_{3}$.}
            \label{fig:om3}
        \end{subfigure}
        \hspace{0.05\textwidth}
        \begin{subfigure}[t]{0.2\textwidth}
            \centering
            \begin{overpic}[width=\textwidth]{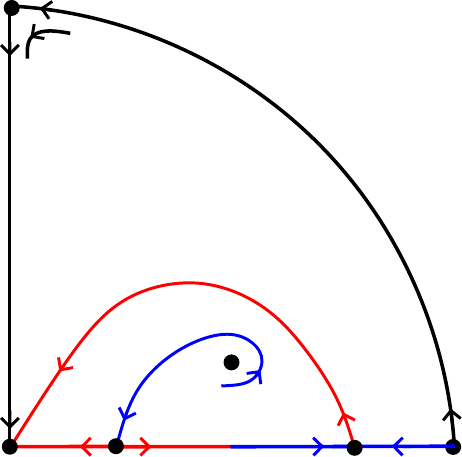}
            \end{overpic}
            \caption{$\Omega_{4}$.}
            \label{fig:om4}
        \end{subfigure}
    \end{minipage}

    \captionsetup{width=\textwidth}
    \caption{Phase portraits of all the regions in the parameter space $(x_{e},F)$, where $F > 0$ and $x_0 < x_e < x_1$. Stable and unstable manifolds are colored in blue and red, respectively.}
    \label{fig:pporiginal}
\end{figure}

\subsection{Piecewise linear model}\label{PWLM}
In this section we introduce a piecewise linear (PWL) approximation of the 
original model.  
The motivation for this approach is twofold.  
First, the PWL system allows us to write the solutions of both vector fields 
$Z_{1}$ and $Z_{2}$ explicitly, which provides complete analytical control of the 
trajectories.  
Second, global phenomena such as the heteroclinic bifurcation become fully 
tractable in this setting, enabling us to derive the bifurcation curve in closed 
form and to understand its geometry in detail.  
Although the PWL model is a simplification, we show below that it reproduces the 
same qualitative behaviour as the nonlinear system.  
Moreover, PW systems are known to exhibit a rich variety of generic bifurcations (see, for instance, \cite{KuzRinGra2003}), and the assumption of linearity makes them 
an ideal framework for uncovering the mechanisms that generate the transitions 
observed in the original model.

\medskip

We work under genericity conditions for which a coexistence equilibrium is present, 
and we focus on the region $x_{0}<x_{e}<x_{1}$.  
To analyse the dynamics we construct a PWL system that preserves the saddle points 
$x_{0}$ and $x_{1}$ together with their eigenvalues and eigenvectors.  
We use the vertical line 
\[
\Sigma=\{(x,y)\in\mathbb{R}^{+}\times\mathbb{R}^{+} \mid x = x_e\}
\]
as separation manifold between the two linear regions.  
The resulting PWL vector field is the following:

\begin{equation*}    \label{eq:pwlinear}
    Z=\begin{cases}
        Z_{1}\text{ if } (x,y)\in\Sigma_{1}=\{x_{0}<x<x_{e}\},\\
        Z_{2}\text{ if } (x,y)\in\Sigma_{2}=\{x_{e}<x<x_{1}\},
    \end{cases}
\end{equation*}
where
\begin{equation*}
    Z_{1}=\begin{cases}
        \dot{x}=\frac{(x_{1}-x_0)(x-x_{0})}{x_1}-x_0 y,\\
        \dot{y}=F(x_{0}-x_{e})y,
    \end{cases}
\end{equation*}
and
\begin{equation}
    Z_{2}=\begin{cases}
        \dot{x}=\frac{(-x_{1}+x_0)(x-x_{1})}{x_0}-x_1 y,\\
        \dot{y}=F(x_{1}-x_{e})y.
    \end{cases}
\end{equation}

\subsubsection{Qualitative features of the PWL system}

This section analyses the changes in the qualitative behaviour of the PWL system. We focus on understanding three main aspects: the pseudo-Hopf bifurcation, the heteroclinic bifurcation, and the no-return region.

\vspace{0.3cm}
\textbf{Pseudo-Hopf Bifurcation}. This bifurcation is analogous to the Hopf bifurcation, with the difference that the limit cycle emerging from the change of stability of an equilibria, is instead a pseudoequilibria, a zero of the separation manifold. When studying the PWL vector field, one observes that the $x$-nullclines of $Z_1$ and $Z_2$ provide essential information about the monodromy. In particular, a double tangency occurs when these nullclines intersect in $\Sigma$, providing a well-known mechanism for the emergence of limit cycles. This bifurcation is denoted as generic pseudo-Hopf  (also denoted as $II_2$ in \cite{KuzRinGra2003})  and resembles the classical Andronov--Hopf bifurcation; however, instead of exchanging the stability of an equilibrium, it changes the stability of the sliding segment. The crossing limit cycle inherits stability opposite to that of the sliding segment. This phenomenon was originally described in \cite{Fil1988} and studied further in \cite{KuzRinGra2003}.

To analyse the dynamics near the discontinuity, we define the sliding vector field on $\Sigma$. After translating the double tangency point
\[
\left(\lambda + \frac{x_0 + x_1}{2}, \frac{(x_1 - x_0)^2}{2 x_0 x_1} \right)
\]
to the origin, the sliding vector field becomes:
\begin{equation*}
Z^s =
\begin{pmatrix}
0 \\
-\dfrac{
\left( (x_1 y + \lambda)x_0^2 + (x_1^2 y + 2\lambda(y - 1)x_1 + 2\lambda^2)x_0 + x_1 \lambda(2\lambda + x_1) \right) C F }{ 2 x_1 x_0 \left( (x_1 y + \lambda)x_0 + x_1 \lambda \right)}
\end{pmatrix},
\end{equation*}
where $C = \dfrac{x_0^2 + x_1^2}{2} + x_1(y - 1)x_0$ and $\lambda = x_e - \frac{x_0 + x_1}{2}$ measures the distance from the Hopf point.

The tangency points between the flow and the discontinuity are:
\[
T_1 = \frac{(x_1 - x_0)}{x_1 x_0} \lambda, \qquad T_2 = -\frac{(x_1 - x_0)}{x_1 x_0} \lambda.
\]
When $\lambda = 0$, both tangencies coincide and there is no sliding region.

The pseudoequilibrium is located at
\[
P_{\lambda} = -\lambda \frac{2\lambda(x_0 + x_1) + (x_1 - x_0)^2}{x_0 x_1 (2\lambda + x_0 + x_1)}.
\]
For small $\lambda$, we find $P_{\lambda} < 0$ if $\lambda > 0$ and $P_{\lambda} > 0$ if $\lambda < 0$. Evaluating the vector field at this point,
\[
Z^s|_{y = P_{\lambda}} = \frac{(x_0 + x_1)(2\lambda - x_1 + x_0)(2\lambda + x_1 - x_0(2\lambda + x_1 + x_0))F}{16\lambda x_0 x_1},
\]
shows that $P_{\lambda}$ is unstable for $\lambda < 0$ and stable for $\lambda > 0$.

The PWL system admits first integrals, which facilitates the study of its return map:
\begin{equation*}
H := \begin{cases}
H_1 = \frac{\left(-\frac{F x_1^3}{2} - F(x - x_0 + 2\lambda) x_1^2 + \left(\left(-\frac{x_0}{2} + x + \lambda\right)(x_0 - 2\lambda)F + 2x_0 y - 2x - 2\lambda\right)x_1 + 2x_0(x + \lambda)\right) A}{-x_1^2 F + (-2 + (x_0 - 2\lambda)F)x_1 + 2x_0},
\\
H_2 = \frac{\left(\frac{x_0^3 F}{2} + F(x - x_1 + 2\lambda)x_0^2 + \left(-\left(-\frac{x_1}{2} + x + \lambda\right)(x_1 - 2\lambda)F - 2x_1 y + 2x + 2\lambda\right)x_0 - 2x_1(x + \lambda)\right) B}{x_0^2 F + (2 + (2\lambda - x_1)F)x_0 - 2x_1}
\end{cases}
\end{equation*}
with
\begin{align*}
A &= \left(x_0^2 + 2x_1(y - 1)x_0 + x_1^2\right)^{\frac{2(x_0 - x_1)}{(x_0 - 2\lambda - x_1)F x_1}}, \\
B &= \left(x_0^2 + 2x_1(y - 1)x_0 + x_1^2\right)^{\frac{2(x_0 - x_1)}{(-x_1 + 2\lambda + x_0)F x_0}}.
\end{align*}

\begin{lema}
There is a pseudo-Hopf bifurcation at $x_e = \frac{x_0 + x_1}{2}$. The pseudoequilibrium becomes a weak focus of order 1, and for $\lambda < 0$, a stable limit cycle emerges. The first Lyapunov constant is
\[
V_1 = -\frac{8}{3F(x_1 - x_0)}.
\]
\end{lema}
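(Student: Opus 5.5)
The plan is to compute the half-return maps of the two linear pieces $Z_1$ and $Z_2$ on the switching line $\Sigma=\{x=x_e\}$ and compose them into a Poincaré map near the double tangency point. Linearity of each piece makes this explicit. I would then expand the displacement function in powers of the distance to the tangency point, at $\lambda=0$ and near it.

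\textbf{Step 1: monodromy at $\lambda=0$.} At $\lambda=0$ both tangencies $T_1=T_2$ coincide, so there is no sliding segment. Each piece must be checked to have its $x$-nullcline through the point $(x_H,y_H)$, with $y_H=(x_1-x_0)^2/(2x_0x_1)$ after translation, and to rotate in the same sense on either side. On the side $y>y_H$ the flow crosses $\Sigma$ from $\Sigma_2$ into $\Sigma_1$. On the side $y<y_H$ it returns, because $\dot y<0$ in $\Sigma_1$ and $\dot y>0$ in $\Sigma_2$ while $\dot x$ changes sign across the nullclines.

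\textbf{Step 2: displacement function.} Parametrize $\Sigma$ by $h=y-y_H$. For each $Z_i$, the half-return map $h\mapsto\pi_i(h)$ can be obtained without solving for the crossing time, since each $H_i$ is a first integral. Concretely, $\pi_i(h)$ solves $H_i(x_e,y_H+h)=H_i(x_e,y_H+\pi_i(h))$ with $\pi_i(h)$ on the opposite side of the tangency. Equivalently, one can use the standard expansion for a linear field near an invisible quadratic tangency:
\[
\pi_i(h)=-h+a_i h^2+\mathcal{O}(h^3).
\]
Here $a_i$ depends only on the ratio of the $\dot y$ component to the curvature of the orbit at the tangency. For a linear field this can be computed from the trace and the tangential data of $Z_i$.

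\textbf{Step 3: Lyapunov constant.} Compose the two half maps into $\Pi=\pi_1^{-1}\circ\pi_2$, or $\pi_1\circ\pi_2$ depending on orientation, so that
\[
\Pi(h)-h=V_1h^2+\mathcal{O}(h^3),\qquad V_1\propto a_2-a_1 .
\]
Substituting the eigen-data of $Z_1$ and $Z_2$, which equal the saddle data of $x_0$ and $x_1$, should give $V_1=-8/(3F(x_1-x_0))$. A negative value means the double tangency is a weak focus of order one that attracts. I expect the main obstacle to be this explicit second-order computation. The exponents in $A$ and $B$ make direct use of $H_i$ messy, so I would first compute $a_i$ in the translated and rescaled coordinates in which each linear field has a simple normal form, and only then undo the scaling.

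\textbf{Step 4: unfolding for $\lambda\neq 0$.} A sliding segment between $T_1$ and $T_2$ opens for $\lambda\neq0$. The displacement acquires a zeroth- and first-order term in $\lambda$, of the form $c\lambda+V_1h^2+\dots$, with the sign of $c$ fixed by the stability of the pseudoequilibrium $P_\lambda$ computed above. That pseudoequilibrium is unstable for $\lambda<0$. For $\lambda<0$ with $c\lambda$ of sign opposite to $V_1$, the displacement has a positive root $h^*\sim\sqrt{-c\lambda/V_1}$, giving a crossing limit cycle. The cycle is stable because $V_1<0$, in agreement with the stated rule that it inherits stability opposite to that of the sliding segment.
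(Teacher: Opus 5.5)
Your plan is correct and follows essentially the route the paper has in mind. The paper writes no proof for this lemma; it only supplies $T_{1,2}$, $P_\lambda$ and the first integrals $H_1,H_2$ for studying the return map, and your plan uses exactly these: half-return maps at the double tangency composed into a displacement $2(T_2-T_1)+V_1h^2+\dots$. Here $2(T_2-T_1)=-4(x_1-x_0)\lambda/(x_0x_1)$ is positive exactly when $\lambda<0$, i.e.\ when the sliding segment is escaping. This fixes the sign of your $c$ more cleanly than appealing to the stability of $P_\lambda$.

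One correction to Step 2, which also removes your ``main obstacle'': $a_i$ does not involve the orbit curvature. For a linear field at an invisible fold, $a_i=2\,\mathrm{tr}(Z_i)/(3\,\dot y_i)$, where $\dot y_i$ is the signed value of $\dot y$ at the tangency. At $\lambda=0$ one has $\dot y_2=-\dot y_1=F(x_1-x_0)^3/(4x_0x_1)$ and $\mathrm{tr}(Z_1)+\mathrm{tr}(Z_2)=-(x_1-x_0)^2/(x_0x_1)$, which give $a_2-a_1=-8/(3F(x_1-x_0))$ exactly.
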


\begin{remark}
A unique limit cycle is unfolded from the pseudo-Hopf. Moreover, the second Lyapunov constant vanishes. More concretely, it satisfies $V_2 = V_1^{2}$, i.e. it vanishes when $V_1=0$.
\end{remark}

\textbf{Heteroclinic Bifurcation Curve.} The heteroclinic bifurcation curve represents the same global phenomenon as in the 
classical smooth ordinary differential equations setting, with the key difference that the intersection of the 
invariant manifolds now occurs at the separation manifold $\Sigma$.  
The bifurcation takes place when the stable manifold of $x_0$ and the unstable 
manifold of $x_1$ meet on $\Sigma$, producing a global connection. As in the original model, the heteroclinic bifurcation in the PWL system is a 
codimension-one phenomenon.  
This is because there is only one nontrivial global connection between the saddles 
$x_{0}$ and $x_{1}$: once their invariant manifolds meet on $\Sigma$, the 
heteroclinic loop is completely determined.  
Hence, a single parameter is sufficient to unfold the bifurcation. 
In the PWL system this connection can be fully analysed, since both vector fields 
$Z_{1}$ and $Z_{2}$ admit explicit first integrals.  
As in the original nonlinear model, the heteroclinic connection triggers the 
destruction of the limit cycle, whose period becomes unbounded as it approaches the connection. Moreover, systems~\eqref{eq:4} and \eqref{eq:pwlinear} have the same value of the hyperbolicity ratio. Hence, the connection is stable, see Figure~\ref{fig:om3PWL}.

\begin{lema}
The heteroclinic bifurcation curve is
\[
F^{h}(x_e) = \frac{2\left(x_e - \frac{x_0 + x_1}{2} \right)}{(\sqrt{x_0 x_1} - x_e)(\sqrt{x_0 x_1} + x_e)}
\]
for $\sqrt{x_0 x_1} < x_e < \frac{x_0 + x_1}{2}$. This function is analytic, strictly decreasing, and satisfies:
\[
F^{h}\left( \frac{x_0 + x_1}{2} \right) = 0, \qquad
\dfrac{dF^{h}}{dx_{e}}\left( \frac{x_0 + x_1}{2} \right) = -\frac{8}{(x_1 - x_0)^2}.
\]
\end{lema}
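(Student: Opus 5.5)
The plan is to exploit the fact that each of $Z_1$ and $Z_2$ is linear. In a linear system the invariant manifolds of a hyperbolic saddle are straight lines spanned by the eigenvectors. The PWL field was built so that, at $(x_0,0)$ and $(x_1,0)$, it has the same eigenvalues and eigenvectors as system~\eqref{eq:4}. Hence the relevant branch of $W^s(x_0,0)$ inside $\Sigma_1$ is the half-line through $(x_0,0)$ in the direction $v_-^0$. Likewise, the relevant branch of $W^u(x_1,0)$ inside $\Sigma_2$ is the half-line through $(x_1,0)$ in the direction $v_+^1$. Both eigenvectors are the ones computed in the proposition on local behaviour of the equilibria. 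Since $v_-^0$ has positive first component and $v_+^1$ has negative first component, both half-lines with $y>0$ move towards $\Sigma=\{x=x_e\}$ and reach it without leaving their linear region. A heteroclinic connection therefore occurs exactly when the two half-lines hit $\Sigma$ at the same height.

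First I compute the two heights. Writing $v_-^0=(c_0,1)$ with $c_0=x_0x_1/(Fx_1(x_e-x_0)+x_1-x_0)$, the stable branch meets $\Sigma$ at
$$
y_S=\frac{(x_e-x_0)\bigl(Fx_1(x_e-x_0)+x_1-x_0\bigr)}{x_0x_1}.
$$
Analogously, with $v_+^1=(-c_1,1)$ and $c_1=x_0x_1/(Fx_0(x_1-x_e)+x_1-x_0)$,
$$
y_U=\frac{(x_1-x_e)\bigl(Fx_0(x_1-x_e)+x_1-x_0\bigr)}{x_0x_1}.
$$
Imposing $y_S=y_U$ gives an equation that is linear in $F$:
$$
F\bigl[x_1(x_e-x_0)^2-x_0(x_1-x_e)^2\bigr]=(x_1-x_0)(x_0+x_1-2x_e).
$$
Expanding the bracket, it factors as $(x_1-x_0)(x_e^2-x_0x_1)$. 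Solving for $F$ yields exactly the stated $F^h(x_e)$.

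Next I determine the admissible range. The numerator $2(x_e-\tfrac{x_0+x_1}{2})$ is negative for $x_e<\tfrac{x_0+x_1}{2}$. Therefore $F^h>0$ requires $x_0x_1-x_e^2<0$, that is, $x_e>\sqrt{x_0x_1}$. This gives the interval $\sqrt{x_0x_1}<x_e<\tfrac{x_0+x_1}{2}$, which is nonempty by the AM--GM inequality. On this interval $F^h$ is a rational function with nonvanishing denominator, so it is analytic. Clearly $F^h(\tfrac{x_0+x_1}{2})=0$.

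For monotonicity I differentiate $F^h=(x_0+x_1-2x_e)/(x_e^2-x_0x_1)$. The numerator of the derivative is
$$
-2(x_e^2-x_0x_1)-2x_e(x_0+x_1-2x_e)=2x_e^2-2(x_0+x_1)x_e+2x_0x_1=2(x_e-x_0)(x_e-x_1),
$$
which is negative on $(x_0,x_1)$. Hence $F^h$ is strictly decreasing. At the midpoint the derivative equals $-2/(x_e^2-x_0x_1)$ evaluated at $x_e=\tfrac{x_0+x_1}{2}$. Since $\bigl(\tfrac{x_0+x_1}{2}\bigr)^2-x_0x_1=\tfrac{(x_1-x_0)^2}{4}$, this gives $-8/(x_1-x_0)^2$.

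The step I expect to need the most care is justifying that the common point on $\Sigma$ is a genuine crossing point and not a point of the sliding segment. Otherwise the concatenated orbit would not be a heteroclinic trajectory of the Filippov flow. I would check this by verifying that the first component of $Z_1$ and of $Z_2$ have the same sign, both negative, at $(x_e,y_S)$. Using the formula $\dot x = x(y_0(x)-y)$-type reasoning on each linear piece, this reduces to showing that $y_S$ lies above both $x$-nullclines at $x=x_e$, which for $\lambda<0$ places the point above the tangency points $T_1,T_2$. I would also note that the unstable branch of $x_1$ inside $\Sigma_2$ cannot hit $\Sigma$ earlier, because a straight line meets a vertical line only once.
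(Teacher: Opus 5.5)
Your proposal is correct. The heights $y_S,y_U$ check out, as do the factorization $x_1(x_e-x_0)^2-x_0(x_1-x_e)^2=(x_1-x_0)(x_e^2-x_0x_1)$, the derivative $2(x_e-x_0)(x_e-x_1)/(x_e^2-x_0x_1)^2$, and the value $-8/(x_1-x_0)^2$.

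The paper does not write out a proof of this lemma. Its only hint is that the connection ``can be fully analysed, since both vector fields $Z_1$ and $Z_2$ admit explicit first integrals.'' So the intended route is to take the separatrices as level sets of $H_1,H_2$ and match them on $\Sigma$.

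You bypass the first integrals. Because each piece is affine with the prescribed saddle, the separatrices are the eigen-lines, and matching them on $\Sigma$ reduces to a linear equation in $F$. This is shorter and makes the threshold $x_e>\sqrt{x_0x_1}$ transparent through the denominator. It also gives the connection height. After substituting $F^h$, this height is $\frac{(x_e-x_0)(x_1-x_e)\left((x_0+x_1)x_e-2x_0x_1\right)}{x_0x_1(x_e^2-x_0x_1)}$, which agrees with $y_h$ in Appendix~\ref{sec:cyclicityheterocline}. The first-integral route additionally gives information off the separatrices, namely the semi-return maps used for the cyclicity result; that is not needed for this lemma.

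Two small points:
\begin{enumerate}[(i)]
\item Your crossing-versus-sliding check can be made immediate, with no comparison to $T_1,T_2$. On an eigen-line the affine field is $(\dot x,\dot y)=\lambda\,(x-x_\ast,\,y)$, where $(x_\ast,0)$ is the saddle. So at the meeting point $\dot x=\lambda_-^0(x_e-x_0)=-F(x_e-x_0)^2<0$ for $Z_1$ and $\dot x=\lambda_+^1(x_e-x_1)=-F(x_1-x_e)^2<0$ for $Z_2$.
\item To conclude that the curve lives only on the stated interval, add one line for $x_e>\frac{x_0+x_1}{2}$. There, positivity of $F^h$ would force $x_e<\sqrt{x_0x_1}$, which is impossible by AM--GM.
\end{enumerate}
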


\begin{pro}
For the PWL system \eqref{eq:pwlinear}, the heteroclinic bifurcation condition can 
be written either in the form $x_e^{h}(F)$ or in the form $ F^{h}(x_e)$.
\end{pro}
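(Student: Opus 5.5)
The plan is to read everything off the explicit formula in the preceding lemma,
\[
F^{h}(x_e)=\frac{2\left(x_e-\frac{x_0+x_1}{2}\right)}{x_0x_1-x_e^{2}},\qquad \sqrt{x_0x_1}<x_e<\frac{x_0+x_1}{2}.
\]
The form $F^{h}(x_e)$ is already a graph over $x_e$. It remains to show that this map is a bijection onto its image, and to invert it explicitly so as to obtain $x_e^{h}(F)$.

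First, we check that $F^h$ is positive on the open interval. For $x_e>\sqrt{x_0x_1}$ the denominator $x_0x_1-x_e^2$ is negative, and for $x_e<\frac{x_0+x_1}{2}$ the numerator is negative, so $F^h>0$. At the endpoints, $F^h\to 0$ as $x_e\to\frac{x_0+x_1}{2}$, while $F^h\to+\infty$ as $x_e\downarrow\sqrt{x_0x_1}$, because the denominator vanishes and the numerator is negative (the AM–GM inequality gives $\sqrt{x_0x_1}<\frac{x_0+x_1}{2}$, so the interval is nonempty). The preceding lemma gives strict monotonicity, but we would verify it directly. Differentiating yields
\[
\frac{dF^h}{dx_e}\propto (x_0x_1-x_e^2)+2x_e\Bigl(x_e-\frac{x_0+x_1}{2}\Bigr)=x_e^2-(x_0+x_1)x_e+x_0x_1=(x_e-x_0)(x_e-x_1).
\]
This is negative on $(x_0,x_1)$, which contains the interval. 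By continuity and the intermediate value theorem, $F^h$ is therefore a bijection from $\bigl(\sqrt{x_0x_1},\frac{x_0+x_1}{2}\bigr)$ onto $(0,\infty)$.

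For the inverse, we solve $F(x_0x_1-x_e^2)=2x_e-(x_0+x_1)$ for $x_e$. This is the quadratic
\[
Fx_e^2+2x_e-\bigl(x_0+x_1+Fx_0x_1\bigr)=0,
\]
whose roots are
\[
x_e^{h}(F)=\frac{-1\pm\sqrt{1+F(x_0+x_1)+F^2x_0x_1}}{F}.
\]
The discriminant is always positive. The minus root is negative, so the admissible branch is the plus sign, and we would rationalize it as
\[
x_e^h(F)=\frac{x_0+x_1+Fx_0x_1}{1+\sqrt{1+F(x_0+x_1)+F^2x_0x_1}}.
\]
This expression is analytic for $F>0$. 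Two limits serve as sanity checks: as $F\to0$ it tends to $\frac{x_0+x_1}{2}$, and as $F\to\infty$ it tends to $\sqrt{x_0x_1}$, matching the endpoints found above.

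The main obstacle is the sign and branch selection: we must show that the plus root always lies inside $\bigl(\sqrt{x_0x_1},\frac{x_0+x_1}{2}\bigr)$. Bijectivity already forces this, and the rationalized formula makes it transparent. Analyticity of the inverse then follows either from the explicit formula or from the analytic inverse function theorem, since $dF^h/dx_e\neq0$.
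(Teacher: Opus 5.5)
Your argument is correct, but it takes a different route from the paper. The paper does not use the closed form here. It argues that above the $x$-nullclines in $\Sigma_1$ and $\Sigma_2$ the rotation determinants with respect to $x_e$ and to $F$ keep a constant sign. Both are then rotatory parameters, the invariant manifolds move monotonically, and each parameter fixes at most one heteroclinic value of the other, mirroring the smooth-model argument.

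You instead start from the explicit curve of the preceding lemma and show $dF^h/dx_e\propto(x_e-x_0)(x_e-x_1)<0$. This gives a bijection $\bigl(\sqrt{x_0x_1},\tfrac{x_0+x_1}{2}\bigr)\to(0,\infty)$, and inverting the quadratic yields the same $x_e^h(F)$ the paper later uses in the habitat-destruction subsection. Your route proves more than the statement: surjectivity onto all $F>0$, the explicit inverse, analyticity and both endpoint limits.

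It is also more robust on the $F$ side. There $\Theta_F=P\,(x_0-x_e)\,y>0$ in $\Sigma_1$ but $\Theta_F=P\,(x_1-x_e)\,y<0$ in $\Sigma_2$. The heights on $\Sigma$ of $W^s(x_0)$ and $W^u(x_1)$, namely $\frac{(x_e-x_0)(x_1-x_0)}{x_0x_1}+\frac{F(x_e-x_0)^2}{x_0}$ and $\frac{(x_1-x_e)(x_1-x_0)}{x_0x_1}+\frac{F(x_1-x_e)^2}{x_1}$, both increase with $F$. So uniqueness in $F$ comes from comparing their slopes, whose difference is $\frac{(x_1-x_0)(x_e^2-x_0x_1)}{x_0x_1}$. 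That is exactly what your formula encodes, and a sign condition on a rotation determinant does not by itself supply it.

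Your proof does lean on the lemma, which the paper states without proof. To make it self-contained, note that the manifolds of the linear saddles are straight lines. Equating the two heights above, which are affine in $F$, gives $F(x_0x_1-x_e^2)=2x_e-(x_0+x_1)$ in two lines.
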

\begin{proof}
In both $\Sigma_1$ and $\Sigma_2$, the $x$-nullclines lie below the invariant manifolds. Computing the determinant that indicates the direction of rotation shows a consistent sign in both regions with respect to $x_e$ and $F$. Thus, both $x_{e}$ and $F$ are rotatory.
\end{proof}

The next result is proved in Appendix~\ref{sec:cyclicityheterocline}.
\begin{pro}\label{pro:huslc}
The heteroclinic connection $(\Gamma_{PWL})$ unfolds a unique hyperbolic stable limit cycle.
\end{pro}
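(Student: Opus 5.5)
The plan is to construct the return map around the heteroclinic polycycle $\Gamma_{PWL}$ explicitly and reduce the statement to the study of a one-variable displacement function. The polycycle is formed by the segment $[x_0,x_1]$ of the invariant axis $y=0$, traversed from left to right under $Z_1$ and $Z_2$, together with the upper arc. That arc is made of the unstable manifold of $(x_1,0)$ (a straight line in $\Sigma_2$, since $Z_2$ is linear) and the stable manifold of $(x_0,0)$ (a straight line in $\Sigma_1$), and the two lines meet on $\Sigma$ exactly when $F=F^h(x_e)$.

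\textbf{Step 1: transition maps.} I would work in the linearizing eigen-coordinates of each saddle. There each linear flow is diagonal, so the passage time and the transition maps are exact rather than asymptotic:
\begin{itemize}
\item near each saddle the Dulac-type map between transversal sections is a pure power map, $\rho\mapsto C_i\rho^{\sigma_i}$, with the same exponents $\sigma_0,\sigma_1$ as in the smooth model;
\item the regular passages along the axis and across $\Sigma$ are affine, because they are compositions of linear flows with straight sections.
\end{itemize}
Composing these pieces, and choosing the section on $\Sigma$ transversal to the upper arc with coordinate $\rho$, I expect the return map to take the form
\[
\Pi(\rho)=\beta+C\rho^{f_\lambda},\qquad f_\lambda=\frac{(x_e-x_0)x_1}{(x_1-x_e)x_0}.
\]
Here $\beta$ is the signed splitting distance between the two manifolds on $\Sigma$. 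It can be computed from the explicit lines above and is proportional to $F-F^h(x_e)$. The inner (crossing) side of the polycycle is the one relevant for periodic orbits.

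\textbf{Step 2: sign of the exponent.} The heteroclinic curve lies in $\sqrt{x_0x_1}<x_e<\frac{x_0+x_1}{2}$. Since $\sqrt{x_0x_1}>\frac{2x_0x_1}{x_0+x_1}$, on this curve we have $f_\lambda>1$. This puts us in the attracting regime described earlier for the smooth system.

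\textbf{Step 3: existence and uniqueness of the cycle.} Periodic orbits near $\Gamma_{PWL}$ are the zeros $\rho>0$ of
\[
d(\rho)=\Pi(\rho)-\rho=\beta+C\rho^{f_\lambda}-\rho .
\]
Since $f_\lambda>1$, $d$ is strictly convex on $(0,\infty)$, with $d(0)=\beta$ and $d'(0^+)=-1$.
\begin{itemize}
\item For $\beta>0$ and small, which corresponds to the side where the unstable manifold of $x_1$ enters inside the stable manifold of $x_0$, convexity gives exactly one small zero $\rho^\ast\sim\beta$.
\item At that zero $d'(\rho^\ast)\approx -1<0$, so $\Pi'(\rho^\ast)<1$ and the cycle is hyperbolic and stable.
\item For $\beta\le 0$, $d<0$ near $0$, so no cycle bifurcates there.
\end{itemize}

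\textbf{Main obstacle.} The delicate part is Step 1. I have to check carefully that the only nonlinearity of the return map comes from the two saddle passages. I also have to handle correctly the crossing through $\Sigma$, where $Z_1$ and $Z_2$ have different orientations and no sliding occurs along the relevant arc. Finally, the constant $C>0$ and the identification of the sign of $\beta$ with the side of the curve $F^h$ must be made explicit. To do this I would first try using the explicit first integrals $H_1,H_2$ to write the crossing map on $\Sigma$ in closed form. As a fallback, the bounds on the number of cycles can be cross-checked against the global uniqueness in Corollary~\ref{coro:uniqueness} adapted to the PWL setting.
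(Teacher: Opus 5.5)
Your argument is correct and rests on the same mechanism as the paper's proof in Appendix~\ref{sec:cyclicityheterocline}: the hyperbolicity ratio $f_\lambda=\sigma_0\sigma_1>1$ on the PWL heteroclinic curve. Your Step~2 is right, since $x_e>\sqrt{x_0x_1}>\frac{2x_0x_1}{x_0+x_1}$ there. The difference lies in the bookkeeping.

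\emph{The paper's route.} It works at the bifurcation value $F=F_h(x_e)$ and solves the first-integral relations $S_1=0$, $S_2=0$ for two semi-return maps issued from the lower part of $\Sigma$. It then compares their two endpoints near $y_h$ through the two-sided displacement $r_2-r_1$. After a power change of variable this becomes $cR+O(R^{f_\lambda})$ with $c\neq0$, which gives at most one small cycle, and stability is read off from the sign of the derivative.

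\emph{Your route.} You compose the two half-maps into a genuine return map on the upper part of $\Sigma$ and keep the unfolding explicit. This is cleaner and proves more. Writing $y_s,y_u$ for the heights where $W^s(x_0,0)$ and $W^u(x_1,0)$ meet $\Sigma$, the splitting is exactly
\[
y_s-y_u=\frac{(x_1-x_0)(x_e^2-x_0x_1)}{x_0x_1}\bigl(F-F^h(x_e)\bigr),
\]
and the prefactor is positive in the relevant range. So your Step~3 gives existence precisely for $F>F^h(x_e)$ (equivalently $x_e>x_e^h(F)$) and absence otherwise. The appendix leaves this existence statement implicit.

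One assertion in your Step~1 is false as stated, though harmless. Transitions of a linear flow between straight sections are not affine, because the flight time varies. Moreover, $\Sigma$ is not a coordinate line of the eigen-coordinates of either saddle, so the half-maps $\Sigma\to\Sigma$ are not pure powers.

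For instance, for $Z_1$ take eigen-coordinates $u=x-x_0-k_0y$, $v=y$, with $(k_0,1)=v_-^0$. Set $\hat\rho=\rho/y_s$ and $\hat s=s/y_s$, where $s$ is the return height near $y=0$. The first integral $u^{|\lambda_-^0|}v^{\lambda_+^0}$ then gives exactly
\[
\hat s(1-\hat s)^{\sigma_0}=\hat\rho^{\sigma_0}(1-\hat\rho).
\]

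Hence only $\Pi(\rho)=\beta+C\rho^{f_\lambda}(1+o(1))$ holds. The corrections come from an analytic implicit relation and can be differentiated termwise. You should therefore replace global convexity of $d$ by the local estimate $d'(\rho)=-1+O(\rho^{f_\lambda-1})<0$ on a fixed small interval, uniformly for parameters near the connection. With that change, the conclusions of your Step~3 follow unchanged: one small zero $\rho^\ast\sim\beta$ with $0<\Pi'(\rho^\ast)<1$ when $\beta>0$, and none when $\beta\le0$.
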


\subsubsection*{No-Return Region.} The no-return region identifies the subset of the parameter space for which trajectories do not return to $\Sigma$ once they cross it.  This behaviour is the PWL analogue of the focus-to-node transition in the original smooth system~\eqref{eq:4}.  In the nonlinear model, a focus allows the flow to spiral and eventually return to the neighbourhood of the equilibrium, while a node does not generate such 
rotational behaviour.  Thus, the no-return region in the PWL system corresponds precisely to the parameter configurations in the original model where the coexistence equilibrium 
acts as a node: trajectories cross the section once and then move monotonically towards their limit set without producing recurrent or oscillatory motion.

\begin{pro}
The critical curves delimiting the no-return region are:
\begin{align*}
F_{B_1} = \frac{2(x_1 - x_0)(x_e - \frac{x_0 + x_1}{2})}{x_1(x_e - x_0)^2}\qquad \text{and} \qquad F_{B_2} = \frac{2(x_1 - x_0)(x_e - \frac{x_0 + x_1}{2})}{x_0(x_e - x_1)^2}.
\end{align*}
\begin{proof}
The proof follows just checking that $F_{B_1}$ (resp. $F_{B_2}$) represents the connection between the invariant manifold of $x_0$ (resp. $x_1$) and the nullcline of $\dot{x}_2$ (resp. $\dot{x}_1$).
\end{proof}
\end{pro}

\subsubsection{Description of the parameter space}
In this subsection we describe the parameter space $(x_e, F)$ of the PWL equation~\eqref{eq:pwlinear}, showing how the bifurcation curves previously characterised delimit the different dynamical regions. For each region we summarise the corresponding phase-space behaviour:
\begin{enumerate}[(a)]

    \item 
    $\displaystyle 
    \Omega_{1}
    =\left\{
        \tfrac{x_{0}+x_{1}}{2} < x_{e} < x_{1},
        \;\; F \le F_{B_{2}}
      \right\}
    $:
    All trajectories in the basin of attraction converge monotonically to the stable equilibrium, leading to stable coexistence of consumer and vegetation.

    \item 
    $\displaystyle 
    \Omega_{2}
    =\left\{
        \tfrac{x_{0}+x_{1}}{2} < x_{e} < x_{1},
        \;\; F > F_{B_{2}}
      \right\}
    $:
    Trajectories converge to the stable equilibrium through damped oscillations.

    \item 
    $\displaystyle 
    \Omega_{3}
    =\left\{
        (x_{e},F)\in(x_{0},x_{1})\times\mathbb{R}^{+}
        \;\middle|\;
        x_{e}=\tfrac{x_{0}+x_{1}}{2}
      \right\}
    $:
    This curve corresponds to the Hopf bifurcation and is the \emph{only} regime in which no sliding segment is present.  
    Trajectories in the basin converge to the equilibrium through damped oscillations.

    \item 
    $\displaystyle 
    \Omega_{4}
    =\left\{
        x_{0}< x_{e} < \tfrac{x_{0}+x_{1}}{2},
        \;\; x_{e} > x_{e}^{h}(F)
      \right\}
    $:
    All interior trajectories tend to a stable limit cycle, corresponding to sustained predator-prey oscillations.

    \item 
    $\displaystyle 
    \Omega_{5}
    =\left\{
        x_{0}< x_{e} < \tfrac{x_{0}+x_{1}}{2},
        \;\; x_{e} = x_{e}^{h}(F)
      \right\}
    $:
    The stable limit cycle collides with the boundary, producing a heteroclinic loop.  
    This configuration is structurally unstable: perturbations of $(x_{e},F)$ break the loop, yielding either a stable cycle or extinction.

    \item 
    $\displaystyle 
    \Omega_{6}
    =\left\{
        x_{0}< x_{e} < \tfrac{x_{0}+x_{1}}{2},
        \;\; x_{e} < x_{e}^{h}(F),
        \;\; F > F_{B_{1}}
      \right\}
    $:
    The limit cycle has disappeared, and trajectories exhibit oscillatory extinction with long extinction times.

    \item 
    $\displaystyle 
    \Omega_{7}
    =\left\{
        x_{0}< x_{e} < \tfrac{x_{0}+x_{1}}{2},
        \;\; F \le F_{B_{1}}
      \right\}
    $:
    Trajectories are directed monotonically toward the boundary equilibrium, corresponding to certain extinction of the consumer.
\end{enumerate}

In all regimes except $\Omega_{3}$, the switching boundary contains a sliding segment.  
When trajectories enter this region, the discontinuous nature of the vector field produces a sudden change in the consumer population: solutions are rapidly displaced along the boundary before re-entering the interior or approaching the extinction equilibrium.  
This sliding-induced jump is responsible for the abrupt transitions observed in several dynamical regimes.

\begin{figure}[h]
    \centering

    \begin{minipage}{1\textwidth}
        \centering
        \begin{subfigure}[t]{0.215\textwidth}
            \centering
            \begin{overpic}[width=\textwidth]{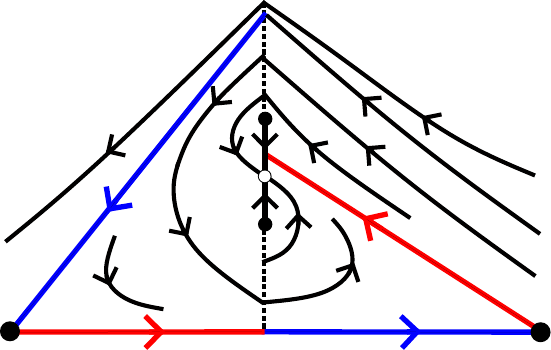}
            \end{overpic}
            \caption{$\Omega_{1}$.}
            \label{fig:om1PWL}
        \end{subfigure}
        \hspace{0.05\textwidth} 
        \begin{subfigure}[t]{0.215\textwidth}
            \centering
            \begin{overpic}[width=\textwidth]{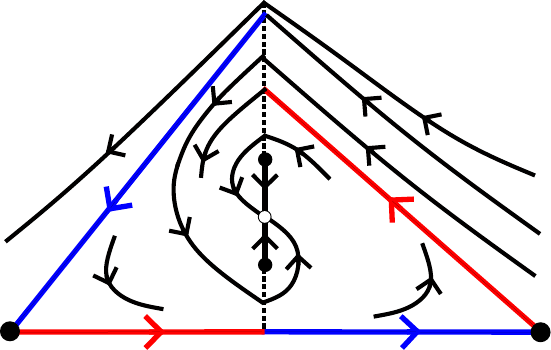}
            \end{overpic}
            \caption{$\Omega_{2}$.}
            \label{fig:om2PWL}
        \end{subfigure}
        \hspace{0.05\textwidth} 
        \begin{subfigure}[t]{0.215\textwidth}
            \centering
            \begin{overpic}[width=\textwidth]{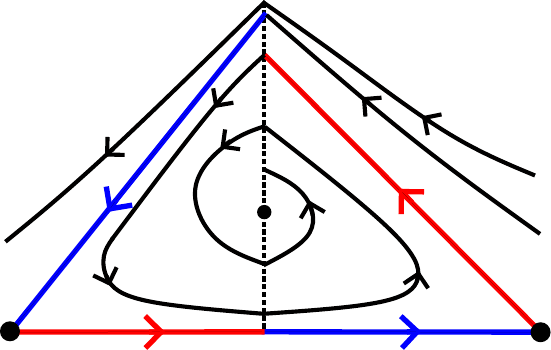}
            \end{overpic}
            \caption{$\Omega_{3}$.}
            \label{fig:om3PWL}
        \end{subfigure}
    \end{minipage}
    \vspace{2em}
    \begin{minipage}{1\textwidth}
        \centering
        \begin{subfigure}[t]{0.215\textwidth}
            \centering
            \begin{overpic}[width=\textwidth]{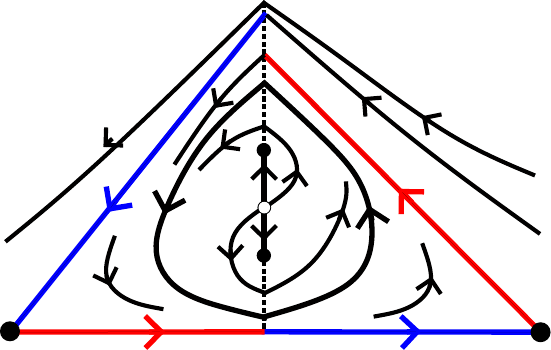}
            \end{overpic}
            \caption{$\Omega_{4}$.}
            \label{fig:om4PWL}
        \end{subfigure}
        \hspace{0.03\textwidth} 
        \begin{subfigure}[t]{0.215\textwidth}
            \centering
            \begin{overpic}[width=\textwidth]{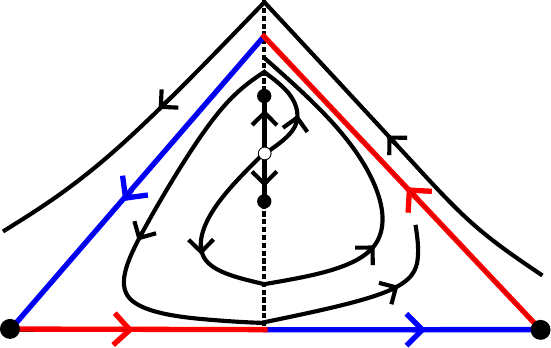}
            \end{overpic}
            \caption{$\Omega_{5}$.}
            \label{fig:om5PWL}
        \end{subfigure}
        \hspace{0.03\textwidth} 
        \begin{subfigure}[t]{0.215\textwidth}
            \centering
            \begin{overpic}[width=\textwidth]{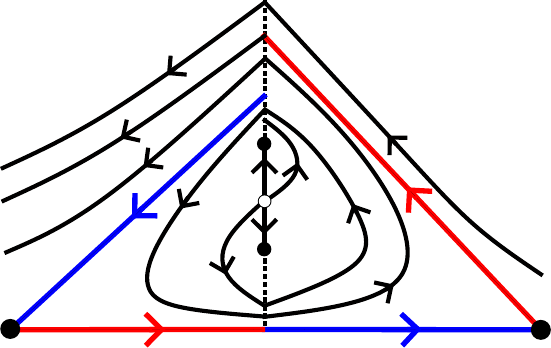}
            \end{overpic}
            \caption{$\Omega_{6}$.}
            \label{fig:om6PWL}
        \end{subfigure}
        \hspace{0.03\textwidth} 
        \begin{subfigure}[t]{0.215\textwidth}
            \centering
            \begin{overpic}[width=\textwidth]{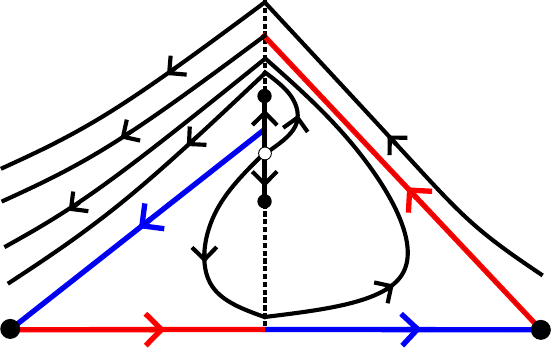}
            \end{overpic}
            \caption{$\Omega_{7}$.}
            \label{fig:om7PWL}
        \end{subfigure}
    \end{minipage}
    \captionsetup{width=\textwidth}
    \caption{Phase portraits of all the regions in the parameter space $(x_{e},F)$, where $F > 0$ and $x_0 < x_e < x_1$. Stable manifolds are coloured blue and unstable manifolds coloured in red.}
    \label{fig:pporiginal1}
\end{figure}

 \begin{figure}[h]
	\centering
	\begin{overpic}[width=0.4\textwidth]{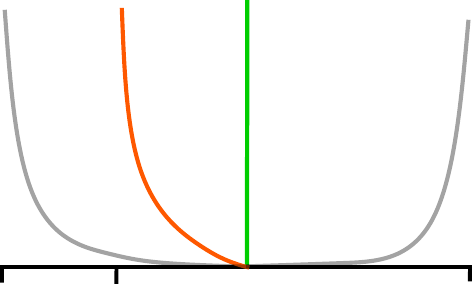}
		\put(-0.5,-4){$x_{0}$}
		\put(15,-5){$\sqrt{x_0x_1}$}
		\put(46,-4){$\frac{x_{0}+x_{1}}{2}$}
		\put(98,-4){$x_{1}$}
        \put(93,7){$\Omega_{1}$}
        \put(78,44){$\Omega_{2}$}
        \put(38,44){$\Omega_{4}$}
        \put(12,24){$\Omega_{6}$}
        \put(4,8){$\Omega_{7}$}
	\end{overpic}
	\vspace{0.3cm}
    	 \captionsetup{width=\textwidth}
	\caption{Parameter space $(x_{e},F)$. The orange curve of the form $x_{e}(F)$ corresponds to the heteroclinic bifurcation curve (region $\Omega_{5}$). The green curve corresponds to the pseudo-Hopf bifurcation (region $\Omega_{3}$). The grey curves separate the no return regions in the parameter space from the ones that have return. They are analogous to the focus-node transition curves separating the node and focus.}
\end{figure}
\subsubsection{Effect of habitat destruction in the heteroclinic bifurcation curve}
In this section, we show analytically that habitat destruction $D$ anticipates
the heteroclinic bifurcation when the consumer efficiency $F$ is fixed.  
To establish this fact, we compare the Hopf bifurcation curve $x_H$
with the heteroclinic bifurcation curve $x_e^{h}(F,D)$ and compute their horizontal
separation.  
We prove that this distance decreases monotonically with $D$, implying that habitat
loss pushes the heteroclinic connection closer to the Hopf threshold and therefore
shrinks the oscillatory region of the system.

\medskip
In the PWL model, equation \eqref{eq:pwlinear}, the heteroclinic bifurcation curve is given explicitly by
$$
F^{h}(x_e;D)
= \frac{2\,(x_e - x_H)}{x_0 x_1 - x_e^{\,2}},
\qquad
x_H = \frac{x_0+x_1}{2}=\frac{1-D}{2}.
$$
Solving $F^{h}(x_e;D)$ for $x_e$ yields
$$
x_e^{h}(F,D)
= \frac{-1 + \sqrt{\,1 + F^{2} x_0 x_1 + 2F\,x_H\,}}{F}.
$$
Since $x_0 x_1=\varepsilon/\alpha$ is independent of $D$, the dependence of
$x_e(F,D)$ on habitat destruction enters only through $x_H$.  
Differentiating with respect to $D$ at fixed $F$ gives
$$
\frac{\partial x_e}{\partial D}
= -\frac{1}{2 \sqrt{\,1 + F^{2} x_0 x_1 + 2F\,x_H\,}}
< 0,
$$
showing that the heteroclinic curve moves leftwards as $D$ increases.

\medskip
The pseudo-Hopf bifurcation occurs at
$x_H=\frac{1-D}{2},$
which also shifts left with increasing habitat loss.  
The horizontal distance between the pseudo-Hopf and the heteroclinic curves is
$$
d_{o}(D)=x_e^{(\mathrm{H})}-x_e(F,D).
$$
Differentiating with respect to $D$ yields
$$
d_{o}'(D)
= -\frac{d x_H}{dD}-\frac{\partial x_e}{\partial D}
= -\frac{1}{2}
+ \frac{1}{2\sqrt{\,1 + F^{2} x_0 x_1 + 2F\,x_H\,}}
<0.
$$

\medskip
Therefore, both bifurcation curves drift towards smaller values of $x_e$ as habitat
destruction increases, but the pseudo-Hopf curve does so more rapidly.  
Consequently, the distance $d_{o}(D)$ decreases strictly with $D$, confirming
explicitly that habitat loss anticipates the heteroclinic bifurcation and reduces
the parameter region in which oscillations can occur.  
This provides a rigorous analytical explanation for the loss of resilience observed
in the PWL model.

\subsubsection{Resilience of the oscillatory regime}
In the PW model, the heteroclinic bifurcation curve is known explicitly. 
This allows for a precise quantification of resilience throughout the entire one-limit-cycle region in parameter space. 
For a given parameter pair $(x_e, F)$ in this region, the distance to the heteroclinic curve can be computed by minimizing
$$
d = \sqrt{(x_e - x_e^*)^2 + (F - F^*)^2},
$$
where $(x_e^*, F^*) \in F_c(x_e)$ lies on the heteroclinic bifurcation curve.

This distance provides a natural measure of resilience: it quantifies how much a parameter pair can be perturbed before crossing the heteroclinic curve and losing the limit cycle. In other words, it identifies the smallest change in parameter space capable of pushing the system into the collapse regime. The same idea could, in principle, be applied to smooth systems, provided the heteroclinic bifurcation curve is explicitly known or sufficiently well approximated.

\subsubsection{Similarities with the original model}
The piecewise linear (PWL) model and the original system share the same qualitative bifurcation structure: both exhibit a Hopf bifurcation and a heteroclinic bifurcation. However, their quantitative properties differ, particularly in the slope of the heteroclinic bifurcation curve at the Hopf point and in their asymptotic behaviour. While the curve is analytic throughout the parameter space in the PWL model, it loses analyticity in the original system precisely at the point where the heteroclinic connection intersects the Hopf curve.

In both models, the hyperbolicity ratio along the heteroclinic orbit matches. This quantity determines whether the connection is contractive or expansive and restricts how limit cycles can unfold. As in the original system, the heteroclinic connection of the PWL model admits only a single stable limit cycle (see Proposition~\ref{pro:huslc}). Furthermore, the pseudo-Hopf bifurcation in the PWL system unfolds a unique crossing limit cycle, just as the classical Hopf bifurcation in the smooth model gives rise to a single limit cycle.

In both models we proved that the heteroclinic bifurcation curve can be expressed as $F^{h}(x_{e})$, which is a monotonic decreasing function. However, their asymptotic limits differ significantly: in the PWL model, the heteroclinic curve approaches the geometric mean $x_e = \sqrt{x_0 x_1}$, whereas in the smooth system it approaches the harmonic mean $x_e = \tfrac{2 x_0 x_1}{x_0 + x_1}$.

\subsection{Impact of extrinsic noise} \label{sec:Stoch}
In this section we numerically explore the impact of extrinsic noise in the overall dynamics of the system. To do so, we consider the stochastic version of system \eqref{eq:4} by introducing additive noise into the resource species:
\begin{equation*}
	\begin{aligned}
		\dot{x} &= -\frac{x^3}{x_{0} x_{1}} + \frac{(x_{0} + x_{1}) x^2}{x_{0} x_{1}} - x y - x + \sigma \eta(t), \\
		\dot{y} &= F x y - F x_{e} y.
	\end{aligned}
\end{equation*}
The system now includes a stochastic perturbation in the equation  $\dot{x}$ modeling environmental variability, making the resource species fluctuate randomly. Here, $\eta(t)$ denotes a standard white Gaussian noise process.

In the discrete-time numerical implementation, $\eta(t)$ is approximated by independent samples drawn from the normal distribution $\mathcal{N}(0, \Delta t)$, where $\Delta t$ is the time step used in the integration scheme. Note that if $\eta \sim \mathcal{N}(0, \Delta t)$, then the noise term $\sigma \eta \sim \mathcal{N}(0, \sigma^2 \Delta t)$, so the parameter $\sigma$ directly controls the \emph{noise intensity} in the system.

This stochastic extension allows us to study the robustness of the bifurcation structure under random perturbations and to investigate noise-induced extinction phenomena.

We fix the parameters \( x_0 = 1 \) and \( x_1 = 3 \) for all simulations, while \( x_e \) and \( F \) vary between simulations. The time step is set to \( \Delta t = 0.01 \), and the simulation time spans the interval \( t \in [0, 300] \).

Initial conditions are chosen as \( (1.5, 0.3) \), which lie near the basin of attraction of the limit cycle emerging from \( (x_e, y_e) \). This ensures that forward time simulations either tend to the origin or approach the limit cycle (when it exists), given enough time. This setup is appropriate for studying the global bifurcation behaviour under noise, since the limit cycle ultimately collides with the heteroclinic orbit during bifurcation.

In simulations of survival probability and extinction times, each pixel in the resulting plots aggregates data from \( n = 90 \) independent realizations. To improve the accuracy of
results, one can increase either the number of simulations per pixel or the total simulation
time, since in some cases, the system may be mistakenly classified as non-extinct, while
it would in fact go extinct as $t$ goes to $\infty$.

Ideally, a more exhaustive study would explore variations in all parameters $x_0, x_1, x_e,$ and $F$. However, due to the high dimensionality of the parameter space, data visualization becomes challenging. Therefore, the numerical simulations presented here serve as illustrative examples that show what types of behaviour can occur under certain conditions, but they do not rule out the possibility of other behaviours under different parameter values or initial conditions.

\subsubsection{Anticipation of heteroclinic bifurcation under the influence of noise}
The extinction situation occurs in the case where the limit cycle has already collapsed with the heteroclinic orbit or when the orbit scapes the outer basin of attraction of the limit cycle. So basically a probability of extinction different from zero what is telling us is that there is some scenario in which noise can anticipate the extinction. For the deterministic case, in which $\sigma=0$, the heteroclinic curve occurs at one point and the survival probability transitions abruptly from 1 to 0. When considering $\sigma>0$ the averages of survival probability transition smoothly at first glance, meaning that the heteroclinic bifurcation occurs with certain probability. When noise is present, this transition becomes gradual: the averaged survival probability decays smoothly, suggesting that the heteroclinic bifurcation is anticipated with some probability, depending on the noise intensity. Consequently, the optimal threshold $x_c$ introduced in Section~\ref{subsec:habitatdestruction} would also be anticipated by noise, just as the heteroclinic bifurcation is. Although $x_c$ depends on the level of habitat destruction $D$ and the analysis could, in principle, be carried out directly in terms of $D$, working with $x_e$ is more convenient. Expressing the system in terms of $D$ would require a nonlinear rescaling that distorts the geometry of the parameter space, whereas the $(x_e, F)$ representation preserves a simpler structure without altering the underlying mechanisms or qualitative conclusions.
    
\begin{figure}[h]
    \centering
    \begin{subfigure}[t]{0.48\textwidth}
        \centering
        \includegraphics[width=\textwidth]{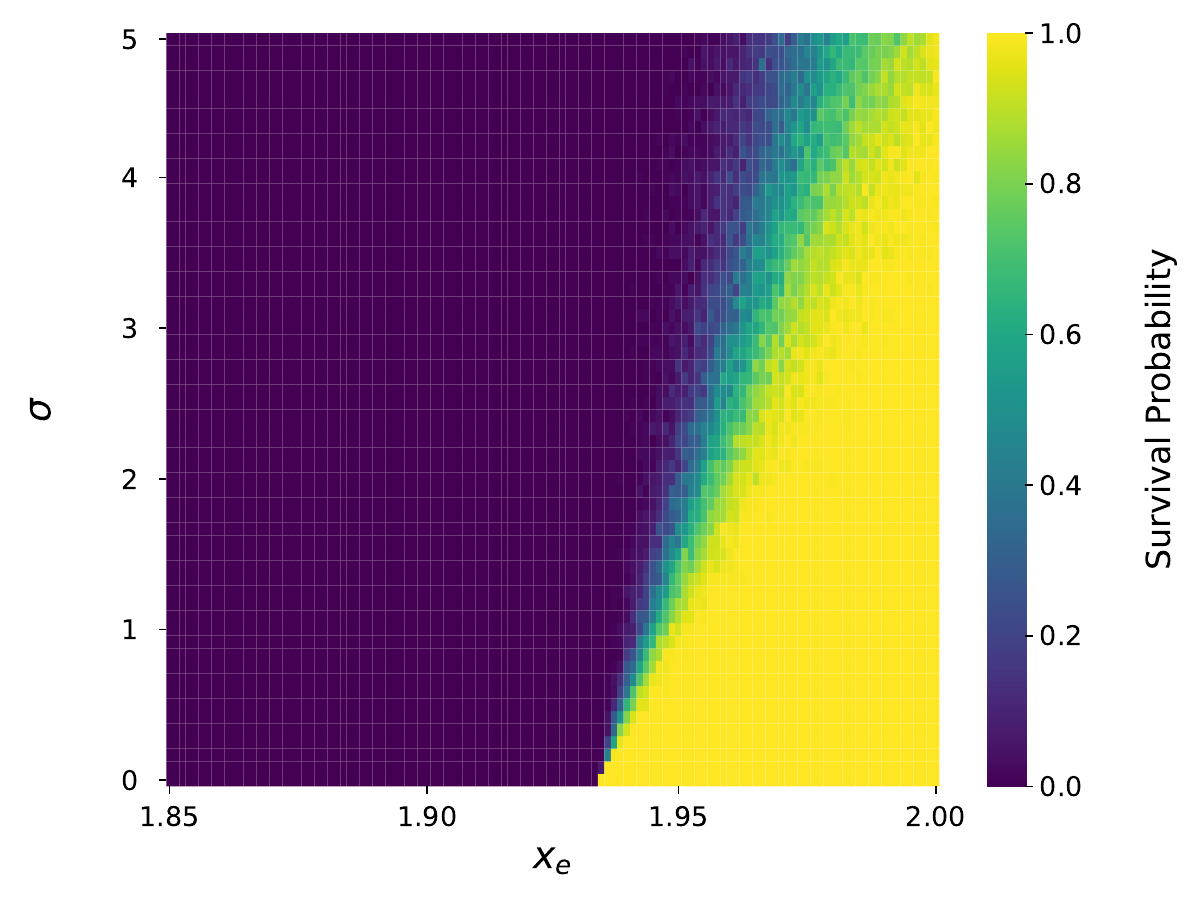}
        \caption{$F=0.5$}
        \label{fig:sbif05}
    \end{subfigure}
    \hfill
    \begin{subfigure}[t]{0.48\textwidth}
        \centering
        \includegraphics[width=\textwidth]{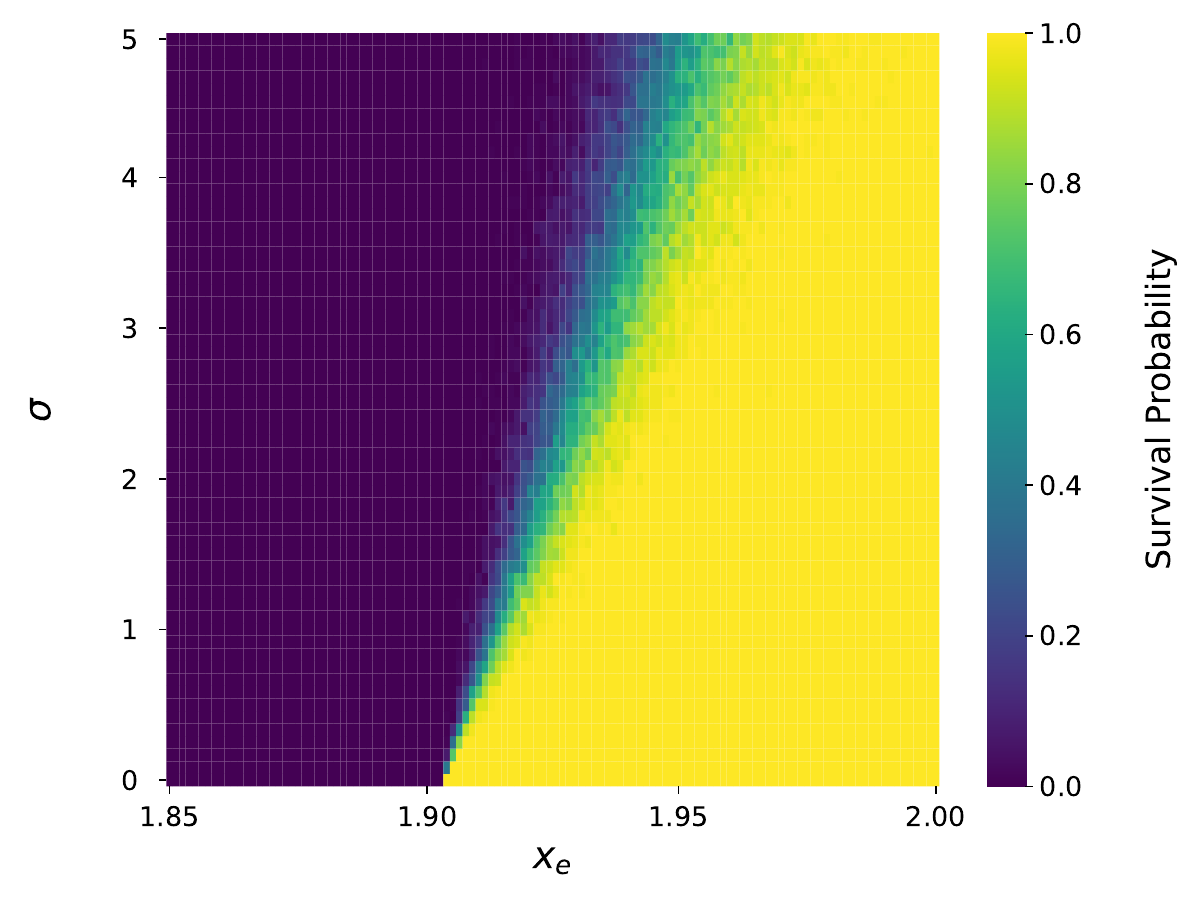}
        \caption{$F=1$}
        \label{fig:sbif1}
    \end{subfigure}
    \vspace{0.5cm}
    \begin{subfigure}[t]{0.48\textwidth}
        \centering
        \includegraphics[width=\textwidth]{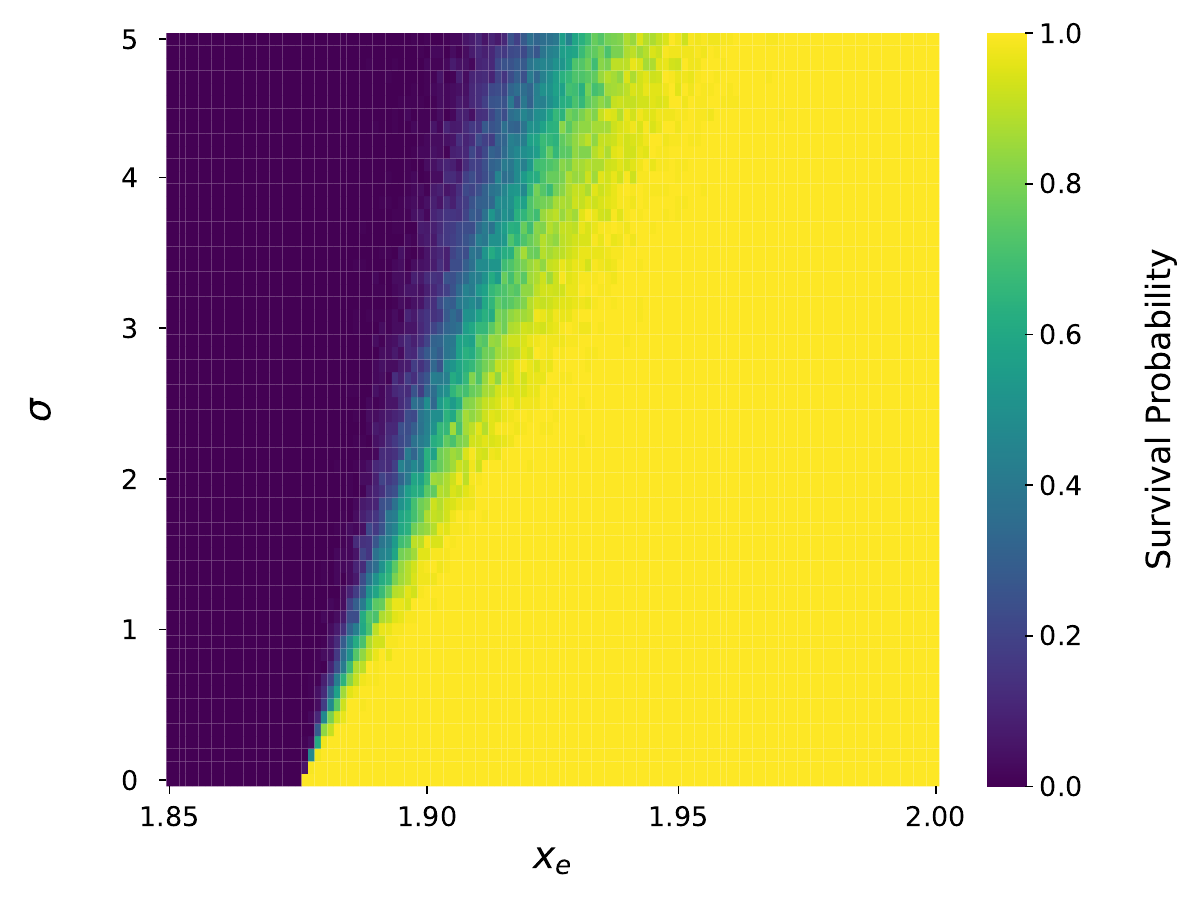}
        \caption{$F=2$}
        \label{fig:sbif2}
    \end{subfigure}
    \hfill
    \begin{subfigure}[t]{0.48\textwidth}
        \centering
        \includegraphics[width=\textwidth]{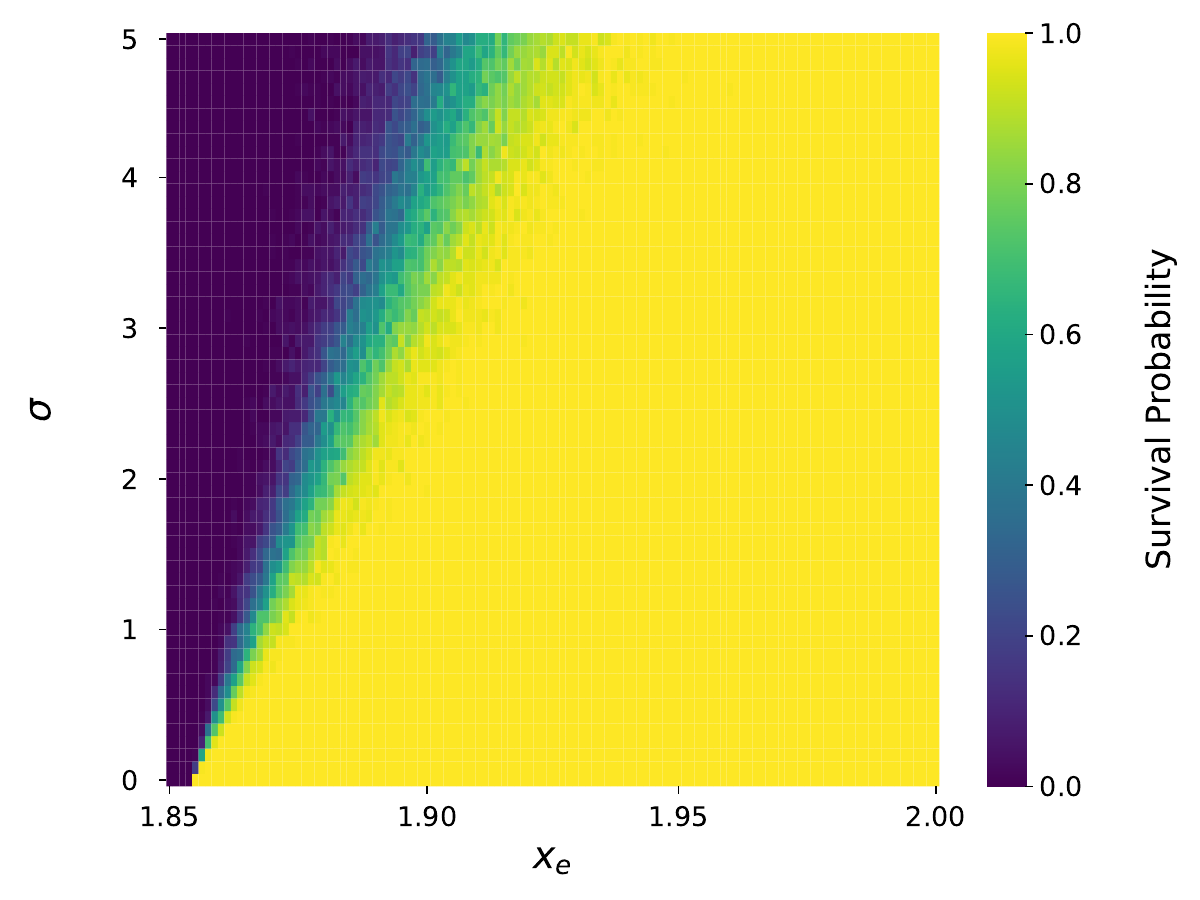}
        \caption{$F=5$}
        \label{fig:sbif5}
    \end{subfigure}
   	 \captionsetup{width=\textwidth}
    \caption{Survival probability fixing a certain parameter value $F$ over $x_e$ when increasing the noise intensity $\sigma$. The transition of survival probability from one to zero for $\sigma=0$ corresponds to the heteroclinic bifurcation. So for $\sigma>0$, there is that the bifurcation anticipates. The stability region between the Hopf and the heteroclinic bifurcation corresponds to the vertical length of $x_{e}$, given an $F$ and a certain noise intensity ($\sigma$). } 
    \label{fig:survprob}
\end{figure}

 In Figure~\ref{fig:survprob}, it is seen that the grater the noise intensity the more it anticipates the global bifurcation. The yellow region stands for the region in the parameter space $(\sigma,x_{e})$ in which the populations persists during the simulation time interval. Take into account that the stability region does not increase indefinitely since there is a bound at which there is no heteroclinic orbit. But maybe would be interesting to compute the slope of the yellow region to see if when increasing $F$ it increases the stability region.

\subsubsection{Regimes under the influence of noise and extinction times}

We focus on the region of parameter space where $x_e < (x_0 + x_1)/2$, which corresponds to the zone where oscillatory behaviour and collapse can occur.

It is particularly interesting to analyse how extinction times change when we increase the noise intensity, keeping the parameters $x_0$, $x_1$, and $F$ fixed, and decreasing the value of $x_e$, which can be interpreted as a measure of how degraded or vulnerable the community is. Qualitatively, the system exhibits two dynamical regimes: persistence or extinction. Notably, the presence of noise can induce complex oscillations that either delay extinction or anticipate the collapse.

\begin{figure}[h]
    \centering
    \begin{subfigure}[t]{0.45\textwidth}
        \centering
        \includegraphics[width=\textwidth]{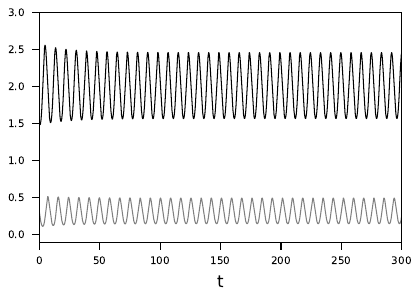}
        \caption{$x_e = 1.98$, $\sigma = 0$}
        \label{fig:ts1}
    \end{subfigure}
    \hfill
    \begin{subfigure}[t]{0.45\textwidth}
        \centering
        \includegraphics[width=\textwidth]{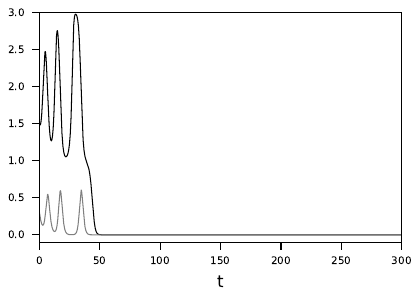}
        \caption{$x_e = 1.90$, $\sigma = 0$}
        \label{fig:ts2}
    \end{subfigure}
    \vspace{0.5cm}
    \begin{subfigure}[t]{0.45\textwidth}
        \centering
        \includegraphics[width=\textwidth]{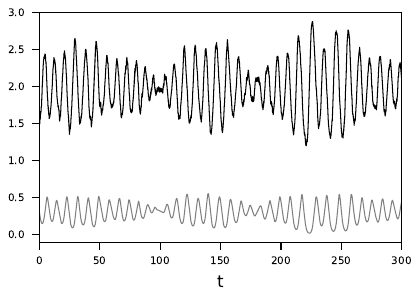}
        \caption{$x_e = 1.98$, $\sigma = 5$}
        \label{fig:ts3}
    \end{subfigure}
    \hfill
    \begin{subfigure}[t]{0.45\textwidth}
        \centering
        \includegraphics[width=\textwidth]{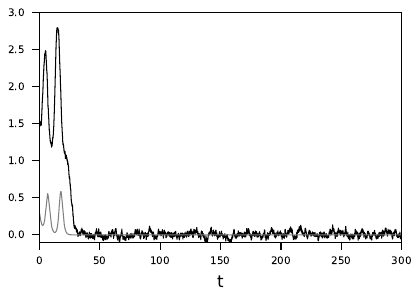}
        \caption{$x_e = 1.90$, $\sigma = 5$}
        \label{fig:ts4}
    \end{subfigure}
    \captionsetup{width=\textwidth}
    \caption{Time series showing the effect of noise on extinction and persistence. Fixed $F=1$; we vary both $x_e$ and $\sigma$. The choice of $x_e$ distinguishes between values near the Hopf point and values near the heteroclinic bifurcation. Black denotes $x(t)$, and grey denotes $y(t)$.}
    \label{fig:timeextinction}
\end{figure}
Figure~\ref{fig:ts1} displays periodic behaviour typical of deterministic oscillations in the limit cycle regime, while Figure~\ref{fig:ts3} shows aperiodic dynamics induced by noise. This contrast highlights how noise disrupts the periodic structure of the deterministic system.

Figure~\ref{fig:ts4} illustrates a realization where the extinction time is strongly influenced by stochasticity. Depending on the trajectory, extinction may occur sooner or later, demonstrating that noise can either push the system outside the basin of attraction or temporarily sustain it within. This situation corresponds to the intermediate survival regime $0 < P_s < 1$ observed in Figure~\ref{fig:survprob}. Additionally, there are instances in which noise induces chaotic-like oscillations that delay collapse, effectively increasing the extinction time.

\begin{figure}[H]
	\centering
	\includegraphics[width=0.9\textwidth]{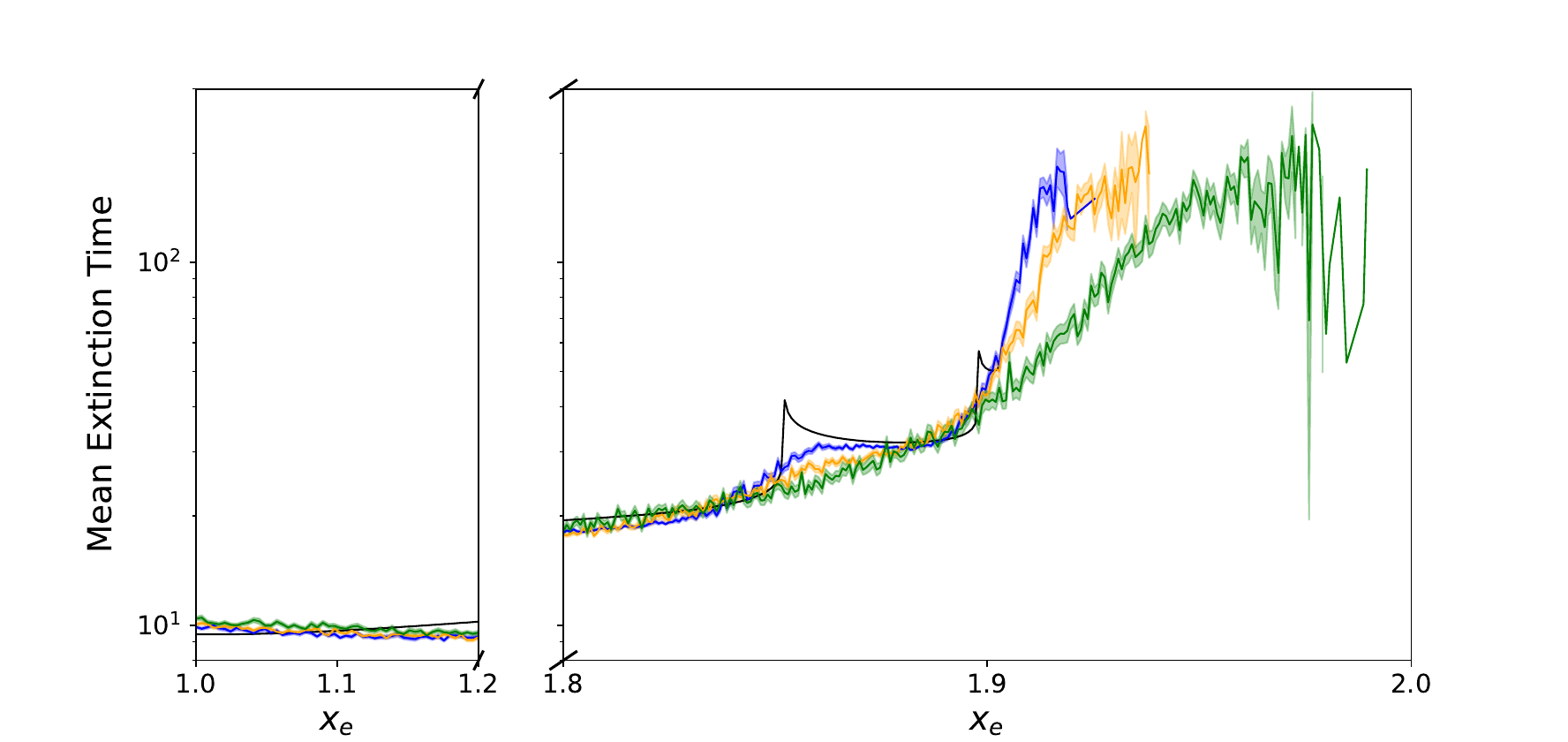}
	\captionsetup{width=\textwidth}
    \caption{Extinction times for fixed parameters $x_0 = 1$, $x_1 = 3$, and $F = 1$, with $x_e \in [x_1, (x_0 + x_1)/2]$ shown in linear-log scale. Each curve corresponds to a different noise intensity: green for $\sigma = 5$, yellow for $\sigma = 2$, blue for $\sigma = 1$, and black for $\sigma = 0$. Solid lines represent the mean extinction time, and the shaded areas represent the standard deviation. Each time series ends at the heteroclinic bifurcation point, beyond which extinction no longer occurs.}
\end{figure}

For values of $x_e$ near the Hopf bifurcation and under high noise intensity, the limit cycle is both small and surrounded by a narrow basin of attraction. In such cases, stochastic fluctuations can rapidly push trajectories out of the basin, leading to quick extinction.

As shown in the extinction time curves, increasing noise intensity generally reduces the average extinction time. However, in certain regions of parameter space, noise can also delay extinction by inducing chaotic-like oscillations that momentarily trap the system within the basin of attraction. This effect is reflected in the non-monotonic trends and wider variability in extinction times for intermediate values of $x_e$.

\section{Conclusions}

This study addresses a central ecological problem: how habitat loss affects the persistence of consumer-resource communities when facilitation among resource individuals is present. Such positive interactions--characteristic of ecosystems like drylands, where vegetation facilitation shapes spatial structure and functional resilience~\cite{Berdugo2017}--introduce nonlinear responses and can trigger sudden regime shifts, making ecological collapses difficult to predict.

A recent work has examined the effects of facilitation and habitat destruction, but the analysis was mostly restricted to local bifurcations, with global behaviour inferred primarily from numerical simulations~\cite{JosVidBlaiErnest2021}. Although those studies revealed long transients, relaxation oscillations, and an eventual collapse caused by a global bifurcation, the heteroclinic mechanism driving this collapse was not analytically proven. In this work, we provide a comprehensive analysis of this system focusing on the role of habitat destruction, showing how it alters the global dynamics, reduces the coexistence region, and pushes the system toward the heteroclinic bifurcation that ultimately anticipates the collapse threshold.

To address this issue, we first provide an analytical description of the global phenomena highlighted in \cite{JosVidBlaiErnest2021}, using tools such as rotated vector fields, Poincar\'e compactification, and slow-fast theory. A key novelty of this study is the introduction of a piecewise-linear (PWL) approximation, which preserves the qualitative behaviour of the original model while offering complete analytical control over the global bifurcation. In this PWL framework, the heteroclinic bifurcation curve can be computed explicitly, providing a powerful tool for conservation applications by quantifying how far a given system lies from a collapse-inducing change in parameters.

The model can thus be regarded as a simple yet insightful example for studying global phenomena such as heteroclinic connections in ecological systems. Analytical studies of such global bifurcations are relatively rare, since most ecological models rely on numerical exploration. An exception is provided in \cite{heteroclinicConnectionPredatorPrey}, where the authors, unable to track invariant manifolds directly, transform the system into a near-integrable form to describe the corresponding heteroclinic connection.

Finally, we also investigate how extrinsic noise affects these global dynamics. Our results show that stochastic fluctuations can make the heteroclinic bifurcation occur earlier than predicted by deterministic theory, effectively shrinking the region of persistent oscillations and increasing the probability of collapse. This demonstrates that environmental variability can further reduce ecosystem resilience, emphasizing the importance of considering stochastic effects when evaluating collapse thresholds.

A promising direction for future research is to build systematic bridges between classical smooth bifurcations and their piecewise-linear (PWL) counterparts. Developing such correspondences would allow the construction of PWL models that retain the essential qualitative behaviour of smooth systems while being far more analytically tractable. This is particularly relevant in conservation ecology, where predicting and managing collapse requires robust analytical tools: smooth models often demand solving Hamilton-Jacobi equations numerically or rely heavily on computational exploration, whereas PWL systems provide explicit formulas and complete analytical control. Establishing these links could therefore enhance our ability to design effective interventions and anticipate critical transitions with greater precision.

\section{Acknowledgements} This research is funded by the 2020-21 Biodiversa+/Water JPI joint call under the BiodivRestore ERANET Cofund (GA N.101003777) MPA4Sustainability project with funding organizations: Innovation Fund Denmark (IFD), Agence Nationale de la Recherche (ANR), Fundaçao para a Ciencia e a Tecnologia (FCT), Swedish Environmental Protection Agency (SEPA) and grant PCI2022-132926 (MCIN/AEI/10.13039/501100011033) and
by the European Union Next Generation EU/PRTR. This work is also
supported by the Spanish State Research Agency, through the Severo
Ochoa and María de Maeztu Program for Centers and Units of Excellence in R\&D (CEX2020-001084-M). We thank CERCA Programme/ Generalitat de Catalunya for institutional support. This research is also supported by the Spanish Government (Project PID2022-136613NB-I00). Helpful suggestions have been made by A. Gasull, A. Zegeling, and A. Teruel. 
\bibliographystyle{abbrv}
\bibliography{biblio.bib}
\appendix
\addtocontents{toc}{\protect\setcounter{tocdepth}{1}}
\section{Poincar\'e compactification}\label{sec:PC}
The Poincar\'e compactification is a common tool for studying the limit sets of planar polynomial differential systems. To carry out this compactification, the stereographic projection is used to embed the planar differential system onto the unit sphere. Under this map, the northern hemisphere becomes a compact domain whose equator represents points at infinity. Unfortunately, the vector field does not stay bounded as it approaches $\mathbb{S}^{1}$ and must be multiplied by a regularization function to smoothly extend the vector to infinity. 

By embedding the phase portrait into a compact space, we can apply Poincar\'e--Bendix\-son Theorem to characterize the limit sets of all orbits. For more details on the construction of the compactification and the dynamics near infinity, we recommend the works \cite{Andronov1966, DumLliArt2006}.

The vector fields in each of the three local charts are given below:

\begin{figure}[h]
  \centering
  \begin{minipage}[t]{0.48\textwidth}\vspace{0pt}
    \begin{itemize}
        \item  $(U_1,\phi_1)$:
        \begin{equation*}
          \begin{cases}
            \dot u = v^d\bigl[-u\,P(\tfrac1v,\tfrac uv)+Q(\tfrac1v,\tfrac uv)\bigr],\\[1ex]
            \dot v = -v^{d+1}P(\tfrac1v,\tfrac uv),
          \end{cases}
        \end{equation*}
        
        \vspace{1cm}
        
        \item  $(U_2,\phi_2)$:
    \begin{equation*}
          \begin{cases}
        \dot u = v^d\bigl[P(\tfrac uv,\tfrac1v)-u\,Q(\tfrac uv,\tfrac1v)\bigr],\\[1ex]
        \dot v = -v^{d+1}Q(\tfrac uv,\tfrac1v),
      \end{cases}
        \end{equation*}
               
        \vspace{1cm}
        
        \item  $(U_3,\phi_3)$:
        \begin{equation*}
        \begin{cases}
        \dot u = P(u,v),\\[1ex]
        \dot v = Q(u,v).
      \end{cases}
        \end{equation*}
    \end{itemize}
  \end{minipage}\hfill
  \begin{minipage}[t]{0.48\textwidth}\vspace{0pt}
    \centering
    \begin{overpic}[width=\textwidth]{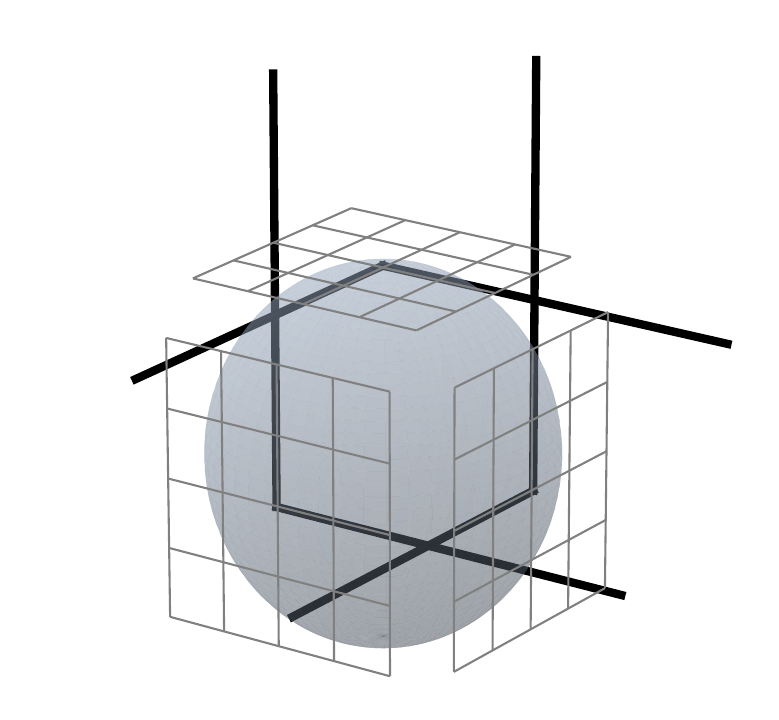}
      \put(16,14){$U_{1}$}
      \put(79.5,27){$U_{2}$}
      \put(74,57.8){$U_{3}$}
      \put(37,10){$u$}
      \put(78,12.5){$u$}
      \put(93,44.5){$v$}
      \put(64,80){$v$}
      \put(30,77){$v$}
      \put(18,40){$u$}
    \end{overpic}
    \captionof{figure}{Local charts $(U_k,\phi_k)$, $k=1,2,3$, of la esfera de Poincar\'e.}
    \label{fig:charts_compact}
  \end{minipage}
\end{figure}
Here, $d = \max\{\deg(P), \deg(Q)\}$. Each chart corresponds to a local parametrization of the vector field in the sphere, allowing us to describe the dynamics near the equator (i.e., at infinity in the plane). Local charts $(U_1,\phi_1)$ and $(U_2,\phi_2)$ study the points at infinity.

\begin{definition}[Finite singular points]
Singular points lying in \(\mathbb{S}^{2}\setminus\mathbb{S}^{1}\).
\end{definition}

\begin{definition}[Infinite singular points]
Singular points lying on \(\mathbb{S}^{1}\).
\end{definition}

When analysing singular points at infinity, we follow the same steps as for finite singularities, applying the theory of elementary and non-elementary singularities via linearization and blow-ups respectively. The Poincar\'e compactification transforms the original vector field into one that is homeomorphic to it, but not topologically conjugate. As a result, we can determine the qualitative nature of each singularity on the sphere at infinity, but we must use other methods to study quantitative features at infinity, such as eigenvalues or periods of limit cycles at infinity.

\section{Rotated vector fields}\label{sec:rvf}
The theory of rotated vector fields was first introduced by Duff in \cite{Duff1953LimitCyclesAR} for a particular family of vector fields called complete, and was later weakened into a wider family--the semicomplete vector fields--introduced in \cite{PERKO197563}. This theory provides a framework to track the movement of global objects such as limit cycles and invariant manifolds.

Consider a vector field
\begin{equation}\label{eq:rotated_vector_field}
\begin{aligned}
\dot{x} &= P(x,y,\nu), \\
\dot{y} &= Q(x,y,\nu),
\end{aligned}
\end{equation}
with a parameter $\nu \in (-\infty,\infty)$ and where $P$ and $Q$ are analytic functions. The family of vector fields \eqref{eq:rotated_vector_field} is called semicomplete if
\vspace{0.3cm}
\begin{itemize}
\item[(i)] their rest points remain fixed for all $\nu \in (-\infty,\infty)$,
\vspace{0.3cm}
\item[(ii)] $\dfrac{\partial \theta}{\partial \nu} = \dfrac{\partial}{\partial\nu} \arctan \dfrac{P}{Q} = \dfrac{P(\partial Q/\partial \nu) - Q(\partial P/\partial \nu)}{P^2 + Q^2} > 0$ for all $\nu \in (-\infty,\infty)$,

\vspace{0.3cm}
\item[(iii)] $\tan\theta \rightarrow \pm\infty$ as $\nu \rightarrow \pm\infty$, where $\theta = \tan^{-1}(Q/P)$ is the angle of the field vector $(P,Q)$.
\end{itemize}
\vspace{0.3cm}

In practice, these conditions are quite restrictive and are typically satisfied only within a certain region of the phase space. This family of vector fields ensures the monotonic movement of global objects such as limit cycles and invariant manifolds with respect to the parameter $\nu$, which will be referred to as the rotatory parameter. 

Condition (ii) is equivalent to studying the sign preservation of the determinant
\begin{equation*}
\Theta_{\nu}(x,y,\nu) =
det\begin{pmatrix}
P(x,y,\nu) & Q(x,y,\nu) \\[5pt]
\dfrac{\partial P(x,y,\nu)}{\partial \nu} & \dfrac{\partial Q(x,y,\nu)}{\partial \nu}
\end{pmatrix}.
\end{equation*}

Depending on the sign, the vector fields will rotate in one direction or the other: if positive, the rotation is counter-clockwise; if negative, it is clockwise. For families \eqref{eq:rotated_vector_field} satisfying the above conditions, theorems ensure the non-intersection of limit cycles and the monotonic growth of these with respect to the parameter. Further results are summarized in \cite{DumLliArt2006}; for more details, one may consult the original papers.
\section{Optimal bound via the singular heteroclinic connection}\label{sec:optimalbound}
We consider the two-dimensional vector field
$$
\dot x = P(x,y), 
\qquad 
\dot y = F\,y(x-x_{e}),
$$
with $F\gg 1$, and assume that 
$$
P(x,y)=P(x,0)-xy,
\qquad 
P(x,0)= -\,x\,\frac{(x-x_{1})(x-x_{0})}{x_{0}x_{1}}.
$$
For $x\in(x_{0},x_{1})$ one has $P(x,0)>0$, so the slow drift in $x$ is positive along the invariant axis $y=0$.

\medskip

When $F$ goes to $\infty$ the dynamics decomposes into fast vertical motions and a slow horizontal drift at large height.  
This motivates the singular heteroclinic approximation, in which we follow the flow along the contour
$$
\Gamma_{1}: (x_{1},y),\ y\in(\varrho,R), 
\qquad
\Gamma_{2}: (x,y),\ y=R,\ x\in[x_{1},x_{0}],
\qquad
\Gamma_{3}: (x_{0},y),\ y\in(R,\varrho),
$$
with $0<\varrho\ll1\ll R$.
Along this contour we compute the signed variation of $x$ using
$$
\frac{dy}{dx}
= \frac{Q(x,y)}{P(x,y)}
= \frac{F y (x-x_{e})}{P(x,y)}.
$$

To determine the singular heteroclinic connection, we evaluate the contributions along each segment.
Along the vertical line $x=x_{1}$ ($\Gamma_{1}$) we have $dx=0$, hence
$$
0 = \int_{x_{1}}^{x_{1}} dx
  = \int_{\varrho}^{R}\frac{P(x_{1},y)}{Q(x_{1},y)}\,dy .
$$
Since $P(x_{1},y)=-x_{1}y$ and $Q(x_{1},y)=F y (x_{1}-x_{e})$, we obtain
$$
\delta_{1}
= \int_{\Gamma_{1}}\frac{P}{Q}\,dy
= -\,\frac{x_{1}}{F(x_{1}-x_{e})}\,(R-\varrho).
$$

A symmetric computation along the vertical line $x=x_{0}$ ($\Gamma_{3}$) gives
$$
\delta_{3}
= \int_{\Gamma_{3}}\frac{P}{Q}\,dy
= \frac{x_{0}}{F(x_{e}-x_{0})}\,(R-\varrho).
$$

Along $\Gamma_{2}$, where $y=R$, one has
$$
\dot x = P(x,R)
       = P(x,0) - xR
       = -\,xR + \mathcal{O}(1),
$$
when $R$ goes to infinity, and therefore
$$
\delta_{2}
= \int_{\Gamma_{2}}\frac{P(x,R)}{Q(x,R)}\,dx
= O\!\left(\frac{1}{F}\right),
$$
a negligible contribution compared to $\delta_{1},\delta_{3}\sim R/F$.

\medskip

The singular heteroclinic closes when the leading vertical contributions cancel,
$$
\delta_{1}+\delta_{3}=0 .
$$
Substituting the expressions above gives
$$
\frac{x_{1}}{x_{1}-x_{e}}
=
\frac{x_{0}}{x_{e}-x_{0}}
\qquad\Longleftrightarrow\qquad
x_{e}
= \frac{2 x_{0}x_{1}}{x_{0}+x_{1}} .
$$

\begin{pro}[Optimal singular value]
In the fast limit $F$ goes $\infty$, the singular heteroclinic connecting
$(x_{1},0)$ to $(x_{0},0)$ closes if and only if
$$
x_{e}=x_{e}^{\ast}
:=\frac{2 x_{0}x_{1}}{x_{0}+x_{1}} .
$$
\end{pro}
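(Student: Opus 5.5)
The plan is to make the contour computation above rigorous as a leading-order balance, and then read off the ``if and only if'' from a monotonicity argument in $x_e$. Fix $0<\varrho\ll 1\ll R$. In the limit $F\to\infty$, I will approximate the two branches that form the candidate loop as follows: the relevant branch of $W^u(x_1,0)$ by the vertical segment $\Gamma_1$, and the relevant branch of $W^s(x_0,0)$ by $\Gamma_3$. This is justified by the eigenvectors $v_+^1$ and $v_-^0$ of the earlier Proposition, whose horizontal components are $\mathcal{O}(1/F)$, so both branches leave and enter the axis almost vertically. The two branches are joined through the upper transit $\Gamma_2$ at height $y=R$. The signed horizontal drift along each piece is measured by the integral of $P/Q$ with respect to $y$, exactly as in the contour computation. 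Closing the singular loop means the total horizontal displacement produced along the vertical legs must be compensated. At leading order this is the condition $\delta_1+\delta_3=0$, once $\delta_2$ has been shown to be negligible.

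The key steps, in order, are these. First, on $\Gamma_1$ and $\Gamma_3$, use $P(x_i,y)=P(x_i,0)-x_iy=-x_iy$ (since $P(x_i,0)=0$) and $Q(x_i,y)=Fy(x_i-x_e)$. This gives the ratio $P/Q=-x_i/(F(x_i-x_e))$, which is constant in $y$. Hence $\delta_1$ and $\delta_3$ are exactly the two expressions displayed above, both of size $(R-\varrho)/F$. Second, bound $\delta_2$: along $y=R$ one has $|P|\ge xR-\mathcal{O}(1)\ge cR$ and $|Q|\le CFR$, so $\delta_2=\mathcal{O}(1/F)$ uniformly in $R$. It is therefore $o(R/F)$ once $R\to\infty$. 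Third, divide the balance $\delta_1+\delta_3=0$ by $(R-\varrho)/F$ and let $\varrho\to 0$, $R\to\infty$. This yields
\[
\frac{x_1}{x_1-x_e}=\frac{x_0}{x_e-x_0}.
\]
Fourth, for the equivalence, observe that the map $x_e\mapsto \frac{x_0}{x_e-x_0}-\frac{x_1}{x_1-x_e}$ is strictly decreasing on $(x_0,x_1)$ and tends to $\pm\infty$ at the endpoints. So it has the unique zero $x_e^\ast=2x_0x_1/(x_0+x_1)$, and its sign tells on which side of $(x_0,0)$ the returning branch lands. As a consistency check, this balance is precisely the condition $f_\lambda=1$ on the hyperbolicity ratio, which matches Corollary~\ref{cor:unstableconnection}.

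The main obstacle is the first approximation step: showing that the true separatrices stay within $o(R/F)$ of the vertical lines $x=x_0$ and $x=x_1$ throughout the legs of height $R$. Near the saddles this follows from the eigenvector directions. Away from them, I would compare $dx/dy=P/(Fy(x-x_e))$ with its frozen value at $x=x_i$ using a Gronwall estimate, since the right-hand side is $\mathcal{O}(1/F)$ while $|x-x_e|$ stays bounded below. If that comparison proves too weak, I would instead rescale $Y=y/F$ and $\tau=Ft$ and pass to the limiting system. That route would also settle whether the drift produced along $\Gamma_2$ really is lower order than the vertical contributions or has to be incorporated, and it is the point I would check most carefully before trusting the final formula.
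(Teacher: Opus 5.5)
Your plan reproduces the appendix's contour computation with its weak points made explicit. The step you flag as the main obstacle is not a technicality a Gronwall estimate will settle: it fails, and the closure condition resting on it is the wrong one.

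\textbf{The steps that fail.} The eigenvectors only give $x-x_i=\mathcal{O}(y/F)$ along the separatrices, so these stay near $x=x_0$, $x=x_1$ only while $y\ll F$. The passage from $x_1$ to $x_0$, however, happens at heights $y=\Theta(F)$. There $\delta_1$ and $\delta_3$ are of order one and the orbit is nowhere near the rectangle. No choice of $R$ makes the vertical approximation and the negligibility of $\Gamma_2$ hold at the same time.

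Second, $\delta_1+\delta_3=0$ is not what closing means. Along a connection from $(x_1,0)$ to $(x_0,0)$ the total change of $x$ is $x_0-x_1\neq0$. For a loop whose vertical legs span the same heights, the balance that actually encodes closing is that the net change of $y$ during the transit vanish, i.e.\ $F\int_{x_0}^{x_1}\frac{x-x_e}{x}\,dx=0$. A balance of horizontal drifts does not do this.

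Third, your bound on $\delta_2$ is invalid. The estimates $|P|\ge cR$ and $|Q|\le CFR$ bound $|P/Q|$ from below, not above. Moreover $Q(x,R)=FR(x-x_e)$ vanishes at $x=x_e$, so $\int_{\Gamma_2}(P/Q)\,dx$ has a non-integrable pole there.

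Finally, Step~4 only shows that $\delta_1+\delta_3=0$, which is just $f_\lambda=1$, has a unique root. It says nothing about an actual connection.

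\textbf{Carrying out your fallback.} The rescaling is the right tool, but it refutes the formula; the drift you meant to check is not lower order, it is the whole answer. With $Y=y/F$ and $s=Ft$ the system becomes
$x'=-xY+\mathcal{O}(1/F)$, $Y'=Y(x-x_e)$.
The axis $Y=0$ is invariant and normally hyperbolic for $x\neq x_e$: repelling at $x_1$, attracting at $x_0$. The limit system has the first integral $Y+x-x_e\ln x$.

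By Fenichel theory, $W^u(x_1,0)$ and $W^s(x_0,0)$ are $\mathcal{O}(1/F)$-close to the limit fibres $Y=(x_1-x)-x_e\ln(x_1/x)$ and $Y=(x_0-x)-x_e\ln(x_0/x)$. These coincide if and only if
\[
x_e=\frac{x_1-x_0}{\ln x_1-\ln x_0},
\]
the logarithmic mean. Since the landing point of $W^u(x_1,0)$ moves monotonically with $x_e$, one gets $x_e^{h}(F)\to(x_1-x_0)/\ln(x_1/x_0)$.

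This value lies strictly between $\sqrt{x_0x_1}$ and $\frac{x_0+x_1}{2}$, hence strictly above $\frac{2x_0x_1}{x_0+x_1}$. For $x_0=1$, $x_1=3$ it is $\approx1.82$. At $x_e=1.5$ the unstable separatrix of $(x_1,0)$ lands near $x\approx0.61<x_0$, i.e.\ in collapse.

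\textbf{A cross-check with the PWL model.} The same scaling applied to the PWL model gives straight separatrices of slopes $-(x_1-x_e)/x_1$ and $(x_e-x_0)/x_0$. They meet on $\Sigma$ if and only if $x_e=\sqrt{x_0x_1}$, which is exactly the asymptote of the paper's explicit $F^h(x_e)$. The contour computation, by contrast, only uses the field on the lines $x=x_0$ and $x=x_1$, where the smooth and PWL fields coincide. It would therefore predict $\frac{2x_0x_1}{x_0+x_1}$ for both models, and so it cannot be correct.

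Hence no refinement of your steps can prove the statement in its intended sense, namely the $F\to\infty$ limit of $x_e^{h}(F)$ used for the optimal lower bound. In that sense the claimed value is not the right one. What survives is only the formal equivalence $\delta_1+\delta_3=0\iff f_\lambda=1$.
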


\section{Cyclicity of a hyperbolic heteroclinic connection between linear saddles with an invariant line}\label{sec:cyclicityheterocline}

To determine the cyclicity of the heteroclinic connection, we study the number of small zeros of the displacement map $\Delta(\rho)$ as $\rho$ goes to $0$.  
This map is constructed in a neighbourhood of the heteroclinic connection, and each small zero of $\Delta(\rho)$ corresponds to a limit cycle bifurcating from the connection.

In this section, we focus on a particular case where both subsystems $Z_{1}$ and $Z_{2}$ are linear vector fields, and the heteroclinic connection is hyperbolic.  
These facts considerably simplify the analysis: the phase portraits and the level curves of the corresponding first integrals are known explicitly, allowing the semi-return maps to be computed in closed form.

If the connection were non-hyperbolic, the displacement map would be much harder to analyse.  
Fortunately, in the piecewise linear system \eqref{eq:pwlinear}, by Corollary~\ref{cor:unstableconnection}, only unstable hyperbolic heteroclinic connections can occur.

\medskip

Our approach follows the standard procedure:  
\begin{enumerate}[(i)]
    \item We compute the \emph{semi-return maps} on each side of the discontinuity line $\Sigma$.
    \item We study the \emph{displacement map} $\Delta(\rho)$, whose zeros correspond to limit cycles.
\end{enumerate}

For convenience, we perform a translation so that the heteroclinic connection, which originally occurs at $x = x_{e}$, now lies at the origin of coordinates.  
Let $I_{+}$ and $I_{-}$ denote the first integrals of the subsystems $\Sigma^{+}$ and $\Sigma^{-}$, respectively.  
The parameter value corresponding to the heteroclinic bifurcation is
$$
F = F_{h}(x_{e}) := \frac{-x_{0} - x_{1} + 2x_{e}}{x_{0}x_{1} - x_{e}^{2}},
$$
and the height of the heteroclinic connection is given by
$$
y_{h} := -\frac{2\,(x_{1}-x_{e})\left[\left(x_{1}-\tfrac{x_{e}}{2}\right)x_{0}-\tfrac{x_{1}x_{e}}{2}\right](x_{0}-x_{e})}
{x_{1}x_{0}\,\big(x_{0}x_{1}-x_{e}^{2}\big)} > 0.
$$

On the upper side of the discontinuity line, the return condition requires that the orbit reenters the section $\Sigma$ at the same energy level of $I_{+}$.  
This leads to
\begin{equation}    \label{eq:S1}
    S_1 := I_{+}(x=0, y=\rho) - I_{+}(x=0, y=y_{h}-r_{1}),
\end{equation}
where $r_{1}$ denotes the distance between the intersection point and the heteroclinic connection height after flowing along $\Sigma^{+}$.

Analogously, on the lower side of the discontinuity line, we impose the same condition for $I_{-}$:
\begin{equation}    \label{eq:S2}
    S_2 := I_{-}(x=0, y=\rho) - I_{-}(x=0, y=y_{h}-r_{2}),
\end{equation}
where $r_{2}$ denotes the ordinate at which the flow has to start to finish at $\rho$ flowing along $\Sigma^{-}$.

The idea is to study the equations $S_{1} = 0$ and $S_{2} = 0$, and to determine the functions 
$r_{1}(\rho)$ and $r_{2}(\rho)$ such that 
$S_{1}(\rho, r_{1}(\rho)) = 0$ and $S_{2}(\rho, r_{2}(\rho)) = 0$, respectively. 
The displacement map is then defined as
\[
\Delta(\rho) = r_{1}(\rho) - r_{2}(\rho),
\]
which is constructed from the two semi-return maps.

\begin{figure}[h!]
    \centering
    \begin{overpic}[width=0.6\textwidth]{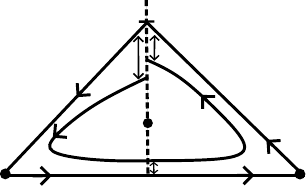}
        \put(53,5){$\rho$}
        \put(0,-3){$x_{0}$}
        \put(96,-3){$x_{1}$}
        \put(49,60){$\Sigma$}
        \put(53,53){$y_{h}$}
        \put(52,44){$r_{1}$}
        \put(40,40){$r_{2}$}
    \end{overpic}
    \caption{Return map near the heteroclinic connection. The limit cycle occurs when $r_{1}(\rho) = r_{2}(\rho)$.}
\end{figure}

To study the cyclicity, we focus on the region of parameter space where the heteroclinic connection exists, namely
$$
\sqrt{x_{0}x_{1}} < x_{e} < \frac{x_{0}+x_{1}}{2} .
$$
In this range, the following inequalities hold:
$$
(x_{0}x_{1} - x_{e}^{2}) < 0, \qquad 
\left[\left(x_{1}-\tfrac{x_{e}}{2}\right)x_{0} - \tfrac{x_{1}x_{e}}{2}\right] < 0,
$$
which ensure that the connection is hyperbolic and, as previously noted, unstable. These inequalities are useful for the study of the sign of the coefficient in the semi-return maps.

\subsection{Semi-return maps}

To construct the return map, we analyse separately the semi-return maps 
associated with the subsystems $\Sigma^{+}$ and $\Sigma^{-}$.
For each side, we obtain an implicit relation between the local coordinates
$(\rho, r_{i})$, for $i\in{1,2}$, which will then be used to compute the displacement map.

\subsubsection{Map $S_{1}$}

From equation~\eqref{eq:S1}, we obtain
\begin{equation*}
    \frac{(A_{1,1}\rho - A_{1,0})B_{1}}{C_{1}} 
    = \frac{r_{1}D_{1}(E_{0,1} - E_{1,1}r_{1})^{\upsilon}}{F_{1}},
\end{equation*}
or, equivalently,
\begin{equation}\label{eq:retornheteroclina1}
    (A_{1,1}\rho - A_{1,0})z_{1} 
    = -\,r_{1}D_{1}(E_{0,1} - E_{1,1}r_{1})^{\upsilon},
\end{equation}
where the constants are chosen to be positive.

\medskip
\noindent
The exponent $\upsilon$ is given by
\[
\upsilon := 
-\frac{(x_{1} - x_{0})(x_{0}x_{1} - x_{e}^{2})}
{(x_{0}+x_{1}-2x_{e})(x_{1}-x_{e})x_{0}} < 0,
\]
and we introduce the auxiliary variable $z_{1} = \rho^{\upsilon}$.

We seek $r_{1}(\rho, z_{1})$ as a power series in $\rho$ and $z_{1}$,
\[
r_{1}(\rho, z_{1}) = \sum_{i,j \ge 0} a_{i,j} \rho^{i} z_{1}^{j},
\]
and determine the coefficients recursively from \eqref{eq:retornheteroclina1}.
At the first orders we obtain
\[
\begin{aligned}
a_{10} = 0, \quad 
a_{01} = \frac{A_{1,0}}{D_{1}E_{0,1}^{\upsilon}},\quad a_{20} = 0, \quad 
a_{11} = -\frac{A_{1,1}}{D_{1}E_{0,1}^{\upsilon}},\quad a_{02} = \frac{A_{1,0}^{2}\upsilon}{D_{1}^{2}E_{0,1}^{2\upsilon+1}}.
\end{aligned}
\]

Substituting these coefficients, the semi-return map on the $\Sigma^{+}$ side becomes
\[
S_{1}(\rho) =
\frac{\big(-D_{1}E_{0,1}^{-\upsilon}(A_{1,1}\rho - A_{1,0})
+ A_{1,0}^{2}E_{0,1}^{-1-2\upsilon}\upsilon\, \rho^{\upsilon}\big)\, \rho^{\upsilon}}
{D_{1}^{2}}+h.o.t.
\]

\subsubsection{Map $S_{2}$}

Analogously, from equation~\eqref{eq:S2} we have
\begin{equation*}
    \frac{(A_{2,1}\rho - A_{2,0})B_{2}}{C_{2}}
    = \frac{r_{2}D_{2}(E_{0,2}-E_{1,2}r_{2})^{\beta}}{F_{2}},
\end{equation*}
which can be rewritten as
\begin{equation*}
    (A_{2,1}\rho - A_{2,0})B_{2} 
    = r_{2}D_{2}(E_{0,2}-r_{2})^{\beta}+h.o.t.
\end{equation*}

The exponent $\beta$ is related to $\upsilon$ through
\[
\beta := \upsilon \frac{x_{0}}{x_{1}}\frac{x_{1}-x_{e}}{x_{e}-x_{0}}<0,
\]
and we define $z_{2} = \rho^{\upsilon\,\frac{x_{0}}{x_{1}}\frac{x_{1}-x_{e}}{x_{e}-x_{0}}}$.

Assuming the expansion
\[
r_{2}(\rho, z_{2}) = \sum_{i,j \ge 0} b_{i,j} \rho^{i} z_{2}^{j},
\]
we find
\[
\begin{aligned}
b_{10} = 0, \quad b_{01} = \frac{A_{2,0}}{D_{2}E_{0,2}^{\beta}}, \quad b_{20} = 0, \quad b_{11} = -\frac{A_{2,1}}{D_{2}E_{0,2}^{\beta}},\quad
b_{02} = \frac{A_{2,0}^{2}\beta}{D_{2}^{2}E_{0,2}^{2\beta+1}}.
\end{aligned}
\]

Hence, the semi-return map on the $\Sigma^{-}$ side is
\[
S_{2}(\rho) =
\frac{\big(-D_{2}E_{0,2}^{-\beta}(A_{2,1}\rho - A_{2,0})
+ A_{2,0}^{2}E_{0,2}^{-1-2\beta}\beta\, \rho^{\beta}\big)\, \rho^{\beta}}
{D_{2}^{2}}+h.o.t.
\]

\subsection{Displacement map}

The relation between the exponents is
\[
\beta = \upsilon \frac{x_{0}}{x_{1}}\frac{x_{1}-x_{e}}{x_{e}-x_{0}},
\qquad
\frac{x_{0}}{x_{1}}\frac{x_{1}-x_{e}}{x_{e}-x_{0}} < 1
\quad \Longrightarrow \quad
\beta < \upsilon < 0.
\]
Hence, both $\upsilon$ and $\beta$ are negative in the parameter region where the heteroclinic connection and its bifurcation occur.

\medskip
The displacement map is defined as
\begin{align*}
\Delta(\rho) &= r_{2}(\rho) - r_{1}(\rho) \\[3pt]
&= 
\left(
-\frac{E_{0,1}^{-\upsilon} A_{1,1} \rho^{\upsilon}}{D_{1}}
+ \frac{E_{0,2}^{-\beta} A_{2,1} \rho^{\beta}}{D_{2}}
\right) \rho
+ \frac{\upsilon A_{1,0}^{2}E_{0,1}^{-1-2\upsilon} \rho^{2\upsilon}
+ \rho^{\upsilon} D_{1} E_{0,1}^{-\upsilon} A_{1,0}}{D_{1}^{2}}\\
&\quad
- \frac{A_{2,0}\left( A_{2,0}\beta \rho^{2\beta} E_{0,2}^{-1-2\beta}
+ D_{2} \rho^{\beta} E_{0,2}^{-\beta} \right)}{D_{2}^{2}}
+ h.o.t.
\end{align*}

\medskip
Since $\beta<0$, we introduce the new variable
$$
R=\rho^{\beta},
$$
so that $R\to 0^{+}$ as $\rho\to 0^{+}$.  
This change of variables is smooth and one-to-one on $(0,\varepsilon)$, for any 
$0<\varepsilon\le \varepsilon_{0}$, and therefore zeros of $\Delta(\rho)$ correspond
exactly to zeros of $\Delta(R)$.

Under this transformation, the powers of $\rho$ satisfy
$$
\rho^{\beta}=R, \qquad
\rho^{\upsilon}=R^{f_\lambda}, \qquad
f_\lambda := \frac{\upsilon}{\beta} > 1.
$$

Substituting these relations into $\Delta(\rho)$ gives the following expansion in 
the variable $R$:
$$
\Delta(R)
=
\frac{\upsilon A_{1,0}^{2} E_{0,1}^{-1-2\upsilon}\, R^{2f_\lambda}
+ D_{1}E_{0,1}^{-\upsilon}A_{1,0}\, R^{f_\lambda}}{D_{1}^{2}}
-
\frac{A_{2,0}\left(
A_{2,0}\beta\,E_{0,2}^{-1-2\beta}R^{2}
+D_{2}E_{0,2}^{-\beta}R\right)}{D_{2}^{2}}
+ \text{h.o.t.}
$$

Collecting the dominant contributions yields
$$
\Delta(R)
=
R\left(
-\,\frac{A_{2,0}E_{0,2}^{-\beta}}{D_{2}}
+
R^{f_\lambda-1}\,\frac{E_{0,1}^{-\upsilon} A_{1,0}}{D_{1}}
\right)
+ o(R).
$$

Since  $\frac{A_{2,0}E_{0,2}^{-\beta}}{D_{2}}\neq0$, at most one small limit cycle can be unfolded. 

\subsection{Stability of the limit cycle}

The zeros of $\Delta(R)$ for small $R$ correspond to limit cycles near the 
heteroclinic connection.  
Since $\Delta(R)=\Pi(R)-R$, where $\Pi(R)$ is the return map, the stability 
of such a limit cycle is determined by
$$
\Pi'(R)
= 1 + \Delta'(R)
= 1
-\,\frac{A_{2,0}E_{0,2}^{-\beta}}{D_{2}}
+ (f_{\lambda}-1)\frac{E_{0,1}^{-\upsilon}A_{1,0}}{D_{1}}\,R^{f_{\lambda}-2}
+ \text{h.o.t.}
< 1,
$$
for $R>0$ sufficiently small.  Can be deduced that $0<\frac{A_{2,0}E_{0,2}^{-\beta}}{D_{2}}<1$ therefore, the limit cycle that bifurcates from the heteroclinic connection 
is \emph{stable}.

\medskip
\noindent
\textbf{Remark.}  
The exponent $f_{\lambda}=\upsilon/\beta$ appearing in the PWL model 
coincides with the exponent of the original model because, in both 
systems it is generated by the same ordered composition of hyperbolic sectors. The global transition map between saddles in the original model only affects the constant multiplying the leading term.

\subsection{Coefficients of the displacement map}
\vspace{0.5cm}

\renewcommand{\arraystretch}{1.5} 
\setlength{\tabcolsep}{3pt} 

\begin{center}
\begin{minipage}{0.48\textwidth}
\centering
\begin{tabular}{|c|p{5.6cm}|}
\hline
\textbf{Coeff.} & \textbf{Expression} \\
\hline\hline
$A_{1,0}$ & $-\frac{(x_{1}-x_{e})(x_{0}-x_{e})(2x_{1}x_{0}-x_{0}x_{e}-x_{1}x_{e})}{x_{1}x_{0}-x_{e}^{2}}$\\
$A_{1,1}$ & $x_{0}x_{1}$\\
$B_{1}$   & $z_{1}$\\
$C_{1}$   & $-\frac{x_{1}x_{0}-x_{e}^{2}}{(2x_{1}x_{0}-x_{0}x_{e}-x_{1}x_{e})(x_{0}-x_{e})}$\\
$D_{1}$   & $x_{0}x_{1}$\\
$E_{0,1}$ & $-\frac{2(x_{1}-x_{e})(x_{0}-x_{e})\left(\left(x_{1}-\frac{x_{e}}{2}\right)x_{0}-\frac{x_{1}x_{e}}{2}\right)}{x_{0}x_{1}(x_{1}x_{0}-x_{e}^{2})}$\\
$E_{1,1}$ & 1\\
$F_{1}$   & $-C_{1}$\\
$\upsilon$  & $\frac{(-x_{1}+x_{0})(x_{1}x_{0}-x_{e}^{2})}{(x_{0}+x_{1}-2x_{e})(x_{1}-x_{e})x_{0}}$\\
$z_{1}$   & $\rho^{\upsilon}$\\
\hline
\end{tabular}\\[0.5em]
\textbf{Semi-return map $S_{1}$}
\end{minipage}
\hfill
\begin{minipage}{0.48\textwidth}
\centering
\begin{tabular}{|c|p{5.6cm}|}
\hline
\textbf{Coeff.} & \textbf{Expression} \\
\hline\hline
$A_{2,0}$ & $-\frac{(x_{1}-x_{e})(x_{0}-x_{e})(2x_{1}x_{0}-x_{0}x_{e}-x_{1}x_{e})}{x_{1}x_{0}-x_{e}^{2}}$\\
$A_{2,1}$ & $x_{0}x_{1}$\\
$B_{2}$   & $z_{2}$\\
$C_{2}$   & $-\frac{x_{1}x_{0}-x_{e}^{2}}{(x_{1}-x_{e})(2x_{1}x_{0}-x_{0}x_{e}-x_{1}x_{e})}$\\
$D_{2}$   & $x_{0}x_{1}$\\
$E_{0,2}$ & $-\frac{2(x_{1}-x_{e})(x_{0}-x_{e})\left(\left(x_{1}-\frac{x_{e}}{2}\right)x_{0}-\frac{x_{1}x_{e}}{2}\right)}{x_{0}x_{1}(x_{1}x_{0}-x_{e}^{2})}$\\
$E_{1,2}$ & 1\\
$F_{2}$   & $-C_2$\\
$\beta$   & $\upsilon \frac{x_{0}}{x_{1}}\frac{x_{1}-x_{e}}{x_{e}-x_{0}}$\\
$z_{2}$   & $\rho^{\beta}$\\
\hline
\end{tabular}\\[0.5em]
\textbf{Semi-return map $S_{2}$}
\end{minipage}
\end{center}

\vspace{1cm}

\end{document}